\documentclass[12pt]{article}

\usepackage{amsfonts,mathrsfs}
\usepackage{amssymb,amsmath,amsbsy,amsthm}
\usepackage{dsfont}
\usepackage{blkarray}
\usepackage{tikz}
\usepackage{tkz-euclide}
\usepackage{tikz-cd}
\usetikzlibrary{patterns}
\usepackage{ae}
\usepackage{bm}
\usepackage{graphicx}
\usepackage{float}
\usepackage{cite}
 \usepackage{indentfirst}
\usepackage{lineno}
\usepackage{titlesec}
\usepackage{enumitem}
\usepackage{color}
\usepackage{aliascnt}
\usepackage{varioref}
\usepackage{hyperref}
\hypersetup{colorlinks=true,linkcolor=cyan,anchorcolor=cyan,citecolor=red,CJKbookmarks=True,
,urlcolor=cyan}
\pdfstringdefDisableCommands{%
    \renewcommand*{\bm}[1]{#1}%
    }
\usepackage{cleveref}

\numberwithin{equation}{section}
\def\rank{\mbox{\rm rank}}

\def\gcd{\mbox{\rm gcd}}

\def\int{\mbox{\rm int}}

\def\max{\mbox{\rm max}}
\def\diag{\mbox{\rm diag}}
\def\det{\mbox{\rm det}}
\def\And{\mbox{\rm ~and~}}

\def\I{\mbox{\rm (\hspace{0.2mm}I\hspace{0.2mm})}\,}

\def\Col{\mbox{\rm Col\,}}

\def\({\mbox{\rm (}}\def\){\mbox{\rm )}}

\makeatletter

\newcommand{\Rmnum}[1]{\expandafter\@slowromancap\romannumeral #1@}
\makeatother
\newtheorem{theorem}{Theorem}[section]
\newaliascnt{lemma}{theorem}
\newtheorem{lemma}[lemma]{Lemma}
\aliascntresetthe{lemma}

\newaliascnt{proposition}{theorem}
\newtheorem{proposition}[proposition]{Proposition}
\aliascntresetthe{proposition}

\newaliascnt{fact}{theorem}

\aliascntresetthe{fact}

\newaliascnt{definition}{theorem}

\aliascntresetthe{definition}

\newaliascnt{conjecture}{theorem}

\aliascntresetthe{conjecture}

\newaliascnt{corollary}{theorem}
\newtheorem{corollary}[corollary]{Corollary}
\aliascntresetthe{corollary}

\newaliascnt{claim}{theorem}

\aliascntresetthe{claim}

\newaliascnt{problem}{theorem}

\aliascntresetthe{problem}

\newaliascnt{question}{theorem}

\aliascntresetthe{question}

\newaliascnt{remark}{theorem}

\aliascntresetthe{remark}

\newaliascnt{example}{theorem}

\aliascntresetthe{example}

\newaliascnt{notation}{theorem}

\aliascntresetthe{notation}

\linespread{1.0}

\setlength{\parindent}{2em}
\setlength{\textheight}{240mm}

\addtolength{\hoffset}{-15mm}
\addtolength{\voffset}{-20mm}
\addtolength{\textwidth}{30mm}
\begin{document}

\begin{center}
{\Large\bf
Characteristic quasi-polynomials of truncated arrangements}\\[7pt]
\end{center}
\vskip3mm

\begin{center}
Ying Cao$^{1}$\quad and\quad Houshan Fu$^{2,*}$\\[8pt]
 $^{1,2}$School of Mathematics and Information Science, Guangzhou University\\
Guangzhou 510006, Guangdong, P. R. China\\[12pt]

$^{1}$Department of Mathematics, Hong Kong University of Science and Technology\\
Clear Water Bay, Hong Kong, P. R. China\\[15pt]
 
$^*$Correspondence to be sent to: fuhoushan@gzhu.edu.cn \\
E-mail: $^1$ycaobf@connect.ust.hk\\[15pt]
\end{center}
\vskip 3mm
\begin{abstract}
Given an (affine) integral arrangement $\mathcal{A}$ in $\mathbb{R}^n$, the reduction of $\mathcal{A}$ modulo an arbitrary positive integer $q$ naturally yields an arrangement $\mathcal{A}_q$ in $\mathbb{Z}_q^n$. Our primary objective is to study the combinatorial aspects of the restriction $\mathcal{A}^{(B,\bm b)}$ to the solution space of $B\bm x=\bm b$, and its reduction $\mathcal{A}_q^{(B,\bm b)}$ modulo $q$. This work generalizes the earlier results of Kamiya, Takemura and Terao, as well as Chen and Wang. 

The purpose of this paper is threefold as follows. Firstly, we derive an explicit counting formula for the cardinality of the complement $M\big(\mathcal{A}_q^{(B,\bm b)}\big)$ of $\mathcal{A}_q^{(B,\bm b)}$; and prove that for all positive integers $q>q_0$, this cardinality coincides with a quasi-polynomial $\chi^{\text{quasi}}\big(\mathcal{A}^{(B,\bm b)},q\big)$ in $q$ with a period $\rho_C$. Secondly, we weaken Chen and Wang's original hypothesis $a \mid b$ to a strictly more general condition $\gcd(a,\rho_C)\mid \gcd(b,\rho_C)$,  and introduce the concept of  combinatorial equivalence for positive integers. Within this framework, we establish three unified comparison relations: between the unsigned coefficients of $\chi^{\text{quasi}}\big(\mathcal{A}^{(B,\bm b)},a\big)$ and $\chi^{\text{quasi}}\big(\mathcal{A}^{(B,\bm b)},b\big)$; between the unsigned coefficients of distinct constituents of $\chi^{\text{quasi}}\big(\mathcal{A}^{(B,\bm b)},q\big)$; and between the cardinalities of $M\big(\mathcal{A}_q^{(B,\bm b)}\big)$ and $M\big(\mathcal{A}_{pq}^{(B,\bm b)}\big)$. Thirdly, using our method, we revisit the enumerative aspects of group colorings and nowhere-zero nonhomogeneous form flows from the early work of  Forge, Zaslavsky and Kochol.
\vspace{1ex}\\
\noindent{\bf Keywords:}  Characteristic quasi-polynomial, hyperplane arrangement, truncated arrangement, elementary divisor, group coloring, nowhere-zero flow \vspace{1ex}\\
{\bf Mathematics Subject Classifications:} 52C35, 05A15
\end{abstract}
\section{Introduction}\label{Sec1}
A {\bf subspace arrangement} is a finite collection $\mathcal{A}=\{H_1,H_2,\ldots,H_m\}$  of affine subspaces of a vector space $V$ over  a field $\mathbb{F}$. If each $H_i$ is a hyperplane (codimension-one subspace) of $V$, $\mathcal{A}$ is called a {\bf hyperplane arrangement}. The {\bf complement} $M(\mathcal{A})$ of $\mathcal{A}$ is defined as
\[
M(\mathcal{A}):=V-\bigcup_{H\in\mathcal{A}}H.
\]
A concept very closely related to $M(\mathcal{A})$ is the {\bf characteristic polynomial} of $\mathcal{A}$, defined by 
\[
\chi(\mathcal{A},t):=\sum_{\mathcal{B}\subseteq\mathcal{A},\,\bigcap_{H\in\mathcal{B}}H\ne\emptyset}(-1)^{|\mathcal{B}|}t^{\dim(\bigcap_{H\in\mathcal{B}}H)}.
\]
Roughly speaking, $\chi(\mathcal{A},t)$ measures the ``size" of $M(\mathcal{A})$ (see \cite{ER1998}). Details regarding subspace arrangements and characteristic polynomials can be found in \cite{Athanasiadis1996,BE1997,Stanley2007}.

Our central object of the study is integral arrangements. Let $A\in\mathcal{M}_{m\times n}(\mathbb{Z})$ be an $m\times n$ matrix with integral entries, and let $\bm a\in\mathbb{Z}^m$ be an integral vector. The matrix $[A,\bm a]$ defines a hyperplane arrangement $\mathcal{A}:=\mathcal{A}(A,\bm a)$ consisting of $m$ hyperplanes $H_i$ in $\mathbb{R}^n$, referred to as an {\bf integral arrangement}.  Specifically, each hyperplane $H_i$ in $\mathcal{A}$ is given by
\[
H_i:a_{i1}x_1+a_{i2}x_2+\cdots+a_{in}x_n=a_i;\quad a_{ij},a_i\in\mathbb{Z},\;1\le i\le m,\;1\le j\le n.
\]
We call the augmented matrix $[A,\bm a]$ the {\bf defining matrix} of $\mathcal{A}$.

Let $q\in\mathbb{Z}_{>0}$ and denote by $\mathbb{Z}_q:=\mathbb{Z}/q\mathbb{Z}$ the additive abelian group of integers modulo $q$. For any $a\in\mathbb{Z}$, the {\bf $q$-reduction} of $a$ is denoted by $[a]_q:=a+q\mathbb{Z}\in\mathbb{Z}_q$. In the same way, for an integral matrix or vector $A'$, denote by $[A']_q$ the entry-wise $q$-reduction of $A'$. Then the $q$-reductions of $A$ and $\bm a$ naturally yield an arrangement 
\[
\mathcal{A}_q=\mathcal{A}_q(A,\bm a):=\{H_{i,q}\mid i=1,2\ldots,m\},
\]
of $m$ ``hyperplanes" $H_{i,q}$ in $\mathbb{Z}_q^n$, where
\[
H_{i,q}:=\big\{\bm x\in\mathbb{Z}_q^n\mid [\bm\alpha^T_i]_q\bm x=[a_i]_q\big\} \quad\text{with}\quad\bm\alpha^T_i=(a_{i1},\ldots,a_{in}).
\]
While  $H_{i,q}$ is not necessarily a hyperplane, we follow a common terminological convention by referring to $H_{i,q}$ as a hyperplane. 

Originated from Crapo and Rota's work \cite{CR1970}, Athanasiadis in \cite{Athanasiadis1996} systematically developed a finite field method to compute the characteristic polynomial of an integral arrangement. This method reduces the computation of the characteristic polynomial to a simple counting problem in a vector space over a finite field, and was also independently discovered by  Bj\"{o}rner and Ekedahl \cite{BE1997}. It is natural to consider such problems on the more general abelian group $\mathbb{Z}_q$, where $q$ is not necessarily power of a prime. Later, Athanasiadis generalized the previous work in \cite[Theorem 2.1]{Athanasiadis1999}: the characteristic polynomial of $\mathcal{A}$ can be computed by enumerating points in the complement of $\mathcal{A}_q$ for large enough integers $q$ that are relatively prime to a constant depending only on $\mathcal{A}$. The extension to truncated arrangements will be presented in \autoref{Char-Quasi}.

Recently, Kamiya, Takemura and Terao \cite{KTT2008,KTT2011} extended Athanasiadis's idea to all positive integers $q$. They showed that there exist non-negative integers $\rho_A$ and $q_0$, both depending only on $[A,\bm a]$, such that the cardinality of the complement of $\mathcal{A}_q$ is a quasi-polynomial $\chi^{\text{quasi}}(\mathcal{A},q)$ in $q$ with a period $\rho_A$ for all integers $q>q_0$.  This quasi-polynomial is named the {\bf characteristic quasi-polynomial} in \cite{KTT2008}, and the period is called the {\bf lcm period} in \cite{KTT2010}. However, the lcm period is not necessarily the minimum period. This relevant topic was well-studied by  Higashitani, Tran and Yoshinaga  \cite{HTY2023}.

In fact, characteristic quasi-polynomials encode significantly more information than characteristic polynomials. For instance, each constituent of the characteristic quasi-polynomial $\chi^{\text{quasi}}(\mathcal{A},q)$ has a different combinatorial interpretation associated to the toric arrangement, as shown in \cite{LTY2021,TY2019}. In particular, the first constituent correspond precisely to the characteristic polynomial $\chi(\mathcal{A},t)$ in \cite{KTT2008,KTT2011}. Chen and Wang  investigated the arithmetic properties of  $\chi^{\text{quasi}}(\mathcal{A},q)$ in \cite{CW2012}, with a particular focus on its coefficients and those of its constituents. To better address this topic, they primarily concentrated on the so-called truncated integral arrangement in the central case.

Let $B\in\mathcal{M}_{l\times n}(\mathbb{Z})$ and $\bm b\in\mathbb{Z}^k$. Denote by $V(B,\bm b)$ the solution space of the linear system $B\bm x=\bm b$, i.e.,
\[
V(B,\bm b):=\big\{\bm x\in\mathbb{R}^n\mid B\bm x=\bm b\big\}.
\]
The restriction of $\mathcal{A}=\mathcal{A}(A,\bm a)$ to $V(B,\bm b)$ yields a subspace arrangement $\mathcal{A}^{(B,\bm b)}$ in $V(B,\bm b)$ given by
\[
\mathcal{A}^{(B,\bm b)}:=\big\{H_1^{(B,\bm b)}, H_2^{(B,\bm b)},\ldots, H_m^{(B,\bm b)}\big\},
\]
where each affine subspace $H_i^{(B,\bm b)}$ is the intersection of $H_i$ and $V(B,\bm b)$.  The subspace arrangement $\mathcal{A}^{(B,\bm b)}$ is called the {\bf truncated arrangement} or {\bf truncation} of $\mathcal{A}$ by $[B,\bm b]$. Let 
\[
V_q(B,\bm b):=\big\{\bm x\in\mathbb{Z}_q^n\mid [B]_q\bm x=[\bm b]_q\big\}.
\]
The truncation $\mathcal{A}^{(B,\bm b)}$ automatically induces an arrangement $\mathcal{A}_q^{(B,\bm b)}$ in $V_q(B,\bm b)$, defined by
\[
\mathcal{A}_q^{(B,\bm b)}:=\Big\{H_{1,q}^{(B,\bm b)}, H_{2,q}^{(B,\bm b)},\ldots,H_{m,q}^{(B,\bm b)}\Big\}, 
\]
where $H_{i,q}^{(B,\bm b)}$ denotes the intersection $H_{i,q}\cap V_q(B,\bm b)$ and coincides with the $q$-reduction of $H_i^{(B,\bm b)}$. We need to be especially careful that $V_q(B,\bm b)$ is not necessarily nonempty. If $l=0$, then $\mathcal{A}^{(B,\bm b)}=\mathcal{A}$ and $\mathcal{A}_q^{(B,\bm b)}=\mathcal{A}_q$. 

Chen and Wang \cite{CW2012} focused on the counting function of the complement of $\mathcal{A}_q^{(B,\bm b)}$ in the {\bf central case} where $\bm a$ and $\bm b$ are the zero vectors. In this setting, they proved in \cite[Theorem 1.1, Theorem 2.3]{CW2012} that the cardinality of the complement $M\big(\mathcal{A}_q^{(B,\bm 0)}\big)$ is a quasi-polynomial $\chi^{\text{quasi}}\big(\mathcal{A}^{(B,\bm 0)},q\big)$ in all positive integers $q$, which satisfies the gcd property and extends the main results in \cite{KTT2008}. In \cite[Theorem 1.2]{CW2012}, they further showed that if positive integers $a$ divides $b$, then the unsigned coefficients of the quasi-polynomials $\chi^{\text{quasi}}\big(\mathcal{A}^{(B,\bm 0)},a\big)$ and $\chi^{\text{quasi}}\big(\mathcal{A}^{(B,\bm 0)},b\big)$ have a unified comparison.

Motivated by the preceding work, our primary objective is to generalize their results to the {\bf non-central case}, where $\bm a$ and $\bm b$ are arbitrary integral vectors. Next, we will briefly introduce our main results. The relevant precise definitions and detailed notations will be explained in later sections.

Using the inclusion-exclusion principle, we first present an explicit expression for the cardinality of the complement of $\mathcal{A}_q^{(B,\bm b)}$, as follows:
\[
\#M\big(\mathcal{A}_q^{(B,\bm b)}\big)=\sum_{J\subseteq[m]}(-1)^{\#J}\tilde{d}_J(q)q^{n-r(J)}.
\]
By the elementary divisors method, we then derive that there exist non-negative integers $\rho_C$ and $q_0$, both depending only on $\mathcal{A}^{(B,\bm b)}$, such that for all integers $q>q_0$, the cardinality of $M\big(\mathcal{A}_q^{(B,\bm b)}\big)$ is a quasi-polynomial in $q$ with $\rho_C$ as a period, and its expression is
\[
\chi^{\text{quasi}}\big(\mathcal{A}^{(B,\bm b)},q\big)=\sum_{J:\, J\subseteq[m],\,\bar{r}(J)=r(J)}(-1)^{\#J}\tilde{d}_J(q)q^{n-r(J)},
\]
where $\tilde{d}_J$ is a periodic function. We call it the {\bf characteristic quasi-polynomial}. Similarly, every constituent $f_a\big(\mathcal{A}^{(B,\bm b)},t\big)$ ($a=1,\ldots,\rho_C$) of $\chi^{\text{quasi}}\big(\mathcal{A}^{(B,\bm b)},q\big)$ is a polynomial of the form
\[
f_a\big(\mathcal{A}^{(B,\bm b)},t\big)=\sum_{J:\, J\subseteq[m],\,\bar{r}(J)=r(J)}(-1)^{\#J}\tilde{d}_J(a)t^{n-r(J)}.
\]
As with \cite{CW2012,KTT2008,KTT2011}, we observe that $f_a\big(\mathcal{A}^{(B,\bm b)},t\big)=f_b\big(\mathcal{A}^{(B,\bm b)},t\big)$ when ${\rm gcd}(a,\rho_C)={\rm gcd}(b,\rho_C)$, and the first constituent agrees with the characteristic polynomial of $\mathcal{A}^{(B,\bm b)}$. 

The other core focus of this paper is the interrelationship of the unsigned coefficients of characteristic quasi-polynomials and those of its constituents. Assume ${\rm rank}(B)=r$ and ${\rm rank}\begin{bmatrix}B\\A\end{bmatrix}=s$. From the expressions for $\chi^{\text{quasi}}\big(\mathcal{A}^{(B,\bm b)},q\big)$ and $f_a\big(\mathcal{A}^{(B,\bm b)},t\big)$, we can write them respectively as
\[
\chi^{\text{quasi}}\big(\mathcal{A}^{(B,\bm b)},q\big)=\sum_{j=r}^s(-1)^{j-r}\beta_j(q)q^{n-j}
\]
and
\[
f_a\big(\mathcal{A}^{(B,\bm b)},t\big)=\sum_{j=r}^s(-1)^{j-r}\gamma_j(a)t^{n-j}.
\]
To derive our desired results, we weaken the original hypothesis $a \mid b$ in \cite{CW2012} to a strictly more general condition $\gcd(a,\rho_C)\mid \gcd(b,\rho_C)$,  and further  introduce the concept of  combinatorial equivalence for positive integers. Within this framework, we establish three uniform comparisons as follows:
\begin{itemize}
\item For any integers $a,b>q_0$, if ${\rm gcd}(a,\rho_C)\mid{\rm gcd}(b,\rho_C)$ and $a$ is combinatorially equivalent to $b$ with respect to $\mathcal{A}^{(B,\bm b)}$, then 
\[
0\le \beta_j(a)\le\beta_j(b),\quad\forall\, r\le j\le s.
\]
\item For any integers $1\le a,b\le\rho_C$, if ${\rm gcd}(a,\rho_C)\mid{\rm gcd}(b,\rho_C)$ and  ${\rm gcd}(a,\rho_C)$ is combinatorially equivalent to ${\rm gcd}(b,\rho_C)$ with respect to $\mathcal{A}^{(B,\bm b)}$, then 
\[
0\le \gamma_j(a)\le\gamma_j(b),\quad\forall\, r\le j\le s.
\]
\item For any $p,q\in\mathbb{Z}_{>0}$, if  ${\rm gcd}(p,d_i){\rm gcd}(q,d_i)\mid d_i$ for every invariant factor $d_i$ of $B$ and the system $B\bm x=\bm b \pmod{pq}$ is solvable, then 
\[
\#M\big(\mathcal{A}_q^{(B,\bm b)}\big)\le \#M\big(\mathcal{A}_{pq}^{(B,\bm b)}\big).
\]
\end{itemize}
Moreover, when $H_1^{(B,\bm b)},\ldots,H_m^{(B,\bm b)}$ are codimension-one subspaces of $V(B,\bm b)$ or $l=0$, the coefficients of $f_a\big(\mathcal{A}^{(B,\bm b)},t\big)$ are nonzero and alternate in sign if $\gcd(a,\rho_C)$ is combinatorially equivalent to $1$ with respect to $\mathcal{A}^{(B,\bm b)}$. The result and the first comparison strengthen Chen and Wang's main results and extend them to the non-central case. As far as we know, the last two comparisons are established for the first time.

A closely related topic is nowhere-zero nonhomogeneous form flows and group colorings, introduced by Jaeger, Linial, Payan, and Tarsi \cite{Jaeger1992}, which generalizes the classical nowhere-zero flows and proper vertex colorings in graph theory. Using our method, we revisit the relevant enumerative aspects in earlier works \cite{FZ2007,FZ2016,FRW2025, Kochol2022, Zaslavsky1995, Zaslavsky2003}.

The remainder of the paper is organized as follows. In \autoref{Sec2}, we mainly recall the definitions and basic properties of quasi-polynomials and elementary divisors, and study the cardinality of the complement $M\big(\mathcal{A}_q^{(B,\bm b)}\big)$ of $\mathcal{A}_q^{(B,\bm b)}$ and its associated properties in the non-central case. \autoref{Sec3} focuses on three comparisons: the unsigned coefficients of $\chi^{\text{quasi}}\big(\mathcal{A}^{(B,\bm b)},a\big)$ and $\chi^{\text{quasi}}\big(\mathcal{A}^{(B,\bm b)},b\big)$; the unsigned coefficients of distinct constituents $f_a\big(\mathcal{A}^{(B,\bm b)}, t\big)$; and the cardinalities of $M\big(\mathcal{A}_q^{(B,\bm b)}\big)$ and $M\big(\mathcal{A}_{pq}^{(B,\bm b)}\big)$. \autoref{Sec4} is devoted to discussing the applications of characteristic quasi-polynomials in graph coloring and flow problems.
\section{Characteristic quasi-polynomials}\label{Sec2}
In this section, we start by recalling the definitions and basic properties of quasi-polynomials and elementary divisors, and then focus on studying the cardinality of the complement of $\mathcal{A}_q^{(B,\bm b)}$ in the non-central case and its associated properties.
\subsection{Preliminaries}\label{Sec2-1}
For the reader's convenience, we first survey the relevant definitions and standard facts concerning quasi-polynomials and elementary divisors, which will be used later.

Let us review the definition of quasi-polynomials. A function $f:\mathbb{Z}\to\mathbb{C}$ is called a {\bf quasi-polynomial} if there exist $\rho\in\mathbb{Z}_{>0}$ and polynomials $f_1(t),f_2(t),\ldots,f_\rho(t)\in\mathbb{Q}[t]$ such that for any $q\in\mathbb{Z}_{>0}$,
\[
f(q)=f_j(q)\quad\text{if }\;q\equiv j\pmod{\rho}.
\]
The integer $\rho$ (which is not unique) is referred to as a {\bf quasi-period} of $f$, and the polynomial $f_j(t)$ is called the {\bf$j$-constituent} of the quasi-polynomial $f$. Equivalently, $f$ is of the form
\[
f(q)=d_n(q)q^n+d_{n-1}(q)q^{n-1}+\cdots+d_0(q),
\]
where each $d_j\colon\mathbb{Z}\to\mathbb{Q}$ is a periodic function with integral period $\tau_j$, that is, $d_j(q)=d_j(q+\tau_j)$ for $\tau_j\in\mathbb{Z}_{>0}$. In this case, the above period $\rho$ can be chosen as $\rho={\rm lcm}(\tau_0,\tau_1,\ldots,\tau_n)$. The smallest such $\rho$ is called the {\bf minimum period} of the quasi-polynomial $f$. Moreover, a quasi-polynomial $f$ with a period $\rho$ is said to have the {\bf gcd property} with respect to $\rho$ if every $j$-constituent $f_j(t)\;(1\leq j\leq\rho)$ depends on $j$ only through ${\rm gcd}(j,\rho)$, that is, 
\[
f_a(t)=f_b(t)\quad\text{if}\quad{\rm gcd}(a,\rho)={\rm gcd}(b,\rho).
\]
Further details on quasi-polynomials can be found in \cite{Stanley2012}.

Next, we collect some foundational aspects of elementary divisor theory. Let $M\in\mathcal{M}_{m\times n}(\mathbb{Z})$ be an integral matrix of rank $r$. From the elementary divisor theorem, $M$ can be transformed to its Smith normal form via elementary row and column operations. Specifically, the {\bf Smith normal form} of $M$ is the unique matrix $D=D(M)$ whose first $r$ diagonal entries are non-zero integers $d_1(M),d_2(M),\ldots,d_r(M)$ and remaining entries are zero, such that there exist unimodular matrices $P\in GL_m(\mathbb{Z})$ and $Q\in GL_n(\mathbb{Z})$ satisfy 
\[
PMQ=D\quad \text{and} \quad 0<d_1(M)\mid d_2(M)\mid \cdots\mid d_{r-1}(M)\mid d_r(M).
\]
Each $d_j(M)$ is called the {\bf$j$-th invariant factor} of $M$. In particular, $d(M):=d_r(M)$ is termed the {\bf maximal invariant factor}. Moreover, every $j$-th invariant factor can be computed by the following formula:
\[
\prod_{i=1}^jd_i(M)={\rm gcd}\big\{\det(M')\mid M'\text{ is a }j\times j\text{ submatrix of }M\big\}.
\]
For more information on the topic, see \cite{Lang2002}. 

Recently, Kamiya, Takemura and Terao investigated the relevant theory for the ring $\mathbb{Z}_q$ ($q\in\mathbb{Z}_{>0}$) in  \cite{KTT2011}. They showed in \cite[Proposition 2.1]{KTT2011} that an arbitrary matrix $M\in\mathcal{M}_{m\times n}(\mathbb{Z}_q)$ is equivalent to a diagonal matrix
\[
\diag\big([d_1]_q,\ldots,[d_s]_q,0,\ldots,0\big),
\]
for some integers $0<d_1\mid d_2\mid\cdots\mid d_s\mid q$ with $d_s<q$, where each $d_i$ is uniquely determined by $M$. More precisely, let $M'\in\mathcal{M}_{m\times n}(\mathbb{Z})$ satisfy $[M']_q=M$. Then
\begin{align}\label{eq:invariant factor}
s={\rm max}\big\{i:q\text{ does not divide } d_i(M')\big\}\quad\And\quad d_i={\rm gcd}\big(q,d_i(M')\big).
\end{align}
Consequently, $[d_i]_q\doteq[d_i(M')]_q$, where $\doteq$ stands for equality up to a unit multiplication in $\mathbb{Z}_q$. Likewise, we call $[d_j]_q$ the {\bf $j$-th invariant factor} of $M$.

Let $M\in\mathcal{M}_{m\times n}(\mathbb{Z})$ and $\bm c\in\mathbb{Z}^m$.  Define $\Col_q(M)$ as the column space generated by the columns of $[M]_q$ over $\mathbb{Z}_q$. Obviously, $\Col_q(M)$ is a submodule of the $\mathbb{Z}_q$-module $\mathbb{Z}_q^m$. Kamiya, Takemura and Terao  \cite{KTT2011} provided an alternative characterization of the solvability of the system $M\bm x=\bm c\pmod q$ associated with the invariant factors of $[M,\bm c]_q$ and $[M,\bm 0]_q$.
\begin{lemma}[\cite{KTT2011}, Lemma 2.3]\label{Solvable}
Let $q\in\mathbb{Z}_{>0}$, $M\in\mathcal{M}_{m\times n}(\mathbb{Z})$ and $\bm c\in\mathbb{Z}^m$. Then $[\bm c]_q\in\Col_q(M)$ if and only if the invariant factors of $[M,\bm c]_q$ are the same as those of $[M,\bm 0]_q$.
\end{lemma}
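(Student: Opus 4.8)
The plan is to exploit the Smith normal form over $\mathbb{Z}_q$ provided by \cite[Proposition 2.1]{KTT2011}, together with the elementary observation that solvability of $M\bm x=\bm c\pmod q$ is precisely the statement $[\bm c]_q\in\Col_q(M)$, and is invariant under the row operations (left multiplication by units of $\mathcal{M}_{m\times m}(\mathbb{Z}_q)$) and column operations on $M$ that realize the diagonalization. First I would fix a unimodular $P\in GL_m(\mathbb{Z}_q)$ and $Q\in GL_n(\mathbb{Z}_q)$ so that $PMQ=\diag([d_1]_q,\ldots,[d_s]_q,0,\ldots,0)=:D$, where $0<d_1\mid\cdots\mid d_s\mid q$, $d_s<q$ are the invariant factors of $[M]_q$. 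Since $P$ is invertible over $\mathbb{Z}_q$, the system $M\bm x=\bm c\pmod q$ is solvable iff $D\bm y=P\bm c\pmod q$ is solvable (with $\bm y=Q^{-1}\bm x$), i.e. iff $[\bm c]_q\in\Col_q(M)$ iff $P[\bm c]_q\in\Col_q(D)$. Writing $P[\bm c]_q=([e_1]_q,\ldots,[e_m]_q)^T$, the column space $\Col_q(D)$ is exactly $\{([d_1]_q z_1,\ldots,[d_s]_q z_s,0,\ldots,0)^T\}$, so solvability holds iff $[e_i]_q\in[d_i]_q\mathbb{Z}_q$ for $1\le i\le s$ and $[e_i]_q=0$ for $i>s$.

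Next I would compare this with the invariant-factor condition. The augmented matrices $[M,\bm c]_q$ and $[M,\bm 0]_q$ are related to $[D,P\bm c]_q$ and $[D,\bm 0]_q$ by the same unimodular row operation $P$ on both, and by the column operation $Q\oplus 1$ on $M$; these operations preserve invariant factors, so the invariant factors of $[M,\bm c]_q$ equal those of $[D,P\bm c]_q$, and those of $[M,\bm 0]_q$ equal those of $[D,\bm 0]_q$, which are just $[d_1]_q,\ldots,[d_s]_q$. Now the key computation: using the gcd-of-minors formula \eqref{eq:invariant factor} applied to a lift of $[D,P\bm c]_q$, I would show that the invariant factors of $[D,P\bm c]_q$ coincide with $d_1,\ldots,d_s$ (reduced mod $q$) exactly when the extra column $P\bm c$ introduces no new minors that lower any partial gcd, which after the diagonal structure of $D$ reduces precisely to the arithmetic conditions $\gcd(q,e_i)$ being a multiple of $\gcd(q,d_i)$ for $i\le s$ — equivalently $[e_i]_q\in[d_i]_q\mathbb{Z}_q$ — together with $[e_i]_q=0$ for $i>s$ (so that no $(s+1)\times(s+1)$ minor of $[D,P\bm c]_q$ survives mod $q$, keeping the number of invariant factors equal to $s$). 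This is the same pair of conditions obtained in the first paragraph, giving the equivalence.

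I expect the main obstacle to be the minors bookkeeping in the last step: one must carefully verify that adjoining the column $P\bm c$ to the diagonal matrix $D$ changes neither the count $s$ of invariant factors nor any partial product $\prod_{i=1}^j d_i$ modulo $q$ if and only if the entrywise divisibility conditions hold, and to do this cleanly it helps to work with an integral lift and track which $j\times j$ submatrices can involve the new column. A slicker alternative, which I would present if space permits, is purely module-theoretic: $[\bm c]_q\in\Col_q(M)$ means the inclusion $\Col_q(M)\hookrightarrow\Col_q([M,\bm c])$ is an equality of submodules of $\mathbb{Z}_q^m$, and two submodules of a finite $\mathbb{Z}_q$-module with the same invariant factors and one contained in the other must be equal (by comparing orders), while conversely equal submodules trivially have equal invariant factors; this avoids minors entirely and is the cleanest route, so I would likely adopt it and relegate the minors argument to a remark.
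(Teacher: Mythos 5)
The paper does not prove this lemma; it is cited verbatim from Kamiya--Takemura--Terao \cite[Lemma~2.3]{KTT2011}, so there is no internal proof to compare against. Your proposal is nonetheless correct, and the module-theoretic argument you give at the end is the one to keep. Concretely: the easy direction is that if $[\bm c]_q\in\Col_q(M)$ then column operations over $\mathbb{Z}_q$ carry $[M,\bm c]_q$ to $[M,\bm 0]_q$, and such operations preserve invariant factors. For the converse, one always has $\Col_q(M)=\Col_q([M,\bm 0])\subseteq\Col_q([M,\bm c])$; writing $[d_1]_q,\ldots,[d_s]_q$ for the common invariant factors, each of these column spaces is carried by some unimodular $P$ onto $\Col_q\big(\diag([d_1]_q,\ldots,[d_s]_q,0,\ldots,0)\big)$, which has exactly $\prod_{i=1}^s(q/d_i)$ elements; equality of cardinalities together with the inclusion gives $\Col_q([M,\bm c])=\Col_q(M)$ and hence $[\bm c]_q\in\Col_q(M)$. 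This avoids any minors bookkeeping. Your first, minors-based paragraph is left as a sketch (``I would show\,\ldots'') and you correctly identify it as the weak point; completing it would amount to re-deriving an interlacing statement for the adjoined column, which is unnecessary once the order-counting argument is in hand. I would lead with the module-theoretic proof and drop or compress the minors route to a one-line remark.
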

To obtain our results, the following theorem of Tompson \cite{Thompson1979} is also required, known as the Interlacing Divisibility Theorem.
\begin{theorem}[\cite{Thompson1979}, Interlacing Divisibility Theorem]\label{IDT}
Let $M\in\mathcal{M}_{m\times n}(\mathbb{Z})$ be an integral matrix of rank $r$ and $N=\begin{bmatrix} M\\\bm c^T\end{bmatrix}$ with $\bm c\in\mathbb{Z}^n$. Then
\[
d_1(N)\mid d_1(M)\mid d_2(N)\mid d_2(M)\mid\cdots\mid d_r(N)\mid d_r(M)\mid d_{r+1}(N),
\]
where $d_{r+1}(N)=0$ if ${\rm rank}(N)=r$. In particular, the maximal invariant factors $d(M)$ and $d(N)$ satisfy the following relation:
\begin{align*}
d(M)\mid d(N)\quad&\text{ if }\quad {\rm rank}(N)={\rm rank}(M)+1,\\
d(N)\mid d(M)\quad&\text{ if }\quad{\rm rank}(N)={\rm rank}(M).
\end{align*}
\end{theorem}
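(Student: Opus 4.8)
The plan is to reduce the interlacing chain to a statement about finitely generated modules over a discrete valuation ring (DVR) and then settle that by counting generators of prime‑power truncations. Since divisibility in $\mathbb{Z}$ is checked prime by prime, and the $p$‑part of $d_i(M)$ is exactly the $i$‑th invariant factor of $M$ over the localization $R=\mathbb{Z}_{(p)}$, it suffices to prove the chain over each such DVR. I record the invariant factors of an $m\times n$ integral matrix as a divisibility chain $d_1\mid d_2\mid\cdots$ padded by zeros to length $n$; with this convention $d_i(M)=d_i(M^T)$, and the $d_i(M)$ are precisely the invariant factors of the $R$‑module $R^n/\mathcal{M}$, where $\mathcal{M}\subseteq R^n$ is the $R$‑span of the rows of $M$.

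The key observation is that replacing $M$ by $N=\begin{bmatrix}M\\ \bm c^T\end{bmatrix}$ replaces $\mathcal{M}$ by $\mathcal{N}=\mathcal{M}+R\bm c$, so that $Q:=R^n/\mathcal{N}$ is the quotient of $P:=R^n/\mathcal{M}$ by the \emph{cyclic} submodule $\mathcal{N}/\mathcal{M}$. Hence it suffices to prove: if $R$ is a DVR with maximal ideal $(p)$, $P$ is a finitely generated $R$‑module with invariant factors $\delta_1\mid\delta_2\mid\cdots$, $y\in P$, and $Q=P/Ry$ has invariant factors $\varepsilon_1\mid\varepsilon_2\mid\cdots$, then $\varepsilon_i\mid\delta_i\mid\varepsilon_{i+1}$ for every $i$. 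Granting this, the chain $d_1(N)\mid d_1(M)\mid d_2(N)\mid\cdots\mid d_r(N)\mid d_r(M)\mid d_{r+1}(N)$ follows by translating back (using $\mathrm{rank}(N)\in\{r,r+1\}$, which dictates where the padding zeros sit), and the maximal invariant factor relations are the instance $i=r$: $d_r(N)\mid d_r(M)$ when $\mathrm{rank}(N)=r$, and $d_r(M)\mid d_{r+1}(N)=d(N)$ when $\mathrm{rank}(N)=r+1$.

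For the module lemma I would use that the minimal number of generators of a finitely generated $R$‑module $W$ equals $\dim_{R/p}(W\otimes_R R/p)$, and that for $W=p^kP$ this equals $\#\{j:v_p(\delta_j)>k\}$ (with $v_p(0):=\infty$), an explicit function of the $\delta_j$. Two inputs then drive the argument: (i) $p^kQ$ is a quotient of $p^kP$, hence needs no more generators; and (ii) $p^kQ\cong p^kP/(p^kP\cap Ry)$ with $p^kP\cap Ry$ cyclic, so by right exactness of $-\otimes_R R/p$ its number of generators is at least that of $p^kP$ minus one. Applying (i) at level $k=v_p(\delta_i)$ forces $v_p(\varepsilon_i)\le v_p(\delta_i)$, i.e. $\varepsilon_i\mid\delta_i$; applying (ii) at level $k=v_p(\delta_i)-1$ forces $v_p(\varepsilon_{i+1})\ge v_p(\delta_i)$, i.e. $\delta_i\mid\varepsilon_{i+1}$. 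The degenerate cases $\delta_i\in\{0,1\}$ and the precise bookkeeping with ranks are routine.

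The main obstacle is input (ii) — more precisely, recognizing that one must exploit the cyclicity of $\mathcal{N}/\mathcal{M}$ rather than merely compare $M$ and $N$ termwise. A direct attack via greatest common divisors of $k\times k$ minors only yields the cumulative interlacing $g_{k-1}(M)\mid g_k(N)\mid g_k(M)$ for the products $g_k=d_1\cdots d_k$ (each $k\times k$ minor of $M$ is one of $N$, and Laplace expansion along $\bm c^T$ writes a $k\times k$ minor of $N$ as a combination of $(k-1)\times(k-1)$ minors of $M$); but this — even combined with the convexity of $k\mapsto v_p(g_k)$ — is strictly weaker than the per‑index interlacing, and the cyclic‑quotient argument is exactly what closes that gap.
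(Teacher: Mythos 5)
The paper does not prove this theorem: it is stated with the attribution ``[\cite{Thompson1979}, Interlacing Divisibility Theorem]'' and imported as a black box from Thompson's 1979 article on interlacing inequalities for invariant factors. So there is no internal proof to compare against; you are supplying a proof the authors chose not to reproduce.

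Your argument is correct. The localization reduction is sound: for each prime $p$, the $p$-adic valuation $v_p(d_i(M))$ is the exponent of the $i$-th invariant factor of $R^n/\mathcal{M}$ over the DVR $R=\mathbb{Z}_{(p)}$, and divisibility in $\mathbb{Z}$ is equivalent to the corresponding valuation inequalities at every $p$. The module lemma is also correct. Writing $\mu(W)=\dim_{R/p}(W\otimes R/p)$ and decomposing $P\cong\bigoplus_j R/\delta_j$, one gets $\mu(p^kP)=\#\{j:v_p(\delta_j)>k\}$ (with $v_p(0)=\infty$ counting the free summands for every $k$). Input (i) — that $p^kQ$ is a quotient of $p^kP$ — gives $\mu(p^kQ)\le\mu(p^kP)$; taking $k=v_p(\delta_i)$ bounds the suffix of indices $j$ with $v_p(\varepsilon_j)>k$ by $n-i$, whence $v_p(\varepsilon_i)\le v_p(\delta_i)$. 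Input (ii) uses the second isomorphism theorem to write $p^kQ\cong p^kP/(p^kP\cap Ry)$, notes that $p^kP\cap Ry$ is a submodule of the cyclic module $Ry$ hence cyclic, and tensors the short exact sequence with $R/p$ to get $\mu(p^kQ)\ge\mu(p^kP)-1$; taking $k=v_p(\delta_i)-1$ forces the suffix $\{j:v_p(\varepsilon_j)\ge v_p(\delta_i)\}$ to contain $\{i+1,\dots\}$, giving $v_p(\varepsilon_{i+1})\ge v_p(\delta_i)$. The only index range actually needed for the theorem is $1\le i\le r=\mathrm{rank}(M)$, where $\delta_i\neq0$ so $v_p(\delta_i)<\infty$; the endpoint case $d_r(M)\mid d_{r+1}(N)$ falls out because $d_{r+1}(N)=0$ when $\mathrm{rank}(N)=r$ and the valuation bound applies when $\mathrm{rank}(N)=r+1$. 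Your closing remark is also accurate: the gcd-of-minors comparison yields only the cumulative interlacing $g_{k-1}(M)\mid g_k(N)\mid g_k(M)$, and this together with convexity of $k\mapsto v_p(g_k)$ is strictly weaker than the per-index statement (one can build numerical sequences satisfying the cumulative constraints but violating per-index interlacing), so the use of cyclicity of $\mathcal{N}/\mathcal{M}$ is genuinely load-bearing. Since the paper cites rather than proves the result, your proof stands as a clean self-contained alternative to the reference.
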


\subsection{Characteristic quasi-polynomials of truncated arrangements}\label{Sec2-2}
In this subsection, we first derive a counting formula for $\#\big(V_q(B,\bm b)-\bigcup_{j=1}^mH_{j,q}^{(B,\bm b)}\big)$ via the inclusion-exclusion principle. By the theory of elementary divisors,  we then show that the counting function can be uniformly represented as a quasi-polynomial for all integers $q>q_0$ in \autoref{Main1}. Finally, we present explicit expressions for the constituents of this polynomial, and further explore their interrelationships in \autoref{Main2}.

Let us first analyze the number of solutions in $\mathbb{Z}_q^n$ to the system $[M]_q\bm x=[\bm c]_q$.
\begin{lemma}\label{Compute}
Let $q\in\mathbb{Z}_{>0}$, $\bm c\in\mathbb{Z}^m$, and $M\in\mathcal{M}_{m\times n}(\mathbb{Z})$ be an integral matrix of rank $r$. Then the number of solutions of the system $[M]_q\bm x=[\bm c]_q$ in $\mathbb{Z}_q^n$ is
\[
\#V_q(M,\bm c)=\begin{cases}q^{n-r}\prod_{j=1}^{r}{\rm gcd}\big(q,d_j(M)\big),&\text{ if } [\bm c]_q\in\Col_q(M);\\
0,&\text{ otherwise}.
\end{cases}
\]
\end{lemma}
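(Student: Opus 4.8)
The plan is to reduce everything to the Smith normal form of $M$ and then count solutions coordinate-by-coordinate. First I would invoke the elementary divisor theorem to write $PMQ=D$ with $P\in GL_m(\mathbb{Z})$, $Q\in GL_n(\mathbb{Z})$ and $D=\diag\big(d_1(M),\ldots,d_r(M),0,\ldots,0\big)$ (in the $m\times n$ sense). Since $P$ and $Q$ are unimodular, their $q$-reductions $[P]_q$ and $[Q]_q$ are invertible over $\mathbb{Z}_q$, so the substitution $\bm x\mapsto [Q]_q\bm y$ gives a bijection between the solution set of $[M]_q\bm x=[\bm c]_q$ in $\mathbb{Z}_q^n$ and the solution set of $[D]_q\bm y=[P\bm c]_q$ in $\mathbb{Z}_q^n$. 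Hence $\#V_q(M,\bm c)=\#V_q(D,P\bm c)$, and it suffices to count solutions of the diagonalized system.

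Next I would observe that the diagonalized system $[D]_q\bm y=[\bm c']_q$ (writing $\bm c'=P\bm c$, with components $c'_1,\ldots,c'_m$) decouples completely: for $1\le j\le r$ it reads $[d_j(M)]_q y_j=[c'_j]_q$, for $r<j\le m$ it reads $0=[c'_j]_q$, and the variables $y_{r+1},\ldots,y_n$ are free, each contributing a factor of $q$, i.e. $q^{n-r}$ in total. For each $j\le r$, the single congruence $d_j(M)\,y_j\equiv c'_j\pmod q$ is a standard linear congruence in one unknown: it is solvable iff $\gcd\big(q,d_j(M)\big)\mid c'_j$, in which case it has exactly $\gcd\big(q,d_j(M)\big)$ solutions in $\mathbb{Z}_q$. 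For $j>r$ solvability requires $c'_j\equiv 0\pmod q$ and contributes no choices. Multiplying these counts together yields $q^{n-r}\prod_{j=1}^{r}\gcd\big(q,d_j(M)\big)$ when all the solvability conditions hold, and $0$ otherwise.

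The remaining point is to identify the joint solvability condition ``$\gcd(q,d_j(M))\mid c'_j$ for all $j\le r$ and $c'_j\equiv0\pmod q$ for $j>r$'' with the condition $[\bm c]_q\in\Col_q(M)$. Here I would argue that $[\bm c]_q\in\Col_q(M)$ is exactly the statement that the system $[M]_q\bm x=[\bm c]_q$ is solvable over $\mathbb{Z}_q$, which by the bijection above is equivalent to solvability of the diagonalized system, which is precisely the conjunction of the coordinatewise conditions just described. Alternatively one can cite \autoref{Solvable}: $[\bm c]_q\in\Col_q(M)$ iff the invariant factors of $[M,\bm c]_q$ equal those of $[M,\bm 0]_q$, and this in turn corresponds to the diagonalized system being consistent. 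Either way, the count splits into the two cases in the statement, completing the proof.

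I do not anticipate a genuine obstacle here; the only mildly delicate step is being careful that the case $r<m$ (extra zero rows of $D$, hence genuine consistency constraints $c'_j\equiv0$) and the case $r<n$ (genuine free variables) are handled correctly and simultaneously, and that the equivalence ``solvable over $\mathbb{Z}_q$'' $\Leftrightarrow$ ``$[\bm c]_q\in\Col_q(M)$'' is stated cleanly rather than left implicit.
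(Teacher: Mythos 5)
Your proof is correct and takes essentially the same approach as the paper: reduce via the Smith normal form and a unimodular change of variables, then count solutions of the decoupled linear congruences. The only cosmetic difference is that the paper first translates by a particular solution to reduce to the homogeneous case $[\bm c]_q=[\bm 0]_q$ before applying the Smith normal form, whereas you carry the right-hand side $[P\bm c]_q$ through the diagonalization and handle the consistency conditions $\gcd(q,d_j(M))\mid c'_j$ and $c'_j\equiv 0\pmod q$ directly; both yield the same count.
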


\begin{proof}
If $[\bm c]_q\notin\Col_q(M)$, then the system $[M]_q\bm x=[\bm c]_q$ is unsolvable in $\mathbb{Z}_q^n$. Hence, $\#V_q(M,\bm c)=0$ in this case. If $[\bm c]_q\in\Col_q(M)$, then the system $[M]_q\bm x=[\bm c]_q$ has solutions in $\mathbb{Z}_q^n$. Thus, for some $\bm x_0\in\mathbb{Z}_q^m$ satisfying $[M]_q\bm x_0=[\bm c]_q$, we have 
\[
V_q(M,\bm c)=\bm x_0+\big\{\bm x\in\mathbb{Z}^m_q:[M]_q\bm x=[\bm 0]_q\big\}=\bm x_0+V_q(M,\bm 0).
\]
This implies $\#V_q(M,\bm c)=\#V_q(M,\bm 0)$. On the other hand, from the theory of elementary divisors, there exist unimodular matrices $P\in GL_m(\mathbb{Z})$ and $Q\in GL_n(\mathbb{Z})$ such that
\[
PMQ=D={\rm diag}\big(d_1(M),\ldots,d_r(M),0,\ldots,0\big).
\]
It follows that $V_q(M,\bm 0)=V_q(PM,\bm 0)$. Since $Q$ is an invertible matrix, the group homomorphism
\[
V_q(M,\bm 0)=V_q(PM,\bm 0)\to V_q(PMQ,\bm 0),\quad\bm x\mapsto Q^{-1}\bm x
\]
is an isomorphism. This indicates $\#V_q(M,\bm 0)=\#V_q(D,\bm 0)$. The system $D\bm x=\bm 0$ in $\mathbb{Z}_q^n$ consists of the equations $d_j(M)x_j=0$ with $j=1,2,\ldots,r$. By elementary number theory, every equation $d_j(M)x_j=0$ has exactly ${\rm gcd}\big(q,d_j(M)\big)$ solutions in $\mathbb{Z}_q$. So, we have 
\[
\#V_q(M,\bm c)=\#V_q(D,\bm 0)=q^{n-r}\prod_{j=1}^r{\rm gcd}\big(q,d_j(M)\big).
\]
We finish the proof.
\end{proof}

To state our results more conveniently, we introduce some necessary notations. From now on, unless otherwise stated, we always assume $A\in\mathcal{M}_{m\times n}(\mathbb{Z})$, $B\in\mathcal{M}_{l\times n}(\mathbb{Z})$, $\bm a\in\mathbb{Z}^m$ and $\bm b\in\mathbb{Z}^l$. For any $J\subseteq [m]$, define 
\[
C_J:=\begin{bmatrix}B\\ A_J\end{bmatrix}\in\mathcal{M}_{(l+\#J)\times n}(\mathbb{Z}),\;\bm c_J:=\begin{bmatrix}\bm b\\\bm a_J\end{bmatrix}\in\mathbb{Z}^{l+\#J}\And \bar{C}_J:=[C_J\;\bm c_J]\in\mathcal{M}_{(l+\#J)\times {(n+1)}}(\mathbb{Z}), 
\]
where $A_J\in\mathcal{M}_{\#J\times n}(\mathbb{Z})$ is the submatrix of $A$ consisting of the rows indexed by $J$ and $\bm a_J\in\mathbb{Z}^{\#J}$ is the subvector of $\bm a$ consisting of the entries indexed by $J$. For simplicity, we write $C:=C_{[m]}$ and $\bar{C}:=\bar{C}_{[m]}$. Set 
\[
r(J):={\rm rank}(C_J)\quad\And\quad\bar{r}(J):={\rank}(\bar{C}_J).
\]
Let $0<d_{J,1}\mid d_{J,2}\mid\cdots\mid d_{J,r(J)}$ be the invariant factors of $C_J$, and let $0<\bar{d}_{J,1}\mid \bar{d}_{J,2}\mid\cdots\mid {\bar d}_{J,\bar{r}(J)}$  be the invariant factors of $\bar{C}_J$. Further set
\[
\rho_C:={\rm lcm}\big(d_{J,r(J)}:J\subseteq[m],\;\bar{r}(J)=r(J)\big)
\]
and
\[
q_0=q_0(\bar{C}):={\rm max}\big\{\bar{d}_{J,\bar{r}(J)}:J\subseteq[m],\;\bar{r}(J)=r(J)+1\big\}.
\]
When $\big\{J\subseteq[m]:\bar{r}(J)=r(J)+1\big\}=\emptyset$, we understand $q_0=0$. Define $d_J(q):=\prod_{j=1}^{r(J)}{\rm gcd}(d_{J,j},q)$ and 
\[
\tilde{d}_J(q):=
\begin{cases}
d_J(q),& \text{ if } {\rm gcd}(q,d_{J,j})={\rm gcd}(q,\bar{d}_{J,j}) \text{ for all } j=1,\ldots,\bar{r}(J);\\
0,& \text{ otherwise},
\end{cases}
\]
with the convention $d_{J,\bar{r}(J)}=0$ if $\bar{r}(J)=r(J)+1$.

We now have enough tools to prove our first main result, which will be applied to graph coloring and flow problems in \autoref{Sec4}.
\begin{theorem}\label{Main1}
With the above notations, the counting formula of $M\big(\mathcal{A}_q^{(B,\bm b)}\big)$ is 
\begin{equation}\label{Counting-Formula}
\#M\big(\mathcal{A}_q^{(B,\bm b)}\big)=\sum_{J\subseteq[m]}(-1)^{\#J}\tilde{d}_J(q)q^{n-r(J)}.
\end{equation}
Moreover, there exists a quasi-polynomial $\chi^{\text{quasi}}\big(\mathcal{A}^{(B,\bm b)},q\big)$ with $\rho_C$ as a period such that  $\#M\big(\mathcal{A}^{(B,\bm b)}_q\big)=\chi^{\text{quasi}}\big(\mathcal{A}^{(B,\bm b)},q\big)$ for all integers $q>q_0$. More precisely,  $\chi^{\text{quasi}}\big(\mathcal{A}^{(B,\bm b)},q\big)$ has the form
\begin{equation}\label{CQP}
\chi^{\text{quasi}}\big(\mathcal{A}^{(B,\bm b)},q\big)=\sum_{J:\, J\subseteq[m],\,\bar{r}(J)=r(J)}(-1)^{\#J}\tilde{d}_J(q)q^{n-r(J)}.
\end{equation}
\end{theorem}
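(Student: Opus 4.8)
The plan is to combine the inclusion--exclusion principle with \autoref{Compute}, \autoref{Solvable} and the Interlacing Divisibility Theorem (\autoref{IDT}). I would first write $M\big(\mathcal{A}_q^{(B,\bm b)}\big)=V_q(B,\bm b)\setminus\bigcup_{i=1}^m H_{i,q}^{(B,\bm b)}$ and observe that, for every $J\subseteq[m]$, the intersection $\bigcap_{i\in J}H_{i,q}^{(B,\bm b)}$ is exactly the set $V_q(C_J,\bm c_J)$ of solutions in $\mathbb{Z}_q^n$ of $[C_J]_q\bm x=[\bm c_J]_q$ (the empty intersection being $V_q(B,\bm b)=V_q(C_\emptyset,\bm c_\emptyset)$). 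Inclusion--exclusion then gives
\[
\#M\big(\mathcal{A}_q^{(B,\bm b)}\big)=\sum_{J\subseteq[m]}(-1)^{\#J}\,\#V_q(C_J,\bm c_J).
\]
By \autoref{Compute}, applied to the rank-$r(J)$ matrix $C_J$ and the vector $\bm c_J$, each summand equals $q^{n-r(J)}d_J(q)$ if $[\bm c_J]_q\in\Col_q(C_J)$ and $0$ otherwise. Hence the counting formula \eqref{Counting-Formula} will follow once we show that $[\bm c_J]_q\in\Col_q(C_J)$ holds precisely when the defining condition of $\tilde d_J(q)$ is satisfied.

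For this equivalence, one first notes that $\bar C_J$ is $C_J$ with one column appended, so $\bar r(J)\in\{r(J),r(J)+1\}$, and that \autoref{IDT} applied to the transposes $C_J^T,\bar C_J^T$ yields $\bar d_{J,j}\mid d_{J,j}$ for $1\le j\le r(J)$. By \autoref{Solvable}, $[\bm c_J]_q\in\Col_q(C_J)$ iff $[\bar C_J]_q$ and $[C_J]_q$ have the same invariant factors over $\mathbb{Z}_q$, and by \eqref{eq:invariant factor} those $\mathbb{Z}_q$-invariant factors are represented by the positive divisors ${\rm gcd}(q,d_{J,j})$ of $q$ that are ${}<q$, respectively by the ${\rm gcd}(q,\bar d_{J,j})$ that are ${}<q$. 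Using that within each of $C_J$ and $\bar C_J$ the invariant factors form a divisibility chain (so the indices $j$ with ${\rm gcd}(q,\cdot)=q$ form a final segment) and that the number of $\mathbb{Z}_q$-invariant factors is at most the rank, a short case analysis on whether $\bar r(J)=r(J)$ or $\bar r(J)=r(J)+1$ --- reading $d_{J,\bar r(J)}$ as $0$ in the latter, so that ${\rm gcd}(q,d_{J,\bar r(J)})=q$ --- shows that equality of the two invariant-factor lists is equivalent to ${\rm gcd}(q,d_{J,j})={\rm gcd}(q,\bar d_{J,j})$ for all $1\le j\le\bar r(J)$; the convention $d_{J,\bar r(J)}:=0$ exactly records that the possible extra invariant factor of $\bar C_J$ must be trivial modulo $q$. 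This is precisely the condition defining $\tilde d_J$, so $\#V_q(C_J,\bm c_J)=\tilde d_J(q)\,q^{n-r(J)}$ for every $J$, which proves \eqref{Counting-Formula}.

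To pass to the quasi-polynomial, fix $q>q_0$. If $\bar r(J)=r(J)+1$, then $q_0\ge\bar d_{J,\bar r(J)}$ by the definition of $q_0$, so $0<\bar d_{J,\bar r(J)}<q$, hence $q\nmid\bar d_{J,\bar r(J)}$ and the condition above fails at $j=\bar r(J)$; thus $\tilde d_J(q)=0$ and all such terms drop out of \eqref{Counting-Formula}, leaving exactly \eqref{CQP}. It remains to see that \eqref{CQP} is a genuine quasi-polynomial with period $\rho_C$. For each $J$ with $\bar r(J)=r(J)$ we have $d_{J,j}\mid d_{J,r(J)}\mid\rho_C$ for all $j$, and $\bar d_{J,j}\mid d_{J,j}\mid\rho_C$ by the interlacing from \autoref{IDT}; since ${\rm gcd}(q,d)$ depends only on $q\bmod\rho_C$ whenever $d\mid\rho_C$, both the truth of the $\tilde d_J$-condition and the value $d_J(q)$ are periodic in $q$ with period $\rho_C$, so $\tilde d_J$ is a $\rho_C$-periodic function and \eqref{CQP} is a quasi-polynomial of period $\rho_C$ (grouping its terms by the power $q^{n-j}$ displays the constituents). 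Since \eqref{CQP} agrees with $\#M\big(\mathcal{A}_q^{(B,\bm b)}\big)$ for every $q>q_0$, we take it as $\chi^{\text{quasi}}\big(\mathcal{A}^{(B,\bm b)},q\big)$.

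The step I expect to be the main obstacle is the equivalence in the second paragraph: aligning the module-theoretic solvability criterion of \autoref{Solvable} with the purely numerical definition of $\tilde d_J(q)$. Getting the case split $\bar r(J)=r(J)$ versus $\bar r(J)=r(J)+1$ right, and in particular the precise bookkeeping between the convention $d_{J,\bar r(J)}:=0$, formula \eqref{eq:invariant factor} and \autoref{IDT}, is where the argument needs care; the remaining steps are routine inclusion--exclusion and elementary divisibility manipulations.
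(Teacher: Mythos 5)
Your proposal is correct and follows essentially the same route as the paper: inclusion--exclusion to reduce to counting $\#V_q(C_J,\bm c_J)$, then \autoref{Compute} and \autoref{Solvable} to identify each summand with $\tilde d_J(q)q^{n-r(J)}$, the vanishing of the terms with $\bar r(J)=r(J)+1$ for $q>q_0$, and periodicity via $d_{J,j}\mid\rho_C$ and the interlacing $\bar d_{J,j}\mid d_{J,j}$ from \autoref{IDT}. The only difference is that you spell out the case analysis matching \autoref{Solvable}'s invariant-factor criterion to the numerical definition of $\tilde d_J(q)$, a step the paper passes over silently; your analysis of it is correct.
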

\begin{proof}
We apply the inclusion-exclusion principle to compute $M\big(\mathcal{A}_q^{(B,\bm b)}\big)$. For any subset $S\subseteq V_q(B,\bm b)$, its indicator function is defined as follows: $1_S(\bm x)=1$ if $\bm x\in S$, and 0 otherwise. Thus, for any $\bm x\in V_q(B,\bm b)$, $\bm x\in M\big(\mathcal{A}_q^{(B,\bm b)}\big)=V_q(B,\bm b)-\bigcup_{j=1}^mH_{j,q}^{(B,\bm b)}$ if and only if 
\[
\prod_{j=1}^m\big(1_{V_q(B,\bm b)}-1_{H_{j,q}^{(B,\bm b)}}\big)(\bm x)=1.
\]
Note that $1_{V_q(B,\bm b)}\cdot 1_{H_{j,q}^{(B,\bm b)}}=1_{H_{j,q}^{(B,\bm b)}}$ and $1_{H_{i,q}^{(B,\bm b)}}\cdot 1_{H_{j,q}^{(B,\bm b)}}=1_{H_{i,q}^{(B,\bm b)}\bigcap H_{j,q}^{(B,\bm b)}}$. It follows that 
\begin{align}\label{Eq1}
\#M\big(\mathcal{A}_q^{(B,\bm b)}\big)&=\sum_{\bm x\in V_q(B,\bm b)}\prod_{j=1}^m\big(1_{V_q(B,\bm b)}-1_{H_{j,q}^{(B,\bm b)}}\big)(\bm x)\notag\\
&=\sum_{\bm x\in V_q(B,\bm b)}\sum_{J\subseteq[m]}(-1)^{\#J}1_{\bigcap_{j\in J}H_{j,q}^{(B,\bm b)}}(\bm x)\notag\\
&=\sum_{J\subseteq[m]}(-1)^{\#J}\sum_{\bm x\in V_q(B,\bm b)}1_{\bigcap_{j\in J}H_{j,q}^{(B,\bm b)}}(\bm x)\notag\\
&=\sum_{J\subseteq[m]}(-1)^{\#J}\#\Big(\bigcap_{j\in J}H_{j,q}^{(B,\bm b)}\Big).
\end{align} 
Further notice that $\bigcap_{j\in J}H_{j,q}^{(B,\bm b)}$ is the solution set in $\mathbb{Z}_q^n$ of the system $[C_J]_q\bm x=[\bm c_J]_q$. From \autoref{Compute}, we have
\begin{equation}\label{Eq2}
\#\bigcap_{j\in J}H_{j,q}^{(B,\bm b)}=\begin{cases}q^{n-r(J)}\prod_{j=1}^{r(J)}{\rm gcd}(q,d_{J,j}),&\text{ if } [\bm c_J]_q\in\Col_q\big([C_J]_q\big);\\
0,&\text{ otherwise}.
\end{cases}
\end{equation}
Applying \autoref{Solvable} to \eqref{Eq2}, we deduce
\begin{equation}\label{Eq3}
\#\bigcap_{j\in J}H_{j,q}^{(B,\bm b)}=q^{n-r(J)}\tilde{d}_J(q).
\end{equation}
It follows that equations \eqref{Eq1} and \eqref{Eq3} directly yield \eqref{Counting-Formula} in \autoref{Main1}. 

Next, we prove the second part in \autoref{Main1}. Suppose $q>q_0$. If $J\subseteq[m]$ with $\bar{r}(J)=r(J)+1$, then $d_{J,\bar{r}(J)}=0$ and $0<{\rm gcd}(q,\bar{d}_{J,\bar{r}(J)})\le \bar{d}_{J,\bar{r}(J)}<q$ since $q>q_0\ge\bar{d}_{J,\bar{r}(J)}$. Consequently, ${\rm gcd}(q,d_{J,\bar{r}(J)})=q>{\rm gcd}(q,\bar{d}_{J,\bar{r}(J)})$. Hence, $\tilde{d}_J(q)=0$ in this case. It follows that \eqref{Counting-Formula} can be simplified to the following form
\begin{align}\label{Counting-Formula1}
\#M\big(\mathcal{A}_q^{(B,\bm b)}\big)=\sum_{J:\, J\subseteq[m],\,\bar{r}(J)=r(J)}(-1)^{\#J}\tilde{d}_J(q)q^{n-r(J)}.
\end{align}
We now consider the case $J\subseteq[m]$ with $\bar{r}(J)=r(J)$. Since $\rho_C={\rm lcm}\big(d_{J,r(J)}:J\subseteq[m],\emph{} \bar{r}(J)=r(J)\big)$ and $d_{J,j}\mid d_{J,r(J)}\mid\rho_C$ for all $j=1,\ldots,r(J)$, we have ${\rm gcd}(q+\rho_C,d_{J,j})={\rm gcd}(q,d_{J,j})$. In addition,  $\bar{d}_{J,\bar{r}(J)}\mid d_{J,r(J)}\mid \rho_C$ via \autoref{IDT}, hence, arguing as before, ${\rm gcd}(q+\rho_C,\bar{d}_{J,j})={\rm gcd}(q,\bar{d}_{J,j})$ for all $j=1,\ldots,r(J)$. Immediately, we deduce
\[
\tilde{d}_J(q+\rho_C)=\tilde{d}_J(q),\quad \forall\; J\subseteq[m]\text{ with }\bar{r}(J)=r(J).
\]
Combining \eqref{Counting-Formula1}, we conclude that the counting function $\#M\big(\mathcal{A}_q^{(B,\bm b)}\big)$ coincides with the quasi-polynomial $\chi^{\text{quasi}}\big(\mathcal{A}^{(B,\bm b)},q\big)$ with $\rho_C$ as a period for all integers $q>q_0$.
\end{proof}

It is worth noting that \autoref{Main1} recovers and extends the early results on characteristic quasi-polynomials of integral arrangements given in \cite{CW2012,KTT2008,KTT2011}. More specifically, taking $\bm a=\bm 0$ and $\bm b=\bm 0$,  \autoref{Main1} specializes to the central case given in \cite[Theorem 1.1]{CW2012}. When we restrict to the case $l = 0$, \autoref{Main1} reduces to \cite[Theorem 3.1]{KTT2011}. If, furthermore, $\bm{a} = \bm{0}$, it becomes exactly \cite[Theorem 2.4]{KTT2008}. 

As with \cite{KTT2011}, we refer to the quasi-period $\rho_C$ as the {\bf lcm period}, and term the quasi-polynomial $\chi^{\text{quasi}}\big(\mathcal{A}^{(B,\bm b)},q\big)$ the {\bf characteristic quasi-polynomial}. In this context, we can decompose $\chi^{\text{quasi}}\big(\mathcal{A}^{(B,\bm b)},q\big)$ into polynomial functions corresponding to residue classes of $\rho_C$. Namely, for every $j=1,\ldots,\rho_C$, there exists a unique polynomial $f_j\big(\mathcal{A}^{(B,\bm b)},t\big)$ such that
\[
f_j\big(\mathcal{A}^{(B,\bm b)},q\big)=\chi^{\text{quasi}}\big(\mathcal{A}^{(B,\bm b)},q\big) \;\quad\text{ for }\;q\equiv j \pmod{\rho_C}.
\]
In the next result, we examine the relationships existing among the polynomials $f_1,f_2,\ldots,f_{\rho_C}$, extending \cite[Theorem 2.3]{CW2012} to the non-central case. 
\begin{theorem}\label{Main2} With the above notations, the following results hold:
\begin{itemize}
\item[{\rm(1)}] The polynomial $f_a\big(\mathcal{A}^{(B,\bm b)},t\big)$ with $a=1,\ldots,\rho_C$, can be written in the form
\[
f_a\big(\mathcal{A}^{(B,\bm b)},t\big)=\sum_{J:\, J\subseteq[m],\,\bar{r}(J)=r(J)}(-1)^{\#J}\tilde{d}_J(a)t^{n-r(J)}.
\]
\item[{\rm(2)}] For any integers $1\le a,b\le\rho_C$, if ${\rm gcd}(a,\rho_C)={\rm gcd}(b,\rho_C)$, then
\[
f_a\big(\mathcal{A}^{(B,\bm b)},t\big)=f_b\big(\mathcal{A}^{(B,\bm b)},t\big).
\]
In other words, the characteristic quasi-polynomial $\chi^{\text{quasi}}\big(\mathcal{A}^{(B,\bm b)},q\big)$ has the gcd property.
\item[{\rm(3)}] For any integers $q>q_0$ and $1\le a\le\rho_C$, if ${\rm gcd}(q,\rho_C)=a$, then 
\[
\chi^{\text{quasi}}\big(\mathcal{A}^{(B,\bm b)},q\big)=f_a\big(\mathcal{A}^{(B,\bm b)},q\big).
\]
\end{itemize}
\end{theorem}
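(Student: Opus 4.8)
The plan is to derive all three parts from \autoref{Main1} together with one elementary number-theoretic observation. Recall that the proof of \autoref{Main1} already established, for every $J\subseteq[m]$ with $\bar r(J)=r(J)$, that the periodic function $\tilde d_J$ satisfies $\tilde d_J(q+\rho_C)=\tilde d_J(q)$, and that the expression \eqref{CQP} represents $\chi^{\text{quasi}}\big(\mathcal{A}^{(B,\bm b)},q\big)$ for all $q>q_0$ (the terms with $\bar r(J)=r(J)+1$ having already been discarded there, so no separate argument about them is needed). Part (1) is then immediate: if $q>q_0$ and $q\equiv a\pmod{\rho_C}$, then $\tilde d_J(q)=\tilde d_J(a)$ for every $J$ appearing in \eqref{CQP}, so
\[
\chi^{\text{quasi}}\big(\mathcal{A}^{(B,\bm b)},q\big)=\sum_{J:\,J\subseteq[m],\,\bar r(J)=r(J)}(-1)^{\#J}\tilde d_J(a)\,q^{\,n-r(J)}.
\]
Since infinitely many such $q$ exist and the right-hand side is a polynomial in $q$, it must be the $a$-constituent $f_a\big(\mathcal{A}^{(B,\bm b)},t\big)$, which is exactly the claimed formula.

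For part (2) the crux is that $\tilde d_J(q)$ depends on $q$ only through $\gcd(q,\rho_C)$. Each $d_{J,j}$ divides $d_{J,r(J)}\mid\rho_C$ by the definition of $\rho_C$, and each $\bar d_{J,j}$ divides $\bar d_{J,\bar r(J)}=\bar d_{J,r(J)}$, which in turn divides $d_{J,r(J)}\mid\rho_C$ by \autoref{IDT} (valid since $\bar r(J)=r(J)$). Now for any $d\mid\rho_C$ and any $q\in\mathbb{Z}_{>0}$ one has $\gcd(q,d)=\gcd\big(\gcd(q,\rho_C),d\big)$: indeed $\gcd(q,d)$ divides $d$ and, since $d\mid\rho_C$, divides $\gcd(q,\rho_C)$, hence divides the right-hand side, while the reverse divisibility is clear. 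Thus each gcd entering the definition of $\tilde d_J$ — namely $\gcd(q,d_{J,j})$ and $\gcd(q,\bar d_{J,j})$ — is a function of $\gcd(q,\rho_C)$ alone, and therefore so is $\tilde d_J(q)$. Consequently $\gcd(a,\rho_C)=\gcd(b,\rho_C)$ forces $\tilde d_J(a)=\tilde d_J(b)$ for every $J$ with $\bar r(J)=r(J)$, and part (1) then gives $f_a\big(\mathcal{A}^{(B,\bm b)},t\big)=f_b\big(\mathcal{A}^{(B,\bm b)},t\big)$; in particular $\chi^{\text{quasi}}\big(\mathcal{A}^{(B,\bm b)},q\big)$ has the gcd property with respect to $\rho_C$.

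Finally, for part (3), write $q\equiv j\pmod{\rho_C}$ with $1\le j\le\rho_C$. By the definition of the constituents and \autoref{Main1}, $\chi^{\text{quasi}}\big(\mathcal{A}^{(B,\bm b)},q\big)=f_j\big(\mathcal{A}^{(B,\bm b)},q\big)$ for $q>q_0$. Since $a=\gcd(q,\rho_C)$ divides $\rho_C$, we have $\gcd(a,\rho_C)=a=\gcd(q,\rho_C)=\gcd(j,\rho_C)$, so part (2) yields $f_j\big(\mathcal{A}^{(B,\bm b)},t\big)=f_a\big(\mathcal{A}^{(B,\bm b)},t\big)$, and hence $\chi^{\text{quasi}}\big(\mathcal{A}^{(B,\bm b)},q\big)=f_a\big(\mathcal{A}^{(B,\bm b)},q\big)$.

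Most of this is bookkeeping once \autoref{Main1} is available; the only genuine ingredients are the divisibility chain $\bar d_{J,j}\mid\rho_C$ (resting on the Interlacing Divisibility Theorem) and the identity $\gcd(q,d)=\gcd(\gcd(q,\rho_C),d)$ for $d\mid\rho_C$. The main point requiring care is organizational: part (2) must be proved before it is invoked in part (3), and one should make explicit that a polynomial agreeing with \eqref{CQP} along an infinite arithmetic progression is necessarily the constituent $f_a$. I do not anticipate a serious obstacle here.
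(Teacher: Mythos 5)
Your proof is correct and follows essentially the same line of argument as the paper: use the periodicity of $\tilde d_J$ from the proof of \autoref{Main1}, the reduction $\gcd(q,d)=\gcd(\gcd(q,\rho_C),d)$ for $d\mid\rho_C$, and the fact that a polynomial agreeing with $\chi^{\text{quasi}}$ along an arithmetic progression must be the corresponding constituent. One small point where your write-up is actually more careful than the paper's own proof of parts (1) and (2): you explicitly note that both $d_{J,j}\mid\rho_C$ and $\bar d_{J,j}\mid\rho_C$ (via \autoref{IDT}) are needed so that every gcd entering the definition of $\tilde d_J$ is controlled by $\gcd(q,\rho_C)$; the paper's proof of \autoref{Main2} only tracks the $d_{J,j}$, leaving the $\bar d_{J,j}$ case implicit (it is justified in the proof of \autoref{Main1}, but not re-stated).
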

\begin{proof}
$(1)$ For every subset $J\subseteq[m]$ with $\bar{r}(J)=r(J)$ and $1\le j\le r(J)$, we have $0<d_{J,j}\mid d_{J,r(J)}\mid \rho_C$. If $q\equiv a\pmod{\rho_C}$, then $q\equiv a\pmod{d_{J,j}}$ for any $1\le j\le r(J)$. From the division algorithm, we deduce ${\rm gcd}(q,d_{J,j})={\rm gcd}(a,d_{J,j})$. This implies that $\tilde{d}_J(q)=\tilde{d}_J(a)$ when  $q\equiv a\pmod{\rho_C}$ and $\bar{r}(J)=r(J)$. From \eqref{CQP}, we have
\[
f_a\big(\mathcal{A}^{(B,\bm b)},q\big)=\chi^{\text{quasi}}\big(\mathcal{A}^{(B,\bm b)},q\big)=\sum_{J:\, J\subseteq[m],\,\bar{r}(J)=r(J)}(-1)^{\#J}\tilde{d}_J(a)q^{n-r(J)}.
\]
The above equality holds for infinitely many $q'>q_0$ with $q'\equiv a\pmod{\rho_C}$.  By the fundamental theorem of algebra, if two polynomials agree on infinitely many values, they must be identical. Therefore, we conclude $f_a\big(\mathcal{A}^{(B,\bm b)},t\big)=\sum_{J:\, J\subseteq[m],\,\bar{r}(J)=r(J)}(-1)^{\#J}\tilde{d}_J(a)t^{n-r(J)}$.

$(2)$ Note that ${\rm gcd}(q,d_{J,j})={\rm gcd}\big({\rm gcd}(q,\rho_C),d_{J,j}\big)$ since $d_{J,j}\mid\rho_C$. This means that $\tilde{d}_J(q)=\tilde{d}_J\big({\rm gcd}(q,\rho_C)\big)$ when $\bar{r}(J)=r(J)$. Together with ${\rm gcd}(a,\rho_C)={\rm gcd}(b,\rho_C)$, we deduce $\tilde{d}_J(a)=\tilde{d}_J(b)$ when $\bar{r}(J)=r(J)$. Combining part $(1)$, $f_a\big(\mathcal{A}^{(B,\bm b)},t\big)=f_b\big(\mathcal{A}^{(B,\bm b)},t\big)$ holds. 

$(3)$ If ${\rm gcd}(q,\rho_C)=a$, then ${\rm gcd}(q,\rho_C)={\rm gcd}(a,\rho_C)=a$. As shown in  the proof of part $(2)$, we have $\tilde{d}_J(q)=\tilde{d}_J(a)$ when $\bar{r}(J)=r(J)$. Hence,  $\chi^{\text{quasi}}\big(\mathcal{A}^{(B,\bm b)},q\big)=f_a\big(\mathcal{A}^{(B,\bm b)},q\big)$.
\end{proof}

Recall from \autoref{Sec1} that the subspace arrangement $\mathcal{A}^{(B,\bm b)}=\big\{H_1^{(B,\bm b)},\ldots,H_m^{(B,\bm b)}\big\}$ is defined in  the solution space $V(B,\bm b)=\big\{\bm x\in\mathbb{R}^n\mid B\bm x=\bm b\big\}$. When ${\rm rank}(B)\ne{\rm rank}(B,\bm b)$, $V(B,\bm b)=\big\{\bm x\in\mathbb{R}^n\mid B\bm x=\bm b\big\}$ is empty. Consequently, 
the characteristic polynomial $\chi\big(\mathcal{A}^{(B,\bm b)},t\big)$ is the zero polynomial. In this context, it is clear that 
\[
\chi\big(\mathcal{A}^{(B,\bm b)},q\big)=\chi^{\text{quasi}}\big(\mathcal{A}^{(B,\bm b)},q\big)=f_j\big(\mathcal{A}^{(B,\bm b)},q\big)=0
\]
for all integers $q>q_0$ and $j=1,\ldots,\rho_C$. When  ${\rm rank}(B)={\rm rank}(B,\bm b)$, the solution space $V(B,\bm b)$ is a nonempty affine subspace of $\mathbb{R}^n$. Then the characteristic polynomial of the truncated arrangement $\mathcal{A}^{(B,\bm b)}$ can be written in the form
\[
\chi\big(\mathcal{A}^{(B,\bm b)},t\big)=\sum_{J:\, J\subseteq[m],\,\bar{r}(J)=r(J)}(-1)^{\#J}t^{n-r(J)}.
\]
Assume $q>q_0$ is a positive integer coprime to $\rho_C$, i.e., ${\rm gcd}(q,\rho_C)=1$. Since $d_{J,j}\mid \rho_C$, we obtain ${\rm gcd}(q,d_{J,j})=1$ for all $J\subseteq[m]$ with $\bar{r}(J)=r(J)$ and $1\le j\le r(J)$. By the Interlacing Divisibility Theorem (\autoref{IDT}), we have $\bar{d}_{J,j}\mid d_{J,j}\mid \rho_C$. Hence, ${\rm gcd}(q,\tilde{d}_{J,j})=1$ for all $J\subseteq[m]$ with $\bar{r}(J)=r(J)$ and $1\le j\le r(J)$. Consequently, $\tilde{d}_J(q)=1$ for all $J\subseteq[m]$ with $\bar{r}(J)=r(J)$ when ${\rm gcd}(q,\rho_C)=1$. Substituting this into \eqref{CQP}, we arrive at
\[
\chi^{\text{quasi}}\big(\mathcal{A}^{(B,\bm b)},q\big)=\sum_{J:\, J\subseteq[m],\,\bar{r}(J)=r(J)}(-1)^{\#J}q^{n-r(J)}=\chi\big(\mathcal{A}^{(B,\bm b)},q\big).
\]
This implies that the characteristic polynomial $\chi\big(\mathcal{A}^{(B,\bm b)},t\big)$ can be computed by counting points in the complement of $\mathcal{A}^{(B,\bm b)}_q$ for all integers $q>q_0$ that are relatively prime to a constant $\rho_C$ depending only on $\mathcal{A}^{(B,\bm b)}$, which extends Athanasiadis's result \cite[Theorem 2.1]{Athanasiadis1999} to truncated arrangements. According to \autoref{Main2}, we further deduce $\chi\big(\mathcal{A}^{(B,\bm b)},t\big)=f_1\big(\mathcal{A}^{(B,\bm b)},t\big)$. We close this section by summarizing the above results as follows.
\begin{corollary}\label{Char-Quasi}
If ${\rm gcd}(q,\rho_C)=1$ with integer $q>q_0$, then
\[
\chi\big(\mathcal{A}^{(B,\bm b)},q\big)=\chi^{\text{quasi}}\big(\mathcal{A}^{(B,\bm b)},q\big).
\]
Moreover, we have 
\[
\chi\big(\mathcal{A}^{(B,\bm b)},t\big)=f_1\big(\mathcal{A}^{(B,\bm b)},t\big).
\]
\end{corollary}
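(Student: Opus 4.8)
The plan is to deduce \autoref{Char-Quasi} directly from the discussion immediately preceding it, which already does most of the work; the proof is essentially a matter of assembling \autoref{Main1}, \autoref{Main2}, and the Interlacing Divisibility Theorem (\autoref{IDT}) into a clean statement. First I would dispose of the degenerate case: if $\mathrm{rank}(B)\ne\mathrm{rank}(B,\bm b)$, then $V(B,\bm b)=\emptyset$, so $\chi\big(\mathcal{A}^{(B,\bm b)},t\big)$ is the zero polynomial; on the arithmetic side, for every $J\subseteq[m]$ the rows of $B$ already force $\bar r(J)>r(J)$, so no term survives in \eqref{CQP} and $\chi^{\text{quasi}}\big(\mathcal{A}^{(B,\bm b)},q\big)=0$ as well. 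Thus both identities hold trivially, and in particular $f_1\big(\mathcal{A}^{(B,\bm b)},t\big)=0=\chi\big(\mathcal{A}^{(B,\bm b)},t\big)$.

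Next I would treat the main case $\mathrm{rank}(B)=\mathrm{rank}(B,\bm b)$, where $V(B,\bm b)$ is a nonempty affine subspace and the deletion--restriction (Whitney) expansion of the characteristic polynomial gives
\[
\chi\big(\mathcal{A}^{(B,\bm b)},t\big)=\sum_{J:\,J\subseteq[m],\,\bar r(J)=r(J)}(-1)^{\#J}t^{\,n-r(J)}.
\]
The core step is to show that when $\gcd(q,\rho_C)=1$ one has $\tilde d_J(q)=1$ for every $J$ with $\bar r(J)=r(J)$. Since each invariant factor satisfies $d_{J,j}\mid d_{J,r(J)}\mid\rho_C$, coprimality of $q$ with $\rho_C$ forces $\gcd(q,d_{J,j})=1$ for all $j\le r(J)$, hence $d_J(q)=1$; and by \autoref{IDT} applied to the pair $C_J\subseteq\bar C_J$ we get $\bar d_{J,j}\mid d_{J,j}\mid\rho_C$, so $\gcd(q,\bar d_{J,j})=1$ too, whence $\gcd(q,d_{J,j})=\gcd(q,\bar d_{J,j})=1$ and the ``if'' branch in the definition of $\tilde d_J$ is triggered, giving $\tilde d_J(q)=d_J(q)=1$. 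Substituting into \eqref{CQP} collapses $\chi^{\text{quasi}}\big(\mathcal{A}^{(B,\bm b)},q\big)$ to exactly the Whitney sum above evaluated at $q$, which is $\chi\big(\mathcal{A}^{(B,\bm b)},q\big)$; this proves the first displayed identity for all $q>q_0$ with $\gcd(q,\rho_C)=1$.

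For the second identity I would invoke \autoref{Main2}: taking any such $q$ (infinitely many exist), part (3) gives $\chi^{\text{quasi}}\big(\mathcal{A}^{(B,\bm b)},q\big)=f_{\gcd(q,\rho_C)}\big(\mathcal{A}^{(B,\bm b)},q\big)=f_1\big(\mathcal{A}^{(B,\bm b)},q\big)$, while the first identity gives the same expression equals $\chi\big(\mathcal{A}^{(B,\bm b)},q\big)$. So the two polynomials $f_1\big(\mathcal{A}^{(B,\bm b)},t\big)$ and $\chi\big(\mathcal{A}^{(B,\bm b)},t\big)$ agree at infinitely many integers, hence coincide by the fundamental theorem of algebra. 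I do not anticipate a genuine obstacle here: every ingredient is already in place, and the only point requiring care is the correct bookkeeping with $\bar r(J)$ versus $r(J)$ and making sure the Interlacing Divisibility Theorem is applied row-by-row (augmenting $C_J$ by the single column $\bm c_J$, or dually comparing $C_J$ with $\bar C_J$) so that the chain $\bar d_{J,j}\mid d_{J,j}\mid\rho_C$ is legitimate. A secondary subtlety worth a sentence is justifying that the set of admissible $q$ (coprime to $\rho_C$ and exceeding $q_0$) is infinite, which is immediate since it contains all sufficiently large primes not dividing $\rho_C$.
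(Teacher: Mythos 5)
Your proposal is correct and follows essentially the same route as the paper: the same Whitney-type expansion of $\chi\big(\mathcal{A}^{(B,\bm b)},t\big)$, the same use of $d_{J,j}\mid\rho_C$ together with \autoref{IDT} (giving $\bar d_{J,j}\mid d_{J,j}$) to conclude $\tilde d_J(q)=1$ when $\gcd(q,\rho_C)=1$, and the same appeal to \autoref{Main2} for the identity $\chi\big(\mathcal{A}^{(B,\bm b)},t\big)=f_1\big(\mathcal{A}^{(B,\bm b)},t\big)$. Your treatment of the degenerate case $\rank(B)\ne\rank(B,\bm b)$ is slightly more explicit than the paper's, but the argument is the same.
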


\section{Uniform comparison}\label{Sec3}
This section establishes three unified comparisons: the unsigned coefficients of characteristic quasi-polynomials (see \autoref{Main3}); those of distinct constituents of a characteristic quasi-polynomial (see \autoref{Main4}); and the counting functions (see \autoref{Main5}).
\subsection{Comparison of coefficients}\label{Sec3-1}
In this subsection, we concentrate our attention on the first two of these comparisons. Throughout this section, we assume $r=r(\emptyset)={\rm rank}(B)$ and $s=r([m])={\rm rank}\begin{bmatrix}B\\A\end{bmatrix}$. From \eqref{CQP} in \autoref{Main1}, for all integers $q > q_0$, $\chi^{\text{quasi}}\big(\mathcal{A}^{(B,\bm{b})}, q\big)$ can be written in the form
\begin{equation}\label{CQP1}
\chi^{\text{quasi}}\big(\mathcal{A}^{(B,\bm b)},q\big)=\sum_{j=r}^s(-1)^{j-r}\beta_j(q)q^{n-j},
\end{equation}
where each coefficient $\beta_j(q)$ is a periodic function given by
\begin{equation}\label{Beta}
\beta_j(q):=(-1)^{j-r}\sum_{J:\, J\subseteq[m],\,\bar{r}(J)=r(J)=j}(-1)^{\#J}\tilde{d}_J(q).
\end{equation}

Inspired by Chen and Wang's work \cite{CW2012}, we examine the arithmetic properties of the coefficients $\beta_j(q)$ of $\chi^{\text{quasi}}\big(\mathcal{A}^{(B,\bm b)},q\big)$. For this purpose, the following lemmas are required.
\begin{lemma}\label{Invariant}
Let $U\in GL_n(\mathbb{Z})$ be a unimodular matrix, $A^*=AU$ and $B^*=BU$. Let $\mathcal{A}^*=\mathcal{A}(A^*,\bm a)$ denote the integral arrangement defined by the matrix $[A^*,\bm a]$, and $\mathcal{A}^{*(B^*,\bm b)}$  be the truncation of $\mathcal{A}^*$ by the matrix $[B^*,\bm b]$. Then, for all $q\in\mathbb{Z}_{>0}$, we have
\[
\#M\big(\mathcal{A}^{*(B^*,\bm b)},q\big)=\#M\big(\mathcal{A}^{(B,\bm b)},q\big).
\]
\end{lemma}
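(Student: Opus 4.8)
The plan is to produce an explicit bijection between the two complements induced by the linear change of coordinates $\bm x\mapsto U\bm x$ on $\mathbb{Z}_q^n$, and then simply compare cardinalities. The one fact that makes this work is that a unimodular matrix stays invertible after reduction modulo an arbitrary $q$.

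First I would record the elementary observation that, since $U\in GL_n(\mathbb{Z})$, the inverse $U^{-1}$ also has integer entries and $\det U=\pm1$, so $[U]_q\in GL_n(\mathbb{Z}_q)$ with $[U]_q^{-1}=[U^{-1}]_q$. Hence the map $\phi_q\colon\mathbb{Z}_q^n\to\mathbb{Z}_q^n$, $\bm x\mapsto[U]_q\bm x$, is a bijection. Next I would check that $\phi_q$ carries every object of $\mathcal{A}_q^{*(B^*,\bm b)}$ onto the corresponding object of $\mathcal{A}_q^{(B,\bm b)}$. Since reduction modulo $q$ is a ring homomorphism compatible with matrix multiplication, $A^*=AU$ and $B^*=BU$ give $[A^*]_q=[A]_q[U]_q$ and $[B^*]_q=[B]_q[U]_q$. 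Therefore, for $\bm x\in\mathbb{Z}_q^n$ one has $[B^*]_q\bm x=[\bm b]_q$ iff $[B]_q([U]_q\bm x)=[\bm b]_q$, i.e. $\bm x\in V_q(B^*,\bm b)$ iff $\phi_q(\bm x)\in V_q(B,\bm b)$; and, writing $\bm\alpha_i^{*T}$ for the $i$-th row of $A^*$, $[\bm\alpha_i^{*T}]_q\bm x=[a_i]_q$ iff $[\bm\alpha_i^{T}]_q([U]_q\bm x)=[a_i]_q$, i.e. $\bm x\in H_{i,q}^{*}$ iff $\phi_q(\bm x)\in H_{i,q}$. Intersecting the latter with the former shows $\phi_q$ maps $H_{i,q}^{*(B^*,\bm b)}$ onto $H_{i,q}^{(B,\bm b)}$ and $V_q(B^*,\bm b)$ onto $V_q(B,\bm b)$.

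Consequently $\phi_q$ restricts to a bijection from $M\big(\mathcal{A}_q^{*(B^*,\bm b)}\big)=V_q(B^*,\bm b)\setminus\bigcup_{i}H_{i,q}^{*(B^*,\bm b)}$ onto $M\big(\mathcal{A}_q^{(B,\bm b)}\big)=V_q(B,\bm b)\setminus\bigcup_{i}H_{i,q}^{(B,\bm b)}$ (in particular both complements are empty precisely when $V_q(B,\bm b)=\emptyset$, so the statement is vacuous there). Taking cardinalities yields $\#M\big(\mathcal{A}^{*(B^*,\bm b)},q\big)=\#M\big(\mathcal{A}^{(B,\bm b)},q\big)$ for every $q\in\mathbb{Z}_{>0}$.

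There is no genuine obstacle in this argument; it is essentially a change of variables. The only points requiring a moment's care are that $U^{-1}$ is integral (so that $[U]_q$ is a unit in $\mathcal{M}_{n\times n}(\mathbb{Z}_q)$ for \emph{every} $q$, not just those coprime to $\det U$), and that $q$-reduction commutes with matrix multiplication so that $[AU]_q=[A]_q[U]_q$ and $[BU]_q=[B]_q[U]_q$; both are immediate. This lemma will then serve as the standard device for replacing $[A,\bm a]$ and $[B,\bm b]$ by a more convenient column-equivalent representative (e.g. one in which $B$ is in a normal form) without changing any of the counting functions $\#M\big(\mathcal{A}_q^{(B,\bm b)}\big)$, hence without changing $\chi^{\text{quasi}}\big(\mathcal{A}^{(B,\bm b)},q\big)$ or its constituents.
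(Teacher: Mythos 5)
Your proof is correct and complete. It takes a genuinely different (and in fact more direct) route than the paper. The paper proves the lemma by running the inclusion--exclusion expansion $\#M=\sum_{J}(-1)^{\#J}\#\bigl(\bigcap_{j\in J}H_{j,q}^{(B,\bm b)}\bigr)$ for both arrangements and then showing term by term that each intersection has the same cardinality: since $[C_J\;\bm c_J]\begin{bmatrix}U&\bm 0\\\bm 0&1\end{bmatrix}=[C_JU\;\bm c_J]$ with both factors unimodular, the matrices $C_J$ and $C_JU$ (and their augmentations) share the same invariant factors, so \autoref{Solvable} and \autoref{Compute} give equal counts for the two solution sets. You instead exhibit the global bijection $\phi_q(\bm x)=[U]_q\bm x$ of $\mathbb{Z}_q^n$, note it matches up $V_q(B^*,\bm b)$ with $V_q(B,\bm b)$ and each $H_{i,q}^{*(B^*,\bm b)}$ with $H_{i,q}^{(B,\bm b)}$, and conclude by restricting to the complements. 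Your argument is more elementary (no elementary-divisor theory needed) and yields strictly more information, namely an explicit bijection between the complements rather than mere equality of cardinalities; you also correctly isolate the one nontrivial point, that $[U]_q$ is invertible over $\mathbb{Z}_q$ for \emph{every} $q$. What the paper's route buys is that it makes explicit the invariance of all the data $d_{J,j}$, $\bar d_{J,j}$, $r(J)$, $\bar r(J)$ under right multiplication by $U$, which is what is actually reused immediately afterwards when $[B\;A]^T$ is replaced by the column-reduced form $[B^\star\;A^\star\;\bm 0]^T$. Either proof is acceptable.
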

\begin{proof}
As with \eqref{Eq1}, we have 
\[
\#M\big(\mathcal{A}^{*(B^*,\bm b)},q\big)=\sum_{J\subseteq[m]}(-1)^{\#J}\#\Big(\bigcap_{j\in J}H_{j,q}^{(B^*,\bm b)}\Big).
\]
Comparing this and \eqref{Eq1}, it suffices to prove that
\[
 \#\Big(\bigcap_{j\in J}H_{j,q}^{(B^*,\bm b)}\Big)=\#\Big(\bigcap_{j\in J}H_{j,q}^{(B,\bm b)}\Big) \text{ for all } J\subseteq[m].
\]
Note that $\bigcap_{j\in J}H_{j,q}^{(B,\bm b)}$ is the solution set of $[C_J]_q\bm x=[\bm c_J]_q$. Thus, without loss of generality, we consider only the case $J=\emptyset$. The proof simplifies to verifying that
\[
\#V_q(B^*,\bm b)=\#V_q(B,\bm b).
\]
Notice that $[B,\bm b]\begin{bmatrix}U&\bm 0\\\bm 0&1\end{bmatrix}=[BU,\bm b]=[B^*,\bm b]$, and both matrices $U$ and $\begin{bmatrix}U&\bm 0\\\bm 0&1\end{bmatrix}$ are unimodular. This implies that both matrices $B$ and $B^*$ have identical invariant factors, and the augmented matrices $[B,\bm b]$ and $[B^*,\bm b]$ also share the same invariant factors. According to \autoref{Solvable}, we further deduce that $[\bm b]_q\in \Col_q(B)$ if and only if $[\bm b]_q\in \Col_q(B^*)$. Together with \autoref{Compute}, we obtain $\#V_q(B^*,\bm b)=\#V_q(B,\bm b)$. We finish the proof.
\end{proof}

Furthermore, we proceed to choose a unimodular matrix $U\in GL_n(\mathbb{Z})$ such that 
\[
\begin{bmatrix}B\\A\end{bmatrix}U=\begin{bmatrix}B^*\\A^*\end{bmatrix}=\begin{bmatrix}B^{\star}&\bm 0\\A^{\star}&\bm 0\end{bmatrix},
\]
where $\begin{bmatrix}B^{\star}\\A^{\star}\end{bmatrix}$ is a $(l+m)\times s$ integral matrix with rank $s$. Then the matrix $[A^{\star},\bm a]$ determines a hyperplane arrangement $\mathcal{A}^\star=\mathcal{A}(A^\star,\bm a)$ in $\mathbb{R}^s$. Analogously, we use $\mathcal{A}^{\star(B^\star,\bm b)}$ to represent the truncation of $\mathcal{A}^\star$ by the matrix $[B^\star,\bm b]$. Correspondingly, we have the following relations:
\[
V_q(B^*,\bm b)=V_q(B^\star,\bm b)\times \mathbb{Z}_q^{n-s},\quad H_{i,q}^*=H_{i,q}^\star\times\mathbb{Z}_q^{n-s},\quad H_{i,q}^{*(B^*,\bm b)}=H_{i,q}^{\star(B^\star,\bm b)}\times\mathbb{Z}_q^{n-s}.
\]
Immediately, we deduce
\[
V_q(B^*,\bm b)-\bigcup_{i=1}^mH_{i,q}^{*(B^*,\bm b)}=\Big(V_q(B^\star,\bm b)-\bigcup_{i=1}^mH_{i,q}^{\star(B^\star,\bm b)}\Big)\times\mathbb{Z}_q^{n-s}.
\]
It follows from \autoref{Invariant} that for all integers $q>q_0$,
\[
\chi^{\text{quasi}}\big(\mathcal{A}^{(B,\bm b)},q\big)=\chi^{\text{quasi}}\big(\mathcal{A}^{*(B^*,\bm b)},q\big)=\chi^{\text{quasi}}\big(\mathcal{A}^{\star(B^\star,\bm b)},q\big)q^{n-s}.
\]
When ${\rm rank}(B)={\rm rank}\begin{bmatrix}B\\ A\end{bmatrix}=s$, we immediately get
\[
\chi^{\text{quasi}}\big(\mathcal{A}^{(B,\bm b)},q\big)=\beta_s(q)q^{n-s}\quad\text{for all } q>q_0,
\]
where 
\begin{equation}\label{Coefficient}
\beta_s(q)=\sum_{J:\, J\subseteq[m],\,\bar{r}(J)=r(J)=s}(-1)^{\#J}\tilde{d}_J(q)=\chi^{\text{quasi}}\big(\mathcal{A}^{\star(B^\star,\bm b)},q\big).
\end{equation}
In the special case, we obtain the following key lemma.
\begin{lemma}\label{Key-Lemma}
Let ${\rm rank}(B)={\rm rank}\begin{bmatrix}B\\ A\end{bmatrix}=s$. Then $\chi^{\text{quasi}}\big(\mathcal{A}^{(B,\bm b)},q\big)=\beta_s(q)q^{n-s}$ for all integers $q>q_0$. Moreover, if integers $a,b>q_0$ with $a\mid b$ and $[\bm b]_b\in\Col_b(B)$, then $0\le\beta_s(a)\le\beta_s(b)$.
\end{lemma}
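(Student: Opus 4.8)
The plan is to reduce everything to a statement about the single coefficient $\beta_s(q)$, which by \eqref{Coefficient} equals $\chi^{\text{quasi}}\big(\mathcal{A}^{\star(B^\star,\bm b)},q\big)$, the characteristic quasi-polynomial of an arrangement living in the ambient space $\mathbb{R}^s$ where $B^\star$ has full column rank $s$. In that setting $V(B^\star,\bm b)$ is either empty or a single point; since we are assuming $[\bm b]_b\in\Col_b(B)$ (equivalently, by \autoref{Solvable}, the invariant factors of $[B,\bm b]_b$ agree with those of $[B,\bm 0]_b$), the reduced system is solvable modulo $b$ and hence also modulo $a$ because $a\mid b$. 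So in both the $a$- and $b$-cases the relevant solution sets $V_q(B^\star,\bm b)$ are nonempty, and $\beta_s(q) = \#M\big(\mathcal{A}_q^{\star(B^\star,\bm b)}\big)$ counts actual points.

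The first step is therefore to establish that $\beta_s(a)\ge 0$ and $\beta_s(b)\ge 0$: this is immediate once we know these quantities are cardinalities of complements, which follows from \autoref{Main1} (the counting formula \eqref{Counting-Formula}) applied to $\mathcal{A}^{\star(B^\star,\bm b)}$ together with the fact that $q>q_0$ kills all the $\bar r(J)=r(J)+1$ terms. The second and main step is the inequality $\beta_s(a)\le\beta_s(b)$. Here I would exhibit an injection from $M\big(\mathcal{A}_a^{\star(B^\star,\bm b)}\big)$ into $M\big(\mathcal{A}_b^{\star(B^\star,\bm b)}\big)$. Writing $b=ka$, the natural candidate is the map $\mathbb{Z}_a^s\to\mathbb{Z}_b^s$ sending $[\bm x]_a\mapsto [k\bm x_0 + a\,\tilde{\bm x}]_b$ for a suitable fixed lift — more precisely, one fixes a particular solution $\bm x_0$ of $B^\star\bm x=\bm b\pmod b$, notes that $[k\bm x_0]_b$ solves $B^\star\bm x = [k\bm b]_b$, and corrects by an element of $V_b(B^\star,\bm 0)$ congruent to the given point mod $a$. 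The key verification is that this map sends the complement into the complement: if $\bm x$ avoids every $H_{i,a}^\star$, i.e. $\bm\alpha_i^T\bm x\not\equiv a_i\pmod a$ for all $i$, then the image must avoid every $H_{i,b}^\star$ modulo $b$ — this requires that the construction be arranged so that $\bm\alpha_i^T(\text{image})\equiv a_i\pmod b$ forces $\bm\alpha_i^T\bm x\equiv a_i\pmod a$, which is where one leans on $\gcd$-compatibility between $a$, $b$ and the invariant factors of $B^\star$ (or, more cleanly, on the structure provided by the Smith normal form of $B^\star$ and of the full matrix $C^\star = \begin{bmatrix}B^\star\\A^\star\end{bmatrix}$).

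I expect the main obstacle to be precisely the construction and verification of this injection in the non-central case: unlike the central case of \cite{CW2012}, where multiplication-by-$k$ is a clean group homomorphism $\mathbb{Z}_a^s\to\mathbb{Z}_b^s$ respecting every central hyperplane, here the affine shift by $\bm b$ means the lifts must be chosen consistently across all the $H_i^\star$ simultaneously, and one must check that solvability mod $b$ (hypothesis $[\bm b]_b\in\Col_b(B)$) really does propagate correctly. An alternative — possibly cleaner — route to the same inequality is purely arithmetic: use \eqref{Coefficient} to write $\beta_s(q)$ as $\sum_{J:\,\bar r(J)=r(J)=s}(-1)^{\#J}\tilde d_J(q)$, observe that each $\tilde d_J$ is multiplicative/monotone in the sense that $a\mid b$ implies $\gcd(a,d_{J,j})\mid\gcd(b,d_{J,j})$ hence $d_J(a)\mid d_J(b)$ and in particular $d_J(a)\le d_J(b)$, and that the solvability indicator can only switch "on" (never "off") as we pass from $a$ to $b$ under the hypothesis $[\bm b]_b\in\Col_b(B)$; then pair the terms so that the alternating sum telescopes into something manifestly between $\beta_s(a)$ and $\beta_s(b)$. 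Either way, the crux is showing the inequality term-by-term survives the alternating signs, and I would present whichever of the two arguments (bijective or arithmetic) makes that survival most transparent, most likely the bijective one since it also immediately re-proves $\beta_s(a)\ge 0$.
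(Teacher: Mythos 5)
Your high-level plan matches the paper's: reduce via \eqref{Coefficient} to the case of full column rank (so $\beta_s(q)$ is literally a cardinality, hence $\ge 0$), and obtain $\beta_s(a)\le\beta_s(b)$ by exhibiting an injection from the complement modulo $a$ into the complement modulo $b$. That identification of the strategy is correct. However, both of the detailed routes you sketch have genuine gaps.

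On the injection route, the map $[\bm x]_a\mapsto[k\bm x_0+a\tilde{\bm x}]_b$ as written does not land in the right set: from $B^\star\bm x_0\equiv\bm b\pmod b$ one gets $B^\star(k\bm x_0)\equiv k\bm b\pmod b$, not $\bm b$. The paper instead first translates everything by a fixed solution $\bm x_0$, passing to the arrangement $\mathcal{A}^0=\mathcal{A}(A,\bm a-A\bm x_0)$ truncated by $[B,\bm 0]$; only then is the clean multiplication-by-$c$ map $\phi\colon V_a(B,\bm 0)\to V_b(B,\bm 0)$, $[\bm x]_a\mapsto[c\bm x]_b$ available (here $b=ac$). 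Crucially, after this translation $\bm b$ is zero but $\bm a$ is in general still nonzero, so the arrangement is not central, and the verification that $\phi$ carries complement to complement is the real work. It is not a $\gcd$-compatibility fact, as you suggest, but an application of $b>q_0$: assuming $[c\bm x]_b\in H_{i,b}^{(B,\bm 0)}$, the paper splits into the cases $a_i\equiv 0\pmod b$ (where one divides by $c$ to contradict $[\bm x]_a\notin H_{i,a}^{(B,\bm 0)}$) and $a_i\not\equiv 0\pmod b$ (where the augmented matrix acquires an extra invariant factor $\bar d_{\{i\},n+1}$, and $b>q_0\ge\bar d_{\{i\},n+1}$ forces the defining system to be unsolvable mod $b$ by \autoref{Solvable}, a contradiction). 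You correctly label this step as the crux, but you neither carry it out nor locate where $b>q_0$ enters.

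On the purely arithmetic alternative, your premise that the solvability indicator "can only switch on (never off) as we pass from $a$ to $b$" is backwards. Since $a\mid b$, reducing a solution mod $a$ shows that solvability of $C_J\bm x\equiv\bm c_J\pmod b$ implies solvability mod $a$, not conversely; so the indicator can switch off, not on, and the proposed term-by-term or telescoping comparison of the alternating sum does not go through as stated.
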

\begin{proof}
It remains to verify the second part. By \eqref{Coefficient}, without loss of generality, we may assume $s=n$. Thus, this proof reduces to proving that
\[
\beta_n(a)=\chi^{\text{quasi}}\big(\mathcal{A}^{(B,\bm b)},a\big)\le \chi^{\text{quasi}}\big(\mathcal{A}^{(B,\bm b)},b\big)=\beta_n(b).
\]
Since $a\mid b$, we write $b=ac$ for a positive integer $c$. Combining $[\bm b]_b\in\Col_b(B)$,  we have $[\bm b]_a\in\Col_a(B)$ by \autoref{Solvable}. It follows that there exists some $\bm x_0\in\mathbb{Z}^n$ such that $[B]_b[\bm x_0]_b=[\bm b]_b$. Clearly, $[\bm x_0]_a$ is also a solution of  $[B]_a\bm x=[\bm b]_a$. Now consider the new arrangement $\mathcal{A}^0=\mathcal{A}(A,\bm a-A\bm x_0)$ defined by the matrix $[A,\bm a-A\bm x_0]$ and its truncation $\mathcal{A}^{0(B,\bm b-B\bm x_0)}$  induced by the matrix $[B,\bm b-B\bm x_0]$. It is straightforward to check that for $q=a,b$, the following map
\[
\varphi_q:V_q(B,\bm b)\to V_q(B,\bm b-B\bm x_0),\quad [\bm x]_q\mapsto \varphi_q\big([\bm x]_q\big)=[\bm x-\bm x_0]_q
\]
is a bijection. Furthermore, it is easy to see that for $q=a,b$,
\[
\varphi_q\big(M(\mathcal{A}_q^{(B,\bm b)})\big)=M\big(\mathcal{A}_q^{0(B,\bm b-B\bm x_0)}\big).
\]
Note $[\bm b-B\bm x_0]_q=[\bm 0]_q$ for $q=a,b$. Hence, without loss of generality, we may assume $\bm b=\bm 0$. Hence, the proof further reduces to proving that
\[
\chi^{\text{quasi}}\big(\mathcal{A}^{(B,\bm 0)},a\big)\le \chi^{\text{quasi}}\big(\mathcal{A}^{(B,\bm 0)},b\big).
\]

Notice the fact that for any $\bm x\in\mathbb{Z}^n$, if $B\bm x\equiv\bm 0\pmod{a}$,  then $B(c\bm x)\equiv\bm 0\pmod{b}$ because $b=ac$. Naturally, we consider the following map
\[
\phi:V_a(B,\bm 0)\to V_b(B,\bm 0),\quad [\bm x]_a\mapsto\phi\big([\bm x]_a\big)=[c\bm x]_{b}.
\]
We now verify that $\phi$ is injective by contradiction. Suppose that there exist  two distinct elements $[\bm x]_a$ and  $[\bm y]_a$ in $V_a(B,\bm 0)$ such that $\phi\big([\bm x]_a\big)=\phi\big([\bm y]_a\big)$, i.e., $[c\bm x]_b=[c\bm y]_b$. Then $c(\bm x-\bm y)=b\bm z$ for some $\bm z\in\mathbb{Z}^n$. Substituting $b=ac$, we get $\bm x-\bm y=a\bm z$. Thus, $[\bm x]_a=[\bm y]_a$, which contradicts our initial assumption that $[\bm x]_a\ne[\bm y]_a$. Therefore, $\phi$ is an injection.

Next, we further show that the injective map $\phi$ induces an injection from $M\big(\mathcal{A}_a^{(B,\bm 0)}\big)$ to $M\big(\mathcal{A}_b^{(B,\bm 0)}\big)$. It remains to prove that $\phi\big([\bm x]_a\big)\in V_b(B,\bm 0)-\bigcup_{i=1}^mH_{i,b}^{(B,\bm 0)}$ whenever $[\bm x]_a\in V_a(B,\bm 0)-\bigcup_{i=1}^mH_{i,a}^{(B,\bm 0)}$. Arguing by contradiction, suppose there exists an element $[\bm x]_a\in V_a(B,\bm 0)-\bigcup_{i=1}^mH_{i,a}^{(B,\bm 0)}$ such that $\phi\big([\bm x]_a\big)=[c\bm x]_b\in H_{i,b}^{(B,\bm 0)}$ for some $i\in[m]$, with $H_i:\bm\alpha_i^T\bm x=a_i$ being the corresponding hyperplane in $\mathcal{A}$. We proceed by considering two cases: $a_i\equiv 0\pmod{b}$ and $a_i\not\equiv 0\pmod{b}$. For the former case, $\bm\alpha^T_i(c\bm x)=b\bm z=ac\bm z$ for some $\bm z\in\mathbb{Z}$ via $[c\bm x]_b\in H_{i,b}^{(B,\bm 0)}$ implies $\bm\alpha^T_i\bm x=a\bm z$. Consequently, $[\bm x]_a\in H_{i,a}^{(B,\bm 0)}$, a contradiction. For the latter case, $a_i\not\equiv 0\pmod{b}$ means
\[
\rank\begin{bmatrix}
B & \bm 0\\
c\bm\alpha^T_i & a_i
\end{bmatrix}
=\rank(B)+1=\rank\begin{bmatrix}B\\{c\bm\alpha^T_i}\end{bmatrix}+1=n+1.
\]
This means that $\bar{d}_{\{i\},n+1}\ne 0$, but $d_{\{i\},n+1}=0$. Since $b>q_0\ge \bar{d}_{\{i\},n+1}$, it further implies that 
$\begin{bmatrix}
B & \bm 0\\
c\bm\alpha^T_i & a_i
\end{bmatrix}_b$ 
has $n+1$ invariant factors, whereas  $\begin{bmatrix}B\\{c\bm\alpha^T_i}\end{bmatrix}_b$ has only $n$ invariant factors. It follows from \autoref{Solvable} that $\begin{bmatrix}
B\\ c\bm\alpha^T_i
\end{bmatrix}_b\bm y=\begin{bmatrix}
\bm 0\\ a_i
\end{bmatrix}_b$
has no solution. This contradicts the assumption that $[c\bm x]_b\in H_{i,b}^{(B,\bm 0)}$. In conclusion, $\phi$ indeed induces an injective map from $M(\mathcal{A}_a^{(B,\bm 0)})$ to $M(\mathcal{A}_b^{(B,\bm 0)})$. Consequently, we obtain  $\chi^{\text{quasi}}\big(\mathcal{A}^{(B,\bm 0)},a\big)\le \chi^{\text{quasi}}\big(\mathcal{A}^{(B,\bm 0)},b\big)$. We complete the proof.
\end{proof}

Recall that the integral arrangement $\mathcal{A}$ is defined by the matrix $[A,\bm a]$. Let $H$ be a hyperplane of $\mathcal{A}$ with the defining equation $\bm\alpha^T_i\bm x=a_i$, where $(\bm\alpha^T_i,a_i)$ is a row vector of the defining matrix $[A,\bm a]$ of $\mathcal{A}$. Let $[A',\bm a']$ denote the matrix obtained from $[A,\bm a]$ by deleting the row $(\bm\alpha^T_i,a_i)$, and let $[B',\bm b']$ denote the matrix obtained by adding the row $(\bm\alpha^T_i,a_i)$ to the bottom of $[B,\bm b]$. Then $[A',\bm\alpha']$ is the defining matrix of the integral arrangement $\mathcal{A}':=\mathcal{A}-\{H\}$. The arrangement $\mathcal{A}'$ is truncated by the matrices $[B,\bm b]$ and $[B',\bm b']$ to form the truncated arrangements $\mathcal{A}'^{(B,\bm b)}$ and $\mathcal{A}'^{(B',\bm b')}$, respectively. We observe the following deletion-restriction recurrence for the characteristic quasi-polynomial.
\begin{proposition}[Deletion-restriction formula] \label{DRF}For all integers $q>q_0$, with the above notations, we have
\[
\chi^{\text{quasi}}\big(\mathcal{A}^{(B,\bm b)},q\big)=\chi^{\text{quasi}}\big(\mathcal{A}'^{(B,\bm b)},q\big)-\chi^{\text{quasi}}\big(\mathcal{A}'^{(B',\bm b')},q\big).
\]
\end{proposition}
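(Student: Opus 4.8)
The plan is to prove the deletion–restriction recurrence by a simple point-counting argument at a fixed modulus $q > q_0$, and then invoke \autoref{Main1} to pass from the equality of cardinalities to the equality of the corresponding constituents (hence of the quasi-polynomials, since they agree for all $q>q_0$). Fix $q>q_0$ and write $H = H_i$ for the deleted hyperplane, with defining row $(\bm\alpha_i^T,a_i)$. The key observation is a set-theoretic decomposition inside $V_q(B,\bm b)$: a point $\bm x \in V_q(B,\bm b)$ lies in $M\big(\mathcal{A}_q^{(B,\bm b)}\big)$ if and only if it avoids every hyperplane of $\mathcal{A}_q$ \emph{including} $H_{i,q}$, whereas a point of $M\big(\mathcal{A}'^{(B,\bm b)}_q\big)$ avoids every hyperplane of $\mathcal{A}_q$ \emph{except possibly} $H_{i,q}$. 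Therefore
\[
M\big(\mathcal{A}'^{(B,\bm b)}_q\big) = M\big(\mathcal{A}_q^{(B,\bm b)}\big) \ \sqcup\ \Big( M\big(\mathcal{A}'^{(B,\bm b)}_q\big)\cap H_{i,q}\Big),
\]
a disjoint union. Now $M\big(\mathcal{A}'^{(B,\bm b)}_q\big)\cap H_{i,q}$ is precisely the set of points $\bm x$ satisfying $[B]_q\bm x=[\bm b]_q$ and $[\bm\alpha_i^T]_q\bm x = [a_i]_q$ (i.e.\ $[B']_q\bm x = [\bm b']_q$) which avoid all the hyperplanes $H_{j,q}$ for $j\in[m]\setminus\{i\}$ --- that is exactly $M\big(\mathcal{A}'^{(B',\bm b')}_q\big)$, with the same ambient list of hyperplanes $\{H_j : j\ne i\}$. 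Taking cardinalities gives
\[
\#M\big(\mathcal{A}'^{(B,\bm b)}_q\big) = \#M\big(\mathcal{A}_q^{(B,\bm b)}\big) + \#M\big(\mathcal{A}'^{(B',\bm b')}_q\big),
\]
which rearranges to the claimed identity at level of cardinalities.

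To upgrade this to the stated identity for the characteristic quasi-polynomials, I would note that for all integers $q>q_0$ (the same threshold works simultaneously for $\mathcal{A}$, $\mathcal{A}'$, and $\mathcal{A}'^{(B',\bm b')}$: indeed $q_0$ is a maximum over a set of invariant factors $\bar d_{J,\bar r(J)}$ indexed by subsets $J$, and deleting a hyperplane only shrinks this index set while moving a row from $A$ into $B$ merely relabels which rows are "below the bar" --- in all cases the relevant invariant factors already appear among those defining the original $q_0$, so each of the three quasi-polynomials equals the corresponding counting function on all $q>q_0$), \autoref{Main1} identifies each $\chi^{\text{quasi}}$ with $\#M(\cdot_q)$. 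Hence the three-term cardinality identity just established holds with each term replaced by the respective characteristic quasi-polynomial, for every $q>q_0$; since two quasi-polynomials that agree on all sufficiently large integers are equal as quasi-polynomials, we obtain
\[
\chi^{\text{quasi}}\big(\mathcal{A}^{(B,\bm b)},q\big)=\chi^{\text{quasi}}\big(\mathcal{A}'^{(B,\bm b)},q\big)-\chi^{\text{quasi}}\big(\mathcal{A}'^{(B',\bm b')},q\big).
\]

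The only genuinely delicate point is the bookkeeping around $q_0$: one must check that the threshold $q_0(\bar C)$ attached to the original pair $([A,\bm a],[B,\bm b])$ is large enough to serve all three arrangements appearing in the recurrence, so that \autoref{Main1} applies uniformly and no separate "large $q$" hypotheses for $\mathcal{A}'$ or $\mathcal{A}'^{(B',\bm b')}$ leak in. This is where I would spend the most care: for $\mathcal{A}'^{(B,\bm b)}$ the subset index set is $\{J : J\subseteq [m]\setminus\{i\}\}$, a subset of the original; for $\mathcal{A}'^{(B',\bm b')}$ the matrices $C_J$ and $\bar C_J$ become $\begin{bmatrix}B\\ \bm\alpha_i^T\\ A_J\end{bmatrix}$ and its augmentation, which for $J\subseteq[m]\setminus\{i\}$ coincide (up to row permutation, which does not affect rank or invariant factors) with the original $C_{J\cup\{i\}}$ and $\bar C_{J\cup\{i\}}$. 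Thus every invariant factor entering the $q_0$ of either auxiliary arrangement already occurs among those entering $q_0(\bar C)$, so the common threshold $q_0$ in the statement is legitimate. Everything else is the elementary inclusion–exclusion decomposition above, which requires no computation beyond the disjointness observation.
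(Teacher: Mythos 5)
Your proof is correct and follows the same route as the paper: the identical disjoint-union decomposition of $M\big(\mathcal{A}'^{(B,\bm b)}_q\big)$ followed by an appeal to \autoref{Main1} to identify the three counting functions with their quasi-polynomials for $q>q_0$. The one place where you go beyond the paper's terse wording is the explicit check that the single threshold $q_0(\bar C)$ dominates the thresholds of both auxiliary arrangements (because deleting $H_i$ shrinks the index set of subsets $J$, while moving the row into $B'$ turns the data for $J\subseteq[m]\setminus\{i\}$ into a row permutation of the original data for $J\cup\{i\}$); the paper takes this for granted, so your added bookkeeping is a welcome clarification rather than a different argument.
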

\begin{proof}
Without loss of generality, we assume $H=H_1$. We first prove that the complement of $\mathcal{A}'^{(B,\bm b)}$ can be decomposed into a disjoint union of the complement of $\mathcal{A}^{(B,\bm b)}$ and that of $\mathcal{A}'^{(B',\bm b')}$. Note that $\bm x\in M\big(\mathcal{A}_q'^{(B,\bm b)}\big)$ is equivalent to $\bm x\in V_q(B,\bm b)$ and $\bm x\notin H_{i,q}$ for all $i=2,\ldots,m$. This is further equivalent to either $\bm x\in V_q(B,\bm b)$ and $\bm x\notin H_{i,q}$ for all $i=1,2\ldots,m$, or $\bm x\in V_q(B,\bm b)$ and $\bm x\notin H_{i,q}$ for all $i=2\ldots,m$ but $\bm x\in H_{1,q}$. Consequently, we have
\[
\#M\big(\mathcal{A}'^{(B,\bm b)},q\big)=\#M\big(\mathcal{A}^{(B,\bm b)},q\big)+\#M\big(\mathcal{A}'^{(B',\bm b')},q\big).
\]
It follows from \autoref{Main1} that for integers $q>q_0$, $\#M\big(\mathcal{A}'^{(B',\bm b')},q\big)$ and $\#M\big(\mathcal{A}'^{(B,\bm b)},q\big)$ can be uniformly described by the quasi-polynomials $\chi^{\text{quasi}}\big(\mathcal{A}'^{(B',\bm b')},q\big)$ and $\chi^{\text{quasi}}\big(\mathcal{A}'^{(B,\bm b)},q\big)$ respectively,  which share the common period $\rho_C$. This implies that the result holds.
\end{proof}

To present our results conveniently and naturally, we introduce the concept of combinatorial equivalence for all positive integers. Two positive integers $a$ and $b$ are said to be {\bf combinatorially equivalent} with respect to $\mathcal{A}^{(B,\bm b)}$, denoted $a\stackrel{\mathcal{A}^{(B,\bm b)}}\sim b$, if 
\[
\bigcap_{j\in J}H_{j,a}^{(B,\bm b)}\ne\emptyset\iff \bigcap_{j\in J}H_{j,b}^{(B,\bm b)}\ne\emptyset,\quad\forall\, J\subseteq[m]\text{ with }\bar{r}(J)=r(J).
\]
Clearly, the relation $\stackrel{\mathcal{A}^{(B,\bm b)}}\sim$ is an equivalence relation on positive integers. The next lemma states that positive integers $a$ and $b$ are combinatorially equivalent whenever ${\rm gcd}(a,\rho_C)={\rm gcd}(b,\rho_C)$. This serves as a bridge connecting the combinatorial equivalence relation between positive integers and their greatest common divisor with $\rho_C$, as presented in \autoref{GCD2}.

\begin{lemma}\label{GCD1}
Let $a,b\in\mathbb{Z}_{>0}$. If ${\rm gcd}(a,\rho_C)={\rm gcd}(b,\rho_C)$, then $a\stackrel{\mathcal{A}^{(B,\bm b)}}\sim b$. 
\end{lemma}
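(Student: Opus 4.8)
The plan is to reduce the combinatorial-equivalence condition to a statement purely about when the relevant linear systems are solvable modulo $a$ versus modulo $b$, and then to observe that solvability of $[C_J]_q\bm x=[\bm c_J]_q$ depends on $q$ only through $\gcd(q,\rho_C)$ for the indices $J$ that matter. Concretely, fix $J\subseteq[m]$ with $\bar r(J)=r(J)$. By \autoref{Solvable}, the system $[C_J]_q\bm x=[\bm c_J]_q$ is solvable (equivalently $\bigcap_{j\in J}H_{j,q}^{(B,\bm b)}\neq\emptyset$) if and only if the invariant factors of $[\bar C_J]_q$ coincide with those of $[C_J,\bm 0]_q$. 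Using the description \eqref{eq:invariant factor} of invariant factors over $\mathbb Z_q$ in terms of those over $\mathbb Z$, this condition becomes
\[
\gcd\big(q,d_{J,j}(C_J)\big)=\gcd\big(q,\bar d_{J,j}(\bar C_J)\big)\quad\text{for all }j=1,\ldots,\bar r(J),
\]
with the convention $d_{J,\bar r(J)}=0$ when $\bar r(J)=r(J)+1$ — but here $\bar r(J)=r(J)$, so all the $d_{J,j}$ are genuinely nonzero, and this is exactly the condition $\tilde d_J(q)=d_J(q)\neq 0$ from the definition of $\tilde d_J$.

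The key point is then the divisibility input: since $\bar r(J)=r(J)$, by the Interlacing Divisibility Theorem (\autoref{IDT}, applied row by row to build $\bar C_J$ from $C_J$, or directly to the transposes) every invariant factor $d_{J,j}$ and $\bar d_{J,j}$ for $j\le r(J)$ divides the maximal invariant factor $d_{J,r(J)}$, which in turn divides $\rho_C$ by the very definition $\rho_C={\rm lcm}\big(d_{J,r(J)}:J\subseteq[m],\,\bar r(J)=r(J)\big)$. Hence for any positive integer $q$ and any such $J$,
\[
\gcd(q,d_{J,j})=\gcd\big(\gcd(q,\rho_C),d_{J,j}\big)\quad\text{and}\quad\gcd(q,\bar d_{J,j})=\gcd\big(\gcd(q,\rho_C),\bar d_{J,j}\big),
\]
because $d_{J,j}\mid\rho_C$ implies that reducing $q$ modulo $\rho_C$ does not change its gcd with $d_{J,j}$ (this is the same elementary observation used in the proof of \autoref{Main2}(2)). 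Consequently the truth value of the solvability condition above — and therefore whether $\bigcap_{j\in J}H_{j,q}^{(B,\bm b)}$ is empty — depends on $q$ only through $\gcd(q,\rho_C)$, for every $J$ with $\bar r(J)=r(J)$.

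With this in hand the lemma is immediate: assuming $\gcd(a,\rho_C)=\gcd(b,\rho_C)$, for each $J\subseteq[m]$ with $\bar r(J)=r(J)$ we get $\bigcap_{j\in J}H_{j,a}^{(B,\bm b)}\neq\emptyset\iff\bigcap_{j\in J}H_{j,b}^{(B,\bm b)}\neq\emptyset$, which is precisely the defining condition of $a\stackrel{\mathcal{A}^{(B,\bm b)}}\sim b$. I do not anticipate a serious obstacle here; the only point requiring a little care is the bookkeeping that ties together three equivalent formulations — nonemptiness of $\bigcap_{j\in J}H_{j,q}^{(B,\bm b)}$, solvability of $[C_J]_q\bm x=[\bm c_J]_q$, and equality of invariant factors of $[\bar C_J]_q$ and $[C_J,\bm 0]_q$ — and then correctly invoking $d_{J,j}\mid\rho_C$ (which itself rests on \autoref{IDT} together with the definition of $\rho_C$) to collapse the $q$-dependence to $\gcd(q,\rho_C)$. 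One should also note explicitly that when $l=0$ or $[m]=\emptyset$ the statement is vacuous or trivial, so no edge case is lost.
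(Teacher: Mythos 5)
Your proof is correct and follows essentially the same route as the paper: both characterize nonemptiness of $\bigcap_{j\in J}H_{j,q}^{(B,\bm b)}$ via \autoref{Solvable} as the gcd condition $\gcd(q,d_{J,j})=\gcd(q,\bar d_{J,j})$, and both then invoke $d_{J,j}\mid\rho_C$ (using \autoref{IDT} for $\bar d_{J,j}\mid\rho_C$) to show these gcd's depend on $q$ only through $\gcd(q,\rho_C)$. The paper's version is just more compressed, stating the key identity $\gcd(a,d)=\gcd(\gcd(a,\rho_C),d)$ for $d\mid\rho_C$ up front.
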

\begin{proof}
Note that  ${\rm gcd}(d,\rho_C)=d$ for any positive integer $d$ dividing $\rho_C$. By ${\rm gcd}(a,\rho_C)={\rm gcd}(b,\rho_C)$, we have
\[
{\rm gcd}(a,d)={\rm gcd}\big({\rm gcd}(a,\rho_C),d\big)={\rm gcd}\big({\rm gcd}(b,\rho_C),d\big)={\rm gcd}(b,d).
\]
Therefore, for all $J\subseteq[m]$ with $\bar{r}(J)=r(J)$ and $1\le j\le r(J)$, we obtain 
\[
{\rm gcd}(a,d_{J,j})={\rm gcd}(b,d_{J,j})\quad\And\quad{\rm gcd}(a,\bar{d}_{J,j})={\rm gcd}(b,\bar{d}_{J,j}).
\]
It follows from \autoref{Solvable} that for any such $J$ and $q=a,b$,
\[
\bigcap_{j\in J}H_{j,q}^{(B,\bm b)}\neq\emptyset \iff {\rm gcd}(q,d_{J,j})={\rm gcd}(q,\bar{d}_{J,j})\text{ for all }1\le j\le r(J).
\]
Thus, $a$ and $b$ are combinatorially equivalent with respect to $\mathcal{A}^{(B,\bm b)}$.
\end{proof}
\begin{proposition}\label{GCD2}
Let $a,b\in\mathbb{Z}_{>0}$. Then $a\stackrel{\mathcal{A}^{(B,\bm b)}}\sim b$ if and only if  ${\rm gcd}(a,\rho_C)\stackrel{\mathcal{A}^{(B,\bm b)}}\sim {\rm gcd}(b,\rho_C)$.
\end{proposition}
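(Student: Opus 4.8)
The plan is to obtain this proposition as an immediate formal consequence of \autoref{GCD1} together with the fact, already noted, that $\stackrel{\mathcal{A}^{(B,\bm b)}}\sim$ is an equivalence relation on $\mathbb{Z}_{>0}$. Throughout I would abbreviate $\stackrel{\mathcal{A}^{(B,\bm b)}}\sim$ by $\sim$.

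First I would record the elementary identity that drives everything: for any $a\in\mathbb{Z}_{>0}$, the integer ${\rm gcd}(a,\rho_C)$ divides $\rho_C$, hence ${\rm gcd}\big({\rm gcd}(a,\rho_C),\rho_C\big)={\rm gcd}(a,\rho_C)$. Applying \autoref{GCD1} to the pair $a$ and ${\rm gcd}(a,\rho_C)$ --- which is legitimate precisely because these two numbers have the same greatest common divisor with $\rho_C$ --- then gives $a\sim{\rm gcd}(a,\rho_C)$, and symmetrically $b\sim{\rm gcd}(b,\rho_C)$.

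With these two relations in hand, both directions reduce to a one-line chase through the equivalence relation. If $a\sim b$, then ${\rm gcd}(a,\rho_C)\sim a\sim b\sim{\rm gcd}(b,\rho_C)$, so transitivity yields ${\rm gcd}(a,\rho_C)\sim{\rm gcd}(b,\rho_C)$. Conversely, if ${\rm gcd}(a,\rho_C)\sim{\rm gcd}(b,\rho_C)$, then $a\sim{\rm gcd}(a,\rho_C)\sim{\rm gcd}(b,\rho_C)\sim b$, whence $a\sim b$; here I use symmetry to turn $a\sim{\rm gcd}(a,\rho_C)$ into ${\rm gcd}(a,\rho_C)\sim a$ where needed.

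I do not expect any genuine obstacle here: the entire arithmetic and linear-algebraic substance has already been absorbed into \autoref{GCD1} (which itself rests on \autoref{Solvable} and the invariant-factor criterion for solvability). The one point that really must be verified is the applicability of \autoref{GCD1} to the pair $a$ and ${\rm gcd}(a,\rho_C)$, i.e.\ the idempotency ${\rm gcd}\big({\rm gcd}(a,\rho_C),\rho_C\big)={\rm gcd}(a,\rho_C)$ recorded in the first step, which is routine.
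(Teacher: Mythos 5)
Your proposal is correct and takes essentially the same route as the paper: apply \autoref{GCD1} to the pairs $\big(a,{\rm gcd}(a,\rho_C)\big)$ and $\big(b,{\rm gcd}(b,\rho_C)\big)$, then chain through the equivalence relation. You spell out the idempotency ${\rm gcd}\big({\rm gcd}(a,\rho_C),\rho_C\big)={\rm gcd}(a,\rho_C)$ a bit more explicitly than the paper does, but this is the same argument.
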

\begin{proof}
As a direct result of \autoref{GCD1}, we have
\[
a\stackrel{\mathcal{A}^{(B,\bm b)}}\sim {\rm gcd}(a,\rho_C)\quad\And\quad b\stackrel{\mathcal{A}^{(B,\bm b)}}\sim {\rm gcd}(b,\rho_C).
\]
Since $\stackrel{\mathcal{A}^{(B,\bm b)}}\sim$ is an equivalence relation, the result follows immediately.
\end{proof}

After collecting the necessary concepts and lemmas, we now state our main result. The following theorem establishes a unified comparison between the unsigned coefficients of characteristic quasi-polynomials based on the divisibility relation.
\begin{theorem}\label{Main3}
For any integers $a,b>q_0$, if ${\rm gcd}(a,\rho_C)\mid{\rm gcd}(b,\rho_C)$ and $a\stackrel{\mathcal{A}^{(B,\bm b)}}\sim b$, then 
\begin{equation}\label{eq:compare beta}
0\le \beta_j(a)\le\beta_j(b),\quad\forall\, r\le j\le s.
\end{equation}
Moreover,  when $\bm a=\bm 0$ and $\bm b=\bm 0$, for all $a,b\in\mathbb{Z}_{>0}$, if ${\rm gcd}(a,\rho_C)\mid{\rm gcd}(b,\rho_C)$, then
\[
0\le \beta_j(a)\le\beta_j(b),\quad\forall\, r\le j\le s.
\]
\end{theorem}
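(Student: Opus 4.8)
The plan is to reduce to the divisibility hypothesis $a\mid b$ and then induct on the number $m$ of rows of $A$, using \autoref{Key-Lemma} as the base case and \autoref{DRF} to drive the inductive step. For the reduction, first note that $\beta_j(q)$ depends on $q$ only through $\gcd(q,\rho_C)$: in \eqref{Beta} every index $J$ satisfies $\bar r(J)=r(J)$, so $d_{J,k}\mid d_{J,r(J)}\mid\rho_C$ and, by \autoref{IDT}, $\bar d_{J,k}\mid d_{J,r(J)}\mid\rho_C$ for $1\le k\le r(J)$; hence $\gcd(q,d_{J,k})=\gcd(\gcd(q,\rho_C),d_{J,k})$ and similarly for $\bar d_{J,k}$, so each $\tilde d_J(q)$, and thus $\beta_j(q)$, is a function of $\gcd(q,\rho_C)$ alone. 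Now given $a,b>q_0$ with $\gcd(a,\rho_C)\mid\gcd(b,\rho_C)$ and $a\stackrel{\mathcal{A}^{(B,\bm b)}}\sim b$, pick a prime $p>q_0$ with $p\nmid\rho_C$ and set $a^\circ=\gcd(a,\rho_C)\,p$, $b^\circ=\gcd(b,\rho_C)\,p$. Then $a^\circ\mid b^\circ$, $a^\circ,b^\circ>q_0$, $\gcd(a^\circ,\rho_C)=\gcd(a,\rho_C)$ and $\gcd(b^\circ,\rho_C)=\gcd(b,\rho_C)$, so $\beta_j(a^\circ)=\beta_j(a)$, $\beta_j(b^\circ)=\beta_j(b)$, and by \autoref{GCD1} with transitivity of $\stackrel{\mathcal{A}^{(B,\bm b)}}\sim$ one gets $a^\circ\stackrel{\mathcal{A}^{(B,\bm b)}}\sim b^\circ$. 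Hence it suffices to prove \eqref{eq:compare beta} assuming $a\mid b$, $a,b>q_0$ and $a\stackrel{\mathcal{A}^{(B,\bm b)}}\sim b$; this also yields the ``moreover'' clause, since for $\bm a=\bm 0$, $\bm b=\bm 0$ one has $\bar r(J)=r(J)$ and $\bm 0\in\bigcap_{j\in J}H_{j,q}^{(B,\bm 0)}$ for all $J,q$, so $q_0=0$ and any two positive integers are combinatorially equivalent.

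For the base case $r=s$, the only coefficient is $\beta_s$, and \autoref{Key-Lemma} gives $\chi^{\text{quasi}}\big(\mathcal{A}^{(B,\bm b)},q\big)=\beta_s(q)q^{n-s}$ for $q>q_0$, so $\beta_s(a),\beta_s(b)\ge0$. If ${\rm rank}[B,\bm b]>r$ then $r(J)=r<{\rm rank}[B,\bm b]\le\bar r(J)$ for every $J$, so no $J$ appears in \eqref{CQP} and $\beta_s\equiv0$. Otherwise ${\rm rank}[B,\bm b]=r$ and $\emptyset$ appears in that index set; if $[\bm b]_b\in\Col_b(B)$ then \autoref{Key-Lemma} gives $0\le\beta_s(a)\le\beta_s(b)$ directly, and if $[\bm b]_b\notin\Col_b(B)$ then $V_b(B,\bm b)=\emptyset$, so $\beta_s(b)=0$, while combinatorial equivalence applied to $J=\emptyset$ forces $V_a(B,\bm b)=\emptyset$, so $\beta_s(a)=0$ too. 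This settles $r=s$, in particular $m=0$.

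For the inductive step $r<s$, since ${\rm rank}\begin{bmatrix}B\\A\end{bmatrix}>{\rm rank}(B)$, some row $\bm\alpha_i^T$ of $A$ lies outside the row space of $B$; say $i=1$. Set $\mathcal{A}'=\mathcal{A}-\{H_1\}$, $B'=\begin{bmatrix}B\\\bm\alpha_1^T\end{bmatrix}$, $\bm b'=\begin{bmatrix}\bm b\\a_1\end{bmatrix}$, so ${\rm rank}(B')=r+1$. Splitting \eqref{Beta} according to whether $1\in J$, the terms with $1\notin J$ recover the $j$-th coefficient $\beta'_j(q)$ of $\mathcal{A}'^{(B,\bm b)}$, and the terms with $1\in J$ — writing $J=\{1\}\sqcup K$ and using ${\rm rank}(B')=r+1$ to absorb the sign change — recover the $j$-th coefficient $\beta''_j(q)$ of $\mathcal{A}'^{(B',\bm b')}$; equivalently, comparing coefficients in \autoref{DRF} and noting that raising the base rank by one flips a sign yields
\[
\beta_j(q)=\beta'_j(q)+\beta''_j(q)\qquad(r\le j\le s).
\]
Both $\mathcal{A}'^{(B,\bm b)}$ and $\mathcal{A}'^{(B',\bm b')}$ have $m-1$ hyperplanes, their $q_0$-constants are at most $q_0$, and their combinatorial-equivalence conditions are sub-families of those for $\mathcal{A}^{(B,\bm b)}$ (for $\mathcal{A}'^{(B',\bm b')}$ after identifying a subset $K$ of the remaining indices with $K\sqcup\{1\}$); hence the reduced hypotheses $a\mid b$, $a,b>q_0$, $a\stackrel{\mathcal{A}^{(B,\bm b)}}\sim b$ persist for both. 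By the induction hypothesis $0\le\beta'_j(a)\le\beta'_j(b)$ and $0\le\beta''_j(a)\le\beta''_j(b)$, and adding these gives $0\le\beta_j(a)\le\beta_j(b)$.

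The crux is the sign bookkeeping in the inductive step: one must delete a hyperplane whose normal is independent of $B$, so that passing to the restriction $\mathcal{A}'^{(B',\bm b')}$ raises the ambient rank by exactly one, which is precisely what converts the alternating recurrence of \autoref{DRF} into an \emph{additive} one for the unsigned coefficients; without this, the difference of two monotone nonnegative quantities would not be controlled. A secondary point is checking that combinatorial equivalence descends to the two smaller arrangements and can be traded, in the base case, for the solvability condition $[\bm b]_b\in\Col_b(B)$ required by \autoref{Key-Lemma}; and the reduction forces one to pass to $\gcd(a,\rho_C)\,p$ rather than $\gcd(a,\rho_C)$, since the latter may not exceed $q_0$.
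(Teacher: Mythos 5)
Your proposal is correct and follows essentially the same route as the paper's proof: reduce to the case $a\mid b$ by replacing $a,b$ with representatives exceeding $q_0$ that preserve $\gcd(\cdot,\rho_C)$ (the paper uses $a'=\gcd(a,\rho_C)+q_0\rho_C$, $b'=ca'$ rather than multiplying by a large prime, but this is immaterial), handle $r=s$ via \autoref{Key-Lemma} together with the $J=\emptyset$ instance of combinatorial equivalence when $B\bm x=\bm b$ is unsolvable, and for $r<s$ delete a hyperplane whose normal is independent of the rows of $B$ so that \autoref{DRF} turns into the additive recurrence $\beta_j=\beta_j'+\beta_j''$ for the induction on $m$. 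Your explicit checks (that the $q_0$-constants and combinatorial-equivalence conditions descend to the two smaller arrangements, and the separate treatment of $\rank[B,\bm b]>r$) are points the paper passes over more quickly, but the argument is the same.
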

\begin{proof}
Let $c={\rm gcd}(b,\rho_C)/{\rm gcd}(a,\rho_C)\in\mathbb{Z}_{>0}$, $a'={\rm gcd}(a,\rho_C)+q_0\rho_C$ and $b'=ca'$. Then
\[
{\rm gcd}(a',\rho_C)={\rm gcd}(a,\rho_C)\quad\And\quad {\rm gcd}(b',\rho_C)={\rm gcd}(b,\rho_C).
\]
Applying \autoref{GCD2} to $a\stackrel{\mathcal{A}^{(B,\bm b)}}\sim b$, we obtain $a'\stackrel{\mathcal{A}^{(B,\bm b)}}\sim b'$. From \autoref{Main2}, for all $J\subseteq[m]$ with $\bar{r}(J)=r(J)$, we have
\[
\tilde{d}_J(a')=\tilde{d}_J(a)\quad\And\quad \tilde{d}_J(b')=\tilde{d}_J(b).
\]
By the definition of $\beta_j(q)$ in \eqref{Beta}, we deduce that 
\[
\beta_j(a')=\beta_j(a)\quad\And\quad\beta_j(b')=\beta_j(b)\,\quad\forall\,\,r\le j\le s.
\]
Therefore, the proof boils down to showing that $0\le \beta_j(a')\le\beta_j(b')$ for all $r\le j\le s$ when $b'=ca'$ and $a'\stackrel{\mathcal{A}^{(B,\bm b)}}\sim b'$.

We now proceed by induction on $m$. If $m=0$, then $s=r$. Note $b'=ca'$. When $[\bm b]_{b'}\in\Col_{b'}(B)$, \autoref{Key-Lemma} directly implies $0\le\beta_s(a')\le\beta_s(b')$. If $[\bm b]_{b'}\notin\Col_{b'}(B)$, then $V_{b'}(B,\bm b)=\emptyset$ via \autoref{Solvable}. Thus $V_{a'}(B,\bm b)=\emptyset$ from $a'\stackrel{\mathcal{A}^{(B,\bm b)}}\sim b'$. Consequently, $\beta_s(a')=\beta_s(b')=0$ in this case. 

When $m\ge 1$, it suffices to consider two cases: $r=s$ and $r<s$. For the former case, the result follows directly from \autoref{Key-Lemma}. For the latter case, there exists a row $\bm\alpha^T_i$ of $A$ that is linearly independent of all rows of $B$. Let $(\bm\alpha^T_i,a_i)$ be its counterpart in $[A,\bm a]$. Let $[A',\bm a']$ denote the matrix obtained from $[A,\bm a]$ by deleting the row $(\bm\alpha^T_i,a_i)$, and $[B',\bm b']$ denote the matrix obtained by adding the row $[\bm\alpha^T_i,a_i]$ to the bottom of $[B,\bm b]$. Then
\[
\rank(B')=r+1,\quad\rank\begin{bmatrix}
B\\A'
\end{bmatrix}\le s\quad\And\quad 
\rank\begin{bmatrix}
B'\\A'
\end{bmatrix}=s.
\]
Let $H$ be a hyperplane of $\mathcal{A}$ with the defining equation $\bm\alpha^T_i\bm x=a_i$. Then the integral arrangement $\mathcal{A}'=\mathcal{A}-\{H\}$ has the defining matrix $[A',\bm a']$. The deletion-restriction formula in \autoref{DRF} means that
\[
\chi^{\text{quasi}}\big(\mathcal{A}^{(B,\bm b)},q\big)=\chi^{\text{quasi}}\big(\mathcal{A}'^{(B,\bm b)},q\big)-\chi^{\text{quasi}}\big(\mathcal{A}'^{(B',\bm b')},q\big).
\]
Clearly, $a'\stackrel{\mathcal{A}'^{(B,\bm b)}}\sim b'$ and $a'\stackrel{\mathcal{A}'^{(B',\bm b')}}\sim b'$. As $\mathcal{A}'$ has fewer hyperplanes than $\mathcal{A}$, the induction hypothesis implies that
\[
\chi^{\text{quasi}}\big(\mathcal{A}'^{(B,\bm b)},q\big)=\sum_{j=r}^{s}(-1)^{j-r}\beta'_j(q)q^{n-j}\quad\text{and}\quad \chi^{\text{quasi}}\big(\mathcal{A}'^{(B',\bm b')},q\big)=\sum_{j=r+1}^{s}(-1)^{j-r-1}\beta^{''}_j(q)q^{n-j}.
\]
satisfy
\[
0\le\beta'_j(a')\le\beta'_j(b')\quad\text{ and }\quad 0\le\beta^{''}_j(a')\le\beta^{''}_j(b').
\]
It follows that 
\[
0\le\beta_j(a')=\beta'_j(a')+\beta^{''}_j(a')\le\beta'_j(b')+\beta^{''}_j(b')=\beta_j(b')
\]
holds for all $r\le j\le s$ in this case. 

When $\bm a=\bm 0$ and $\bm b=\bm 0$, it is clear that $q_0=0$ and $a$ is combinatorially equivalent to $b$ with respect to $\mathcal{A}^{(B,\bm 0)}$ for any $a,b\in\mathbb{Z}_{>0}$. Then the result follows directly from \eqref{eq:compare beta}. 
\end{proof}

In fact, the first paragraph in the proof of \autoref{Main3} implicitly implies the following property: for any integers $a,b>q_0$, if ${\rm gcd}(a,\rho_C)\mid{\rm gcd}(b,\rho_C)$, then 
\begin{equation}\label{Equi}
a\stackrel{\mathcal{A}^{(B,\bm b)}}\sim b\iff [\bm c_J]_b\in\Col_b(C_J) \text{ whenever } [\bm c_J]_a\in\Col_a(C_J) \text{ for } J\subseteq[m].
\end{equation}
Specifically, let $a'$ and $b'$ be as defined in the proof of \autoref{Main3}. Since $a' \mid b'$, it follows from \autoref{Solvable} that $\bigcap_{j\in J}H_{j,a'}^{(B,\bm b)}\ne\emptyset$ whenever $\bigcap_{j\in J}H_{j,b'}^{(B,\bm b)}\ne\emptyset$ is equivalent to $[\bm c_J]_{a'}\in\Col_{a'}(C_J)$ when $[\bm c_J]_{b'}\in\Col_{b'}(C_J)$. Additionally, it is straightforward to check that for any $J\subseteq [m]$ with $\bar{r}(J)=r(J)$,  
\[
[\bm c_J]_a\in\Col_a(C_J)\iff [\bm c_J]_{a'}\in\Col_{a'}(C_J)\quad \text{and} \quad [\bm c_J]_b\in\Col_b(C_J)\iff [\bm c_J]_{b'}\in\Col_{b'}(C_J).
\]
Consequently, ${\rm gcd}(a,\rho_C)\mid{\rm gcd}(b,\rho_C)$ deduces that $\bigcap_{j\in J}H_{j,a}^{(B,\bm b)}\ne\emptyset$ whenever $\bigcap_{j\in J}H_{j,b}^{(B,\bm b)}\ne\emptyset$ for any $J\subseteq [m]$ with $\bar{r}(J)=r(J)$. Applying \autoref{Solvable} again, we conclude that the equivalence statement in \eqref{Equi} holds.

Furthermore, the condition that $[\bm c_J]_b\in\Col_b(C_J)$ whenever $[\bm c_J]_a\in\Col_a(C_J)$ for any $J\subseteq[m]$ with $\bar{r}(J)=r(J)$ in \autoref{Main3} is necessary. For example, let
\[
B=\begin{bmatrix}
2 & 0
\end{bmatrix}
,\quad \bm b=\begin{bmatrix}
0
\end{bmatrix},\quad A=\begin{bmatrix}
2 & 2
\end{bmatrix},\quad \bm a=\begin{bmatrix}
3
\end{bmatrix}.
\]
Then
\[
V_q(B,\bm b)=\big\{(x_1,x_2)^T\in\mathbb{Z}_q^2:[2]_qx_1=[0]_q\big\}\quad\And\quad\mathcal{A}=\mathcal{A}(A,\bm a)=\{H:2x_1+2x_2=3\}.
\]
Thus $\mathcal{A}_q^{(B,\bm b)}=\big\{H_q^{(B,\bm b)}=H_q\cap V_q(B,\bm b)\big\}$. It is obvious that $\rho_C=2$ and $q_0=2$. Let integers $q>q_0=2$. Then the characteristic quasi-polynomial $\chi^{\text{quasi}}\big(\mathcal{A}^{(B,\bm b)},q\big)$ is
\[
\chi^{\text{quasi}}\big(\mathcal{A}^{(B,\bm b)},q\big)=\begin{cases}2q,&\text{ if }q\text{ is even};\\
q-1,&\text{ if }q\text{ is odd}.\end{cases}
\]
We now consider $a=3$ and $b=6$. Clearly, ${\rm gcd}(a,\rho_C)\mid{\rm gcd}(b,\rho_C)$, and $H_3\cap V_3(B,\bm b)\ne\emptyset$, but $H_6\cap V_6(B,\bm b)=\emptyset$. By outline analysis, we have
\[
\beta_1(3)=1<2=\beta_1(6) \quad\And\quad \beta_2(3)=1>0= \beta_2(6)  .
\]
This means that \eqref{eq:compare beta} in \autoref{Main3} may fail to hold when the condition that $[\bm c_J]_b\in\Col_b(C_J)$ whenever $[\bm c_J]_a\in\Col_a(C_J)$ for $J\subseteq[m]$ is moved.

Recall from part (1) in \autoref{Main2} that each constituent $f_a\big(\mathcal{A}^{(B,\bm b)},t\big)$ ($a=1,\ldots,\rho_C$) of the characteristic quasi-polynomial $\chi^{\text{quasi}}\big(\mathcal{A}^{(B,\bm b)},q\big)$ has the following form
\[
f_a\big(\mathcal{A}^{(B,\bm b)},t\big)=\sum_{J:\, J\subseteq[m],\,\bar{r}(J)=r(J)}(-1)^{\#J}\tilde{d}_J(a)t^{n-r(J)}.
\]
Let
\begin{equation}\label{gamma}
\gamma_j(a):=(-1)^{j-r}\sum_{J:\, J\subseteq[m],\,\bar{r}(J)=r(J)=j}(-1)^{\#J}\tilde{d}_J(a).
\end{equation}
Then the polynomial $f_a\big(\mathcal{A}^{(B,\bm b)},t\big)$ can further be written the form
\[
f_a\big(\mathcal{A}^{(B,\bm b)},t\big)=\sum_{j=r}^s(-1)^{j-r}\gamma_j(a)t^{n-j}.
\]
Via part (3) of \autoref{Main2}, for any integers $q>q_0$ with ${\rm gcd}(q,\rho_C)={\rm gcd}(a,\rho_C)$, we have
\[
\chi^{\text{quasi}}\big(\mathcal{A}^{(B,\bm b)},q\big)=f_a\big(\mathcal{A}^{(B,\bm b)},q\big).
\]
By further comparing \eqref{Beta} and \eqref{gamma}, we readily observe that
\begin{equation}\label{Beta-gamma}
\beta_j(q)=\gamma_j(a),\quad\forall\, r\le j\le s.
\end{equation}
Therefore, building on \autoref{Main3}, we derive a unified comparison relation between the unsigned coefficients $\gamma_j(a)$ of  distinct constituents $f_a\big(\mathcal{A}^{(B,\bm b)},t\big)$.
\begin{theorem}\label{Main4}
For any integers $1\le a,b\le\rho_C$, if ${\rm gcd}(a,\rho_C)\mid{\rm gcd}(b,\rho_C)$ and  ${\rm gcd}(a,\rho_C)\stackrel{\mathcal{A}^{(B,\bm b)}}\sim {\rm gcd}(b,\rho_C)$, then 
\[
0\le \gamma_j(a)\le\gamma_j(b),\quad\forall\, r\le j\le s.
\]
Moreover, when $\bm a=\bm 0, \bm b=\bm 0$, for any integers $1\le a,b\le\rho_C$,  if ${\rm gcd}(a,\rho_C)\mid{\rm gcd}(b,\rho_C)$, then 
\[
0\le \gamma_j(a)\le\gamma_j(b),\quad\forall\, r\le j\le s.
\]
\end{theorem}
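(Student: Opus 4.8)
The plan is to obtain \autoref{Main4} as an essentially immediate consequence of \autoref{Main3} by lifting the residues $a,b\in\{1,\dots,\rho_C\}$ to integers larger than $q_0$ that have the same greatest common divisor with $\rho_C$. First I would set $a':={\rm gcd}(a,\rho_C)+q_0\rho_C$ and $b':={\rm gcd}(b,\rho_C)+q_0\rho_C$. Since $\rho_C\ge 1$ we have $a',b'>q_0$, and because ${\rm gcd}(x+k\rho_C,\rho_C)={\rm gcd}(x,\rho_C)$ together with ${\rm gcd}(a,\rho_C)\mid\rho_C$ (and likewise for $b$), we get ${\rm gcd}(a',\rho_C)={\rm gcd}(a,\rho_C)$ and ${\rm gcd}(b',\rho_C)={\rm gcd}(b,\rho_C)$. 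This is exactly the same normalization used in the proof of \autoref{Main3}.

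Next I would verify that the pair $a',b'$ satisfies both hypotheses of \autoref{Main3}. Divisibility is immediate: ${\rm gcd}(a',\rho_C)={\rm gcd}(a,\rho_C)\mid{\rm gcd}(b,\rho_C)={\rm gcd}(b',\rho_C)$. For combinatorial equivalence, the assumption ${\rm gcd}(a,\rho_C)\stackrel{\mathcal{A}^{(B,\bm b)}}\sim{\rm gcd}(b,\rho_C)$, rewritten via ${\rm gcd}(a',\rho_C)={\rm gcd}(a,\rho_C)$ and ${\rm gcd}(b',\rho_C)={\rm gcd}(b,\rho_C)$, becomes ${\rm gcd}(a',\rho_C)\stackrel{\mathcal{A}^{(B,\bm b)}}\sim{\rm gcd}(b',\rho_C)$, and \autoref{GCD2} then yields $a'\stackrel{\mathcal{A}^{(B,\bm b)}}\sim b'$. (Alternatively one can use \autoref{GCD1} twice together with transitivity of $\stackrel{\mathcal{A}^{(B,\bm b)}}\sim$.) Applying \autoref{Main3} to $a',b'>q_0$ gives $0\le\beta_j(a')\le\beta_j(b')$ for all $r\le j\le s$. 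Finally I would transport this back to the constituents through \eqref{Beta-gamma}: since $a'>q_0$ and ${\rm gcd}(a',\rho_C)={\rm gcd}(a,\rho_C)$ we have $\beta_j(a')=\gamma_j(a)$, and similarly $\beta_j(b')=\gamma_j(b)$, so $0\le\gamma_j(a)\le\gamma_j(b)$ for all $r\le j\le s$. For the ``moreover'' part, in the central case $\bm a=\bm 0$, $\bm b=\bm 0$ one has $q_0=0$ and any two positive integers are combinatorially equivalent with respect to $\mathcal{A}^{(B,\bm 0)}$, so in particular ${\rm gcd}(a,\rho_C)\stackrel{\mathcal{A}^{(B,\bm 0)}}\sim{\rm gcd}(b,\rho_C)$ holds automatically and the conclusion follows from the first assertion.

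I do not expect a genuine obstacle here: the statement is a corollary of \autoref{Main3}, and the only points requiring care are the choice of the lifts $a',b'$ preserving the gcd with $\rho_C$, and the correct invocation of \autoref{GCD2} to propagate combinatorial equivalence from the pair $\big({\rm gcd}(a,\rho_C),{\rm gcd}(b,\rho_C)\big)$ to the pair $(a',b')$ before calling \autoref{Main3}. The argument is thus bookkeeping built on \eqref{Beta-gamma} and the already-established results, rather than a new idea.
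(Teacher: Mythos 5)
Your proposal is correct and follows essentially the same route as the paper: lift $a,b$ to $a',b'>q_0$ with the same gcd's with $\rho_C$, propagate combinatorial equivalence via \autoref{GCD2}, apply \autoref{Main3}, and translate back through \eqref{Beta-gamma}. The only (immaterial) difference is that the paper chooses $b'=ca'$ with $c={\rm gcd}(b,\rho_C)/{\rm gcd}(a,\rho_C)$ rather than $b'={\rm gcd}(b,\rho_C)+q_0\rho_C$; both choices satisfy the hypotheses of \autoref{Main3}.
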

\begin{proof}
Let $c={\rm gcd}(b,\rho_C)/{\rm gcd}(a,\rho_C)\in\mathbb{Z}_{>0}$, $a'={\rm gcd}(a,\rho_C)+q_0\rho_C>q_0$ and $b'=ca'>q_0$. As proved in the first paragraph of the proof of \autoref{Main3}, we have
\[
{\rm gcd}(a',\rho_C)={\rm gcd}(a,\rho_C)\mid {\rm gcd}(b,\rho_C)={\rm gcd}(b',\rho_C)\quad\And\quad a'\stackrel{\mathcal{A}^{(B,\bm b)}}\sim b'.
\]
Immediately, from \eqref{Beta-gamma}, we have
\[
\beta_j(a')=\gamma_j(a)\quad\And\quad \beta_j(b')=\gamma_j(b).
\]
Applying \autoref{Main3}, we conclude that $0\le\gamma_j(a)\le\gamma_j(b)$ for all $r\le j\le s$. By the same reasoning, the result holds for the central case.
\end{proof}

Furthermore, if $H_1^{(B,\bm b)},\ldots,H_m^{(B,\bm b)}$ are codimension-one subspaces of $V(B,\bm b)$, the celebrated broken circuit theorem (as stated in  \cite[p.435]{Stanley2007}) asserts that the characteristic polynomial $\chi\big(\mathcal{A}^{(B,\bm b)},t\big)$ can be expressed as
\[
\chi\big(\mathcal{A}^{(B,\bm b)},t\big)=\sum_{j=r}^s(-1)^{j-r}w_j\big(\mathcal{A}^{(B,\bm b)}\big)t^{n-j}
\]
where $w_j\big(\mathcal{A}^{(B,\bm b)}\big)>0$ denotes the number of $j$-subsets of $\mathcal{A}^{(B,\bm b)}$ containing no broken circuits. 

The following corollary first gives a uniform comparison between the coefficients of the characteristic polynomial $\chi\big(\mathcal{A}^{(B,\bm b)},t\big)$ and those of the polynomial  $f_a\big(\mathcal{A}^{(B,\bm b)},t\big)$. This further implies that, under suitable conditions, all coefficients of $f_a\big(\mathcal{A}^{(B,\bm b)},t\big)$ are nonzero and alternate in sign.
\begin{corollary}\label{Nonzero-Alternate}
Suppose $V(B,\bm b)\ne\emptyset$ and $H_1^{(B,\bm b)},\ldots,H_m^{(B,\bm b)}$ are codimension-one subspaces of $V(B,\bm b)$. For any integers $q>q_0$ and $1\le a\le\rho_C$, if ${\rm gcd}(q,\rho_C)={\rm gcd}(a,\rho_C)$ and $[\bm c_J]_q\in\Col_q(C_J)$ for all $J\subseteq[m]$ with $\bar{r}(J)=r(J)$, then 
\[
0<w_j\big(\mathcal{A}^{(B,\bm b)}\big)=\gamma_j(1)\le\gamma_j(a)=\beta_j(q),\quad\forall\, r\le j\le s.
\]
Moreover, when $\bm a=\bm 0$ and $\bm b=\bm 0$, for any integers $q\in\mathbb{Z}_{>0}$ and $1\le a\le\rho_C$, if ${\rm gcd}(q,\rho_C)={\rm gcd}(a,\rho_C)$, then
\[
0<w_j\big(\mathcal{A}^{(B,\bm 0)}\big)=\gamma_j(1)\le\gamma_j(a)=\beta_j(q),\quad\forall\, r\le j\le s.
\]
\end{corollary}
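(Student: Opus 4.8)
The plan is to read the corollary off from three links, each of which is already at hand: the identification $\gamma_j(1)=w_j\big(\mathcal{A}^{(B,\bm b)}\big)$, the transfer $\gamma_j(a)=\beta_j(q)$, and the monotonicity $\gamma_j(1)\le\gamma_j(a)$.

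First I would settle the two equalities. Since $V(B,\bm b)\ne\emptyset$, \autoref{Char-Quasi} gives $\chi\big(\mathcal{A}^{(B,\bm b)},t\big)=f_1\big(\mathcal{A}^{(B,\bm b)},t\big)$; by part~(1) of \autoref{Main2}, \eqref{gamma}, and the fact that $\tilde d_J(1)=1$ for every $J$ with $\bar r(J)=r(J)$, this polynomial equals $\sum_{j=r}^{s}(-1)^{j-r}\gamma_j(1)t^{n-j}$. On the other hand, under the codimension-one hypothesis the broken circuit theorem (recalled just above, cf.\ \cite[p.435]{Stanley2007}) writes $\chi\big(\mathcal{A}^{(B,\bm b)},t\big)=\sum_{j=r}^{s}(-1)^{j-r}w_j\big(\mathcal{A}^{(B,\bm b)}\big)t^{n-j}$ with every $w_j\big(\mathcal{A}^{(B,\bm b)}\big)>0$; comparing coefficients yields $\gamma_j(1)=w_j\big(\mathcal{A}^{(B,\bm b)}\big)>0$ for $r\le j\le s$. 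The second equality is immediate: the assumptions $q>q_0$ and $\gcd(q,\rho_C)=\gcd(a,\rho_C)$ are precisely those behind \eqref{Beta-gamma}, so $\beta_j(q)=\gamma_j(a)$ for $r\le j\le s$.

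The substantive step is $\gamma_j(1)\le\gamma_j(a)$, which I would get from \autoref{Main4} applied to the pair $(1,a)$. Its first hypothesis $\gcd(1,\rho_C)\mid\gcd(a,\rho_C)$ is trivial; the second, $\gcd(1,\rho_C)\stackrel{\mathcal{A}^{(B,\bm b)}}{\sim}\gcd(a,\rho_C)$ (i.e.\ $1\stackrel{\mathcal{A}^{(B,\bm b)}}{\sim}\gcd(a,\rho_C)$), is where the solvability hypothesis enters. Over $\mathbb{Z}_1$ every linear system is solvable, so $\bigcap_{j\in J}H_{j,1}^{(B,\bm b)}\ne\emptyset$ for all $J$; hence $1\stackrel{\mathcal{A}^{(B,\bm b)}}{\sim}\gcd(a,\rho_C)$ amounts to requiring $\bigcap_{j\in J}H_{j,\gcd(a,\rho_C)}^{(B,\bm b)}\ne\emptyset$ for every $J\subseteq[m]$ with $\bar r(J)=r(J)$. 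Now $\gcd(q,\rho_C)=\gcd(a,\rho_C)=\gcd\big(\gcd(a,\rho_C),\rho_C\big)$, so \autoref{GCD1} gives $q\stackrel{\mathcal{A}^{(B,\bm b)}}{\sim}\gcd(a,\rho_C)$, while the hypothesis $[\bm c_J]_q\in\Col_q(C_J)$ for all such $J$ translates, via \autoref{Solvable}, into $\bigcap_{j\in J}H_{j,q}^{(B,\bm b)}\ne\emptyset$ for all such $J$. Transporting the latter along $q\stackrel{\mathcal{A}^{(B,\bm b)}}{\sim}\gcd(a,\rho_C)$ gives the required nonemptiness at $\gcd(a,\rho_C)$; thus \autoref{Main4} applies and $0\le\gamma_j(1)\le\gamma_j(a)$. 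Chaining with the previous paragraph yields $0<w_j\big(\mathcal{A}^{(B,\bm b)}\big)=\gamma_j(1)\le\gamma_j(a)=\beta_j(q)$ for all $r\le j\le s$.

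Finally, in the central case $\bm a=\bm 0$, $\bm b=\bm 0$ one has $q_0=0$, $V(B,\bm 0)\ni\bm 0\ne\emptyset$, and $[\bm c_J]_q=[\bm 0]_q\in\Col_q(C_J)$ for every $J$, so the hypotheses of the first part are automatically met for all $q\in\mathbb{Z}_{>0}$ and the conclusion follows verbatim. I do not expect a genuine obstacle here: the argument is an assembly of \autoref{Char-Quasi}, \autoref{Main2}, \autoref{Main4}, \autoref{GCD1}, and \autoref{Solvable}. The one point that needs care is noticing that the solvability hypothesis on the $[\bm c_J]_q$ is exactly what upgrades ``combinatorially equivalent to $q$'' to ``combinatorially equivalent to $1$''; the example preceding the corollary shows that without it the comparison with the characteristic polynomial can indeed break down.
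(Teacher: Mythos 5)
Your proof is correct and follows essentially the same route as the paper's: both establish $\gamma_j(1)=w_j$ via \autoref{Char-Quasi} and $\gamma_j(a)=\beta_j(q)$ via \eqref{Beta-gamma}, then reduce to verifying the combinatorial equivalence $1\stackrel{\mathcal{A}^{(B,\bm b)}}{\sim}\gcd(a,\rho_C)$ so that \autoref{Main4} applies. The only cosmetic difference is that the paper passes through the auxiliary integer $b=1+q_0\rho_C$ and invokes \eqref{Equi} to build the chain $1\sim b\sim q\sim a$, whereas you shortcut this by observing that every system is trivially solvable modulo $1$ and transporting nonemptiness from $q$ to $\gcd(a,\rho_C)$ directly via \autoref{GCD1}; this is a tidy simplification but not a genuinely distinct argument.
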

\begin{proof}
Notice that $w_j\big(\mathcal{A}^{(B,\bm b)}\big)=\gamma_j(1)$ via \autoref{Char-Quasi} and $\gamma_j(a)=\beta_j(q)$ by \eqref{Beta-gamma}. The proof reduces to proving $\gamma_j(1)\le\gamma_j(a)$. Consider $b=1+q_0\rho_C>q_0$. Then we have
\[
{\rm gcd}(b,\rho_C)={\rm gcd}(1,\rho_C)=1\mid {\rm gcd}(a,\rho_C)={\rm gcd}(q,\rho_C).
\]
Additionally, the preceding discussion of \autoref{Char-Quasi} shows that $[\bm c_J]_b\in\Col_b(C_J)$ for all $J\subseteq[m]$ with $\bar{r}(J)=r(J)$ when ${\rm gcd}(b,\rho_C)=1$. Together with $[\bm c_J]_q\in\Col_q(C_J)$ for every $J\subseteq[m]$ satisfying $\bar{r}(J)=r(J)$, we deduce from \autoref{GCD1} and \eqref{Equi} that
\[
1\stackrel{\mathcal{A}^{(B,\bm b)}}\sim b\stackrel{\mathcal{A}^{(B,\bm b)}}\sim q\stackrel{\mathcal{A}^{(B,\bm b)}}\sim a.
\]
Applying \autoref{Main4}, we obtain $\gamma_j(1)\le\gamma_j(a)$. The same reasoning yields the conclusion for the central case. We finish the proof.
\end{proof}

When $l=0$, we have $\mathcal{A}=\mathcal{A}^{(B,\bm b)}$. In this context, \eqref{CQP1} gives the expression for the characteristic quasi-polynomial $\chi^{\text{quasi}}(\mathcal{A},q)$ as
\[
\chi^{\text{quasi}}\big(\mathcal{A},q\big)=\sum_{j=0}^s(-1)^j\beta_j(q)q^{n-j},
\]
where $s$ is the rank of $[A,\bm a]$. As an immediate consequence of \eqref{Beta-gamma}, \autoref{Main4} and \autoref{Nonzero-Alternate}, the coefficients of the polynomial $f_a(\mathcal{A},t)$ are nonzero and alternate in sign whenever $a$ is combinatorially equivalent $1$ with respect to $\mathcal{A}$ and its unsigned coefficients are no smaller than those of the characteristic polynomial $\chi(\mathcal{A},t)$.
\begin{corollary}\label{Nonzero-Alternate1}
For any integers $q>q_0$ and $1\le a,b\le\rho_C$, if ${\rm gcd}(a,\rho_C)\mid {\rm gcd}(b,\rho_C)={\rm gcd}(q,\rho_C)$, ${\rm gcd}(a,\rho_C)\stackrel{\mathcal{A}}\sim {\rm gcd}(b,\rho_C)$ and $[\bm a_J]_q\in\Col_q(A_J)$ for all $J\subseteq[m]$ with $\bar{r}(J)=r(J)$, then 
\[
0<w_j(\mathcal{A})=\gamma_j(1)\le\gamma_j(a)\le\gamma_j(b)=\beta_j(q),\quad\forall\, 0\le j\le s.
\]
Moreover, when $\bm a=\bm 0$, for any integers $q\in\mathbb{Z}_{>0}$ and $1\le a,b\le\rho_C$, if ${\rm gcd}(a,\rho_C)\mid {\rm gcd}(b,\rho_C)={\rm gcd}(q,\rho_C)$, then
\[
0<w_j(\mathcal{A})=\gamma_j(1)\le\gamma_j(a)\le\gamma_j(b)=\beta_j(q),\quad\forall\, 0\le j\le s.
\]
\end{corollary}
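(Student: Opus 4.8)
The plan is to read the statement off from the machinery already established, using only that $l=0$ forces $\mathcal{A}=\mathcal{A}^{(B,\bm b)}$, that then $r=r(\emptyset)=0$, and that each $H_i=H_i^{(B,\bm b)}$ is a genuine hyperplane of $V(B,\bm b)=\mathbb{R}^n$. First I would settle the two outer links of the chain. Since the $H_i$ are codimension-one, the broken circuit theorem recalled above gives $\chi(\mathcal{A},t)=\sum_{j=0}^{s}(-1)^{j}w_j(\mathcal{A})t^{n-j}$ with $w_j(\mathcal{A})>0$ for all $0\le j\le s$; combining this with \autoref{Char-Quasi}, which identifies $\chi(\mathcal{A},t)$ with $f_1(\mathcal{A},t)=\sum_{j=0}^{s}(-1)^{j}\gamma_j(1)t^{n-j}$, and comparing coefficients yields $w_j(\mathcal{A})=\gamma_j(1)>0$ for those $j$. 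The identity $\gamma_j(b)=\beta_j(q)$ is exactly \eqref{Beta-gamma}, valid since $q>q_0$ and $\gcd(q,\rho_C)=\gcd(b,\rho_C)$. So the corollary reduces to proving $\gamma_j(1)\le\gamma_j(a)\le\gamma_j(b)$ for $0\le j\le s$.

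The right-hand inequality $\gamma_j(a)\le\gamma_j(b)$ is precisely \autoref{Main4} applied to the pair $(a,b)$, whose hypotheses $\gcd(a,\rho_C)\mid\gcd(b,\rho_C)$ and $\gcd(a,\rho_C)\stackrel{\mathcal{A}}\sim\gcd(b,\rho_C)$ are assumed. For the left-hand inequality $\gamma_j(1)\le\gamma_j(a)$ I would apply \autoref{Main4} to the pair $(1,a)$: the divisibility $\gcd(1,\rho_C)=1\mid\gcd(a,\rho_C)$ is automatic, so the only point left is $\gcd(1,\rho_C)\stackrel{\mathcal{A}}\sim\gcd(a,\rho_C)$, equivalently $1\stackrel{\mathcal{A}}\sim a$ by \autoref{GCD2}. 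I would obtain this exactly as in the proof of \autoref{Nonzero-Alternate}: put $b^{\ast}:=1+q_0\rho_C>q_0$, so that $\gcd(b^{\ast},\rho_C)=1$; then the discussion preceding \autoref{Char-Quasi} gives $[\bm a_J]_{b^{\ast}}\in\Col_{b^{\ast}}(A_J)$ for every $J$ with $\bar r(J)=r(J)$, while the solvability hypothesis of the corollary gives $[\bm a_J]_{q}\in\Col_{q}(A_J)$ for these same $J$, so the equivalence in \eqref{Equi} (legitimate because $\gcd(b^{\ast},\rho_C)=1\mid\gcd(q,\rho_C)$ and $b^{\ast},q>q_0$) yields $b^{\ast}\stackrel{\mathcal{A}}\sim q$, while \autoref{GCD1} yields $1\stackrel{\mathcal{A}}\sim b^{\ast}$. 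Finally, the hypotheses $\gcd(a,\rho_C)\stackrel{\mathcal{A}}\sim\gcd(b,\rho_C)$ and $\gcd(b,\rho_C)=\gcd(q,\rho_C)$, together with \autoref{GCD1} (which gives $a\stackrel{\mathcal{A}}\sim\gcd(a,\rho_C)$ and $q\stackrel{\mathcal{A}}\sim\gcd(q,\rho_C)$), give $a\stackrel{\mathcal{A}}\sim q$ by transitivity; chaining, $1\stackrel{\mathcal{A}}\sim b^{\ast}\stackrel{\mathcal{A}}\sim q\stackrel{\mathcal{A}}\sim a$, as required, and \autoref{Main4} delivers $0\le\gamma_j(1)\le\gamma_j(a)$. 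Combining the two inequalities with $\gamma_j(1)>0$ gives the displayed estimate; since then $\gamma_j(a)\ge\gamma_j(1)>0$, the coefficients of $f_a(\mathcal{A},t)$ are all nonzero and alternate in sign.

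For the central case $\bm a=\bm 0$ one has $q_0=0$ and every intersection $\bigcap_{j\in J}H_{j,q}^{(B,\bm b)}$ contains $\bm 0$, so the solvability hypothesis and all combinatorial equivalences used above hold automatically, and the identical argument yields the stated inequalities for every $q\in\mathbb{Z}_{>0}$ and $1\le a,b\le\rho_C$ with $\gcd(a,\rho_C)\mid\gcd(b,\rho_C)=\gcd(q,\rho_C)$. The one step requiring genuine care is the middle manipulation establishing $1\stackrel{\mathcal{A}}\sim a$ — that is, transporting the solvability information from the single modulus $q$ to the modulus $a$ via \autoref{GCD1}, \autoref{GCD2} and \eqref{Equi}; once that is in place, the corollary is a formal assembly of \autoref{Main4}, \autoref{Char-Quasi}, and the broken circuit theorem.
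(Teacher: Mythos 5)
Your proposal is correct and follows exactly the route the paper indicates (it offers no explicit proof, only that the corollary is ``an immediate consequence of \eqref{Beta-gamma}, \autoref{Main4} and \autoref{Nonzero-Alternate}''). The only nontrivial step you had to supply is the chain $1\stackrel{\mathcal{A}}{\sim} b^{\ast}\stackrel{\mathcal{A}}{\sim} q\stackrel{\mathcal{A}}{\sim} a$ establishing $\gcd(1,\rho_C)\stackrel{\mathcal{A}}{\sim}\gcd(a,\rho_C)$ so that \autoref{Main4} applies to the pair $(1,a)$; this is precisely the device used in the proof of \autoref{Nonzero-Alternate}, transported correctly using \autoref{GCD1}, \autoref{GCD2}, \eqref{Equi} and the hypothesis $[\bm a_J]_q\in\Col_q(A_J)$.
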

As explained earlier, the motivation traces back to Chen and Wang's work \cite{CW2012}. At the end of this section, we clarify how our results are connected to theirs. It is worth remarking that whenever positive integers $a$ divides $b$, the divisibility ${\rm gcd}(a,\rho_C)\mid {\rm gcd}(b,\rho_C)$ is automatically satisfied. In the central case: $\bm a=\bm 0$ and $\bm b=\bm 0$, it is clear that $q_0=0$ and positive integers $a$ and $b$ are combinatorially equivalent with respect to $\mathcal{A}^{(B,\bm 0)}$. Therefore, as a special case, we can recover Chen and Wang's main results for the central case by \autoref{Main3}, \autoref{Nonzero-Alternate} and \autoref{Nonzero-Alternate1}. However, ${\rm gcd}(a,\rho_C)\mid {\rm gcd}(b,\rho_C)$ may still hold even when $a\nmid b$. Thus, our work not only extends Chen and Wang's results to the non-central case, but also weakens their original hypothesis to a strictly more general one. 
\begin{theorem}[\cite{CW2012}, Theorem 1.2 and Corollary 1.3]
When $\bm a=\bm 0,\bm b=\bm 0$, the following results hold:
\begin{itemize}
\item[{\rm (1)}] For all $a,b\in\mathbb{Z}_{>0}$ with $a\mid b$, $0\le\beta_j(a)\le\beta_j(b)$ for every $r\le j\le s$.
\item[{\rm (2)}] If $H_1^{(B,\bm 0)},\ldots,H_m^{(B,\bm 0)}$ are codimension-one subspaces of $V(B,\bm 0)$, then $0<w_j\big(\mathcal{A}^{(B,\bm 0)}\big)\le\beta_j(q)$ for all integers $q\in\mathbb{Z}_{>0}$ and $r\le j\le s$.
\item[{\rm (3)}] If $l=0$, then $0<w_j(\mathcal{A})\le\beta_j(q)$ for all integers $q\in\mathbb{Z}_{>0}$ and $0\le j\le s$.
\end{itemize}
\end{theorem}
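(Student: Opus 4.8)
The plan is to obtain all three items as direct specializations of the general results \autoref{Main3}, \autoref{Nonzero-Alternate} and \autoref{Nonzero-Alternate1} to the central case $\bm a=\bm 0$, $\bm b=\bm 0$. Two elementary observations make every auxiliary hypothesis appearing there vacuous. First, for each $J\subseteq[m]$ the vector $\bm c_J=\bm 0$ trivially lies in $\Col_q(C_J)$, so the system $[C_J]_q\bm x=[\bm c_J]_q$ is solvable for every $q$; hence $\bar{r}(J)=r(J)$ for all $J$, the index set $\big\{J:\bar{r}(J)=r(J)+1\big\}$ is empty, and consequently $q_0=q_0(\bar{C})=0$. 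Second, for the same reason $\bm 0\in\bigcap_{j\in J}H_{j,q}^{(B,\bm 0)}$, so $\bigcap_{j\in J}H_{j,q}^{(B,\bm 0)}\ne\emptyset$ for all $q$ and all $J$; therefore any two positive integers are combinatorially equivalent with respect to $\mathcal{A}^{(B,\bm 0)}$, and the column-space conditions $[\bm c_J]_q\in\Col_q(C_J)$ hold automatically. We also use the number-theoretic fact that $a\mid b$ implies $\gcd(a,\rho_C)\mid\gcd(b,\rho_C)$.

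Granting these, item (1) is immediate from the ``Moreover'' clause of \autoref{Main3}: from $a\mid b$ we get $\gcd(a,\rho_C)\mid\gcd(b,\rho_C)$, so $0\le\beta_j(a)\le\beta_j(b)$ for all $r\le j\le s$. For item (2), given $q\in\mathbb{Z}_{>0}$ I would choose the unique $a$ with $1\le a\le\rho_C$ and $\gcd(a,\rho_C)=\gcd(q,\rho_C)$; since $V(B,\bm 0)\ne\emptyset$ (it contains $\bm 0$) and the $H_i^{(B,\bm 0)}$ are codimension-one subspaces, the ``Moreover'' clause of \autoref{Nonzero-Alternate} gives $0<w_j\big(\mathcal{A}^{(B,\bm 0)}\big)=\gamma_j(1)\le\gamma_j(a)=\beta_j(q)$ for all $r\le j\le s$. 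For item (3), when $l=0$ we have $\mathcal{A}=\mathcal{A}^{(B,\bm b)}$ and $V(B,\bm 0)=\mathbb{R}^n$; picking $a$ with $1\le a\le\rho_C$ and $\gcd(a,\rho_C)=\gcd(q,\rho_C)$ and applying the ``Moreover'' clause of \autoref{Nonzero-Alternate1} (with $b=a$) yields $0<w_j(\mathcal{A})=\gamma_j(1)\le\gamma_j(a)=\beta_j(q)$ for all $0\le j\le s$.

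There is essentially no obstacle here; this statement is a corollary recording how the general machinery recovers Chen and Wang's original theorem. The only point requiring care is the verification that the combinatorial-equivalence hypotheses and the solvability conditions $[\bm c_J]_q\in\Col_q(C_J)$ degenerate to triviality once $\bm a$ and $\bm b$ vanish, which is exactly the content of the two observations in the first paragraph; everything else is a matter of quoting the appropriate ``Moreover'' clauses.
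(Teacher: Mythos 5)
Your proposal is correct and follows exactly the route the paper takes: the paper justifies this theorem in the remark immediately preceding it, by noting that in the central case $q_0=0$, any two positive integers are combinatorially equivalent, and $a\mid b$ forces $\gcd(a,\rho_C)\mid\gcd(b,\rho_C)$, so the three items are the ``Moreover'' clauses of \autoref{Main3}, \autoref{Nonzero-Alternate} and \autoref{Nonzero-Alternate1}. Your two opening observations (appending the zero column $\bm c_J=\bm 0$ leaves all ranks and solvability conditions trivial, and $\bm 0$ witnesses nonemptiness of every intersection) supply precisely the details the paper leaves implicit.
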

\subsection{Comparison of counting functions}\label{Sec3-2}
In this subsection, we restrict our attention to the interrelation among the cardinalities of $M\big(\mathcal{A}_q^{(B,\bm b)}\big)$ for $q\in\mathbb{Z}_{>0}$. To this end, for any $p,q\in\mathbb{Z}_{>0}$, we begin by considering a map defined by
\[
\pi_q:\mathbb{Z}^n_{pq}\to\mathbb{Z}^n_q,\quad[\bm x]_{pq}\mapsto\pi_q\big([\bm x]_{pq}\big)=[\bm x]_q\quad\text{where}\quad\bm x\in\mathbb{Z}^n.
\]
For any $\bm x_1, \bm x_2\in\mathbb{Z}^n$, if $\bm x_1\equiv\bm x_2\pmod{pq}$, then $\bm x_1\equiv\bm x_2\pmod q$ necessarily holds. Thus, $\pi_q$ is well-defined. Moreover, the following inclusion holds: for any $M\in\mathcal{M}_{m\times n}(\mathbb{Z})$ and $\bm c\in\mathbb{Z}^m$, the image of  $V_{pq}(M,\bm c)$ under the restricted map $\pi_q|_{V_{pq}(M,\bm c)}$ is contained in $V_q(M,\bm c)$, i.e.,
\[
\pi_q\big(V_{pq}(M, \bm c)\big)\subseteq V_q(M, \bm c).
\]
However, for general $p,q\in\mathbb{Z}_{>0}$, the restriction $\pi_q|_{V_{pq}(M, \bm c)}$ is not necessarily surjective. Subsequently, we give a necessary and sufficient condition for the restriction $\pi_q|_{V_{pq}(M, \bm c)}$ to be surjective.
\begin{lemma}\label{lem:surjective modular map}
Let $M\in\mathcal{M}_{m\times n}(\mathbb{Z})$, $\bm c\in\mathbb{Z}^m$, and $p,q\in\mathbb{Z}_{>0}$ such that  $[\bm c]_{pq}\in\Col_{pq}(M)$. The restriction of $\pi_q$ to $V_{pq}(M,\bm c)$ is a surjective map from $V_{pq}(M,\bm c)$ to $V_q(M,\bm c)$ if and only if ${\rm gcd}(p,d_i){\rm gcd}(q,d_i)\mid d_i$ for all $1\le i\le r$, where $d_1\mid\cdots\mid d_r$ denote the invariant factors of $M$.
\end{lemma}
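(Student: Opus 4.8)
The plan is to reduce the general solvable system to the homogeneous one and then analyze the fibers of $\pi_q$ by passing to the Smith normal form of $M$. First I would observe that since $[\bm c]_{pq}\in\Col_{pq}(M)$, there is some $\bm x_0\in\mathbb{Z}^n$ with $M\bm x_0\equiv\bm c\pmod{pq}$, hence also $M\bm x_0\equiv\bm c\pmod q$. Translating by $\bm x_0$ gives bijections $V_{pq}(M,\bm c)\to V_{pq}(M,\bm 0)$ and $V_q(M,\bm c)\to V_q(M,\bm 0)$ that commute with $\pi_q$, so the restriction $\pi_q|_{V_{pq}(M,\bm c)}$ is surjective onto $V_q(M,\bm c)$ if and only if $\pi_q|_{V_{pq}(M,\bm 0)}$ is surjective onto $V_q(M,\bm 0)$. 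Thus it suffices to treat the homogeneous case.

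Next I would diagonalize: choose unimodular $P\in GL_m(\mathbb{Z})$, $Q\in GL_n(\mathbb{Z})$ with $PMQ=D=\diag(d_1,\ldots,d_r,0,\ldots,0)$. As in the proof of \autoref{Compute}, left multiplication by $P$ does not change the solution set of the homogeneous system, and the isomorphism $\bm x\mapsto Q^{-1}\bm x$ intertwines $V_N(M,\bm 0)$ with $V_N(D,\bm 0)$ for each modulus $N$, and is compatible with reduction $\pi_q$ since $Q$ has integer entries independent of the modulus. Hence surjectivity of $\pi_q|_{V_{pq}(M,\bm 0)}$ is equivalent to surjectivity of $\pi_q|_{V_{pq}(D,\bm 0)}$. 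Now the system $D\bm y=\bm 0$ decouples into the single-variable congruences $d_iy_i\equiv 0$ for $1\le i\le r$ (with $y_{r+1},\ldots,y_n$ free), so the map $\pi_q$ factors as a product of the coordinatewise maps $\{y\in\mathbb{Z}_{pq}:d_iy\equiv 0\}\to\{y\in\mathbb{Z}_q:d_iy\equiv 0\}$ on the first $r$ coordinates and the obviously surjective maps $\mathbb{Z}_{pq}\to\mathbb{Z}_q$ on the last $n-r$. Therefore $\pi_q|_{V_{pq}(D,\bm 0)}$ is surjective if and only if each of the $r$ coordinate maps is surjective.

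The remaining point is the one-variable statement: for a fixed positive integer $d$, the reduction map from $\{y\in\mathbb{Z}_{pq}:dy\equiv 0\pmod{pq}\}$ to $\{y\in\mathbb{Z}_q:dy\equiv 0\pmod q\}$ is surjective if and only if $\gcd(p,d)\gcd(q,d)\mid d$. Here I would compute both cyclic groups explicitly: the solution set of $dy\equiv 0\pmod N$ is the subgroup of $\mathbb{Z}_N$ of order $\gcd(N,d)$, namely $\big(N/\gcd(N,d)\big)\mathbb{Z}_N$. So the source has order $\gcd(pq,d)$ and the target has order $\gcd(q,d)$, and since reduction mod $q$ is a group homomorphism, surjectivity is equivalent to $\gcd(q,d)\mid\gcd(pq,d)$ together with the image having the right order — more cleanly, surjectivity holds iff the order of the image equals $\gcd(q,d)$, and one checks the image of $\big(pq/\gcd(pq,d)\big)\mathbb{Z}_{pq}$ under reduction is $\big(pq/\gcd(pq,d)\big)\mathbb{Z}_q$, whose order is $q/\gcd\!\big(q,\,pq/\gcd(pq,d)\big)$; setting this equal to $\gcd(q,d)$ and simplifying using $\gcd(pq,d)$ versus $\gcd(p,d)\gcd(q,d)$ yields exactly the condition $\gcd(p,d)\gcd(q,d)\mid d$. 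Assembling the $r$ coordinates, $\pi_q|_{V_{pq}(M,\bm c)}$ is surjective iff $\gcd(p,d_i)\gcd(q,d_i)\mid d_i$ for every $1\le i\le r$.

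The main obstacle I anticipate is the elementary number theory at the last step: getting the exact divisibility condition right requires carefully relating $\gcd(pq,d)$, $\gcd(p,d)$, and $\gcd(q,d)$ — in general $\gcd(pq,d)$ lies between $\gcd(p,d)\gcd(q,d)$ (as a multiple) and $d$, and one must track precisely when the reduction map is onto rather than merely when the orders are comparable. A clean way to organize this is to note that surjectivity of a homomorphism of finite cyclic groups is equivalent to equality of orders of image and target, then reduce everything to a prime-by-prime valuation check: writing $v_\ell$ for the $\ell$-adic valuation, the condition $\gcd(p,d)\gcd(q,d)\mid d$ becomes $\min(v_\ell p,v_\ell d)+\min(v_\ell q,v_\ell d)\le v_\ell d$ for all primes $\ell$, and the surjectivity of the one-variable map unravels to the same inequality. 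Everything else is bookkeeping with the reductions already used in \autoref{Compute} and \autoref{Solvable}.
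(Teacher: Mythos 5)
Your proposal is correct and follows essentially the same route as the paper: translate by a particular solution to reduce to $\bm c=\bm 0$, pass to the Smith normal form $D=\diag(d_1,\ldots,d_r,0,\ldots,0)$ via the commuting bijections induced by $P$ and $Q$, and decouple into $r$ single-variable congruences. The only variation is in the final arithmetic step, where the paper phrases surjectivity as solvability of the congruence $d_i(v_i-k_iq)\equiv 0\pmod{pq}$ in $k_i$ and simplifies the resulting divisibility chain, while you instead compare orders of the cyclic solution subgroups of $\mathbb{Z}_{pq}$ and $\mathbb{Z}_q$ (using that the image is automatically contained in the target, so equality of orders suffices) --- both reductions are equivalent and elementary.
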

\begin{proof}
Since $[\bm c]_{pq}\in\Col_{pq}(M)$, there exists some $\bm x_0\in\mathbb{Z}^n$ such that $[M]_{pq}[\bm x_0]_{pq}=[\bm c]_{pq}$. Immediatelly, $[M]_q[\bm x_0]_q=[\bm c]_q$ holds as well.  We now introduce two translation maps associated with $\bm x_0$ as follows:
\[
\varphi_a:\mathbb{Z}^n_a\to\mathbb{Z}^n_a,\quad [\bm x]_a\mapsto \varphi_a\big([\bm x]_a\big)=[\bm x-\bm x_0]_a, \quad\text{for } a=pq, q.
\]
It is obvious that both maps $\varphi_{pq}$ and $\varphi_q$ are well-defined. Furthermore, the following commutative diagram holds:
\[
\begin{tikzcd}
V_{pq}(M,\bm c) \arrow{r}{\pi_q} \arrow{d}{\varphi_{pq}} & V_q(M,\bm c) \arrow{d}{\varphi_q} \\
V_{pq}(M,\bm 0) \arrow{r}{\pi_q} & V_q(M,\bm 0)
\end{tikzcd}.
\]
Notice that for each $a=pq,q$, the restriction of $\varphi_a$ to $V_a(M, \bm c)$ induces a bijection between $V_a(M, \bm c)$ and $V_a(M, \bm 0)$. Then the restriction $\pi_q|_{V_{pq}(M,\bm c)}:V_{pq}(M,\bm c)\to V_q(M,\bm c)$ is surjective if and only if the restriction $\pi_q|_{V_{pq}(M,\bm 0)}:V_{pq}(M,\bm 0)\to V_q(M,\bm 0)$ is surjective. Therefore, without loss of generality, we may assume $\bm c=\bm 0$.

We now turn to the proof in the special case $\bm c=\bm 0$. From the theory of elementary divisors, there exist unimodular matrices $P\in GL_m(\mathbb{Z})$ and $Q\in GL_n(\mathbb{Z})$ such that
\[
PMQ=D={\rm diag}(d_1\ldots,d_r,0,\ldots,0).
\]
It follows that $V_a(M,\bm 0)=V_a(PM,\bm 0)$ for $a=pq,q$. Since $Q$ is an invertible matrix, the following group homomorphisms
\[
\phi_a: V_a(M,\bm 0)=V_a(PM,\bm 0)\to V_a(D,\bm 0),\quad\bm x\mapsto Q^{-1}\bm x,\quad\text{for }a=pq, q
\]
are bijective. Then the following commutative diagram holds:
\[
\begin{tikzcd}
V_{pq}(M,\bm 0) \arrow{r}{\pi_q} \arrow{d}{\phi_{pq}} & V_q(M,\bm 0) \arrow{d}{\phi_q} \\
V_{pq}(D,\bm 0) \arrow{r}{\pi_q} & V_q(D,\bm 0)
\end{tikzcd}.
\]
This implies that the restriction $\pi_q|_{V_{pq}(M,\bm 0)}:V_{pq}(M,\bm 0)\to V_q(M,\bm 0)$ is surjective if and only if the restriction $\pi_q|_{V_{pq}(D,\bm 0)}:V_{pq}(D,\bm 0)\to V_q(D,\bm 0)$ is surjective. This is further equivalent to the statement  that for any $[\bm v]_q\in V_q(D,\bm 0)$ with $\bm v=(v_1,\ldots,v_n)^T\in\mathbb{Z}^n$, there exists $\bm u=(u_1,\ldots,u_n)^T\in\mathbb{Z}^n$ such that
\[
D\bm u\equiv\bm 0\pmod{pq}\quad\And\quad \bm u\equiv\bm v\pmod q.
\]
Equivalently, for any $\bm v\in\mathbb{Z}^n$ satisfying $q\mid d_iv_i$ for $i=1,\dots,r$, there exist integers $k_1,\dots,k_r$ such that
\[
d_i(v_i-k_iq)\equiv0\pmod{pq},\quad i=1,\dots,r.
\]
By number theory, the existence of such integers $k_i$ is equivalent to the condition that ${\rm gcd}(d_iq,pq)\mid d_iv_i$ for all $1\le i\le r$. Alternatively, for any $\bm v\in\mathbb{Z}^n$, if $\frac{q}{{\rm gcd}(q,d_i)}\mid v_i$ for each $i=1,\ldots,r$, then  $\frac{q{\rm gcd}(p,d_i)}{{\rm gcd}(pq,d_i)}\,\Big|\,v_i$ holds for all $1\le i\le r$. From number theory again, the above statement holds precisely when
\[
\frac{q{\rm gcd}(p,d_i)}{{\rm gcd}(pq,d_i)}\Big|\frac{q}{{\rm gcd}(q,d_i)}\Longleftrightarrow {\rm gcd}(p,d_i){\rm gcd}(q,d_i)\mid{\rm gcd}(pq,d_i) \quad(i=1,\ldots,r).
\]
It follows from ${\rm gcd}(p,d_i){\rm gcd}(q,d_i)|pq$ that the above divisibility relation is further equivalent to ${\rm gcd}(p,d_i){\rm gcd}(q,d_i)\mid d_i$ for all $1\le i\le r$. We complete the proof.
\end{proof}

With the preceding lemma established, we now turn our attention to the desired result. Remarkably, \autoref{lem:surjective modular map} indicates that the hypothesis ${\rm gcd}(q,d_i){\rm gcd}(p,d_i)\mid d_i$ in \autoref{Main5} may be optimal for guaranteeing the conclusion.
\begin{theorem}\label{Main5}
Let $0<d_1\mid\cdots\mid d_r$ be the invariant factors of $B$. For any $p,q\in\mathbb{Z}_{>0}$, if  ${\rm gcd}(p,d_i){\rm gcd}(q,d_i)\mid d_i$ for all $1\le i\le r$ and $[\bm b]_{pq}\in\Col_{pq}(B)$, then
\[
\#M\big(\mathcal{A}_q^{(B,\bm b)}\big)\le \#M\big(\mathcal{A}_{pq}^{(B,\bm b)}\big).
\]
Moreover, when $l=0$, for all $p,q\in\mathbb{Z}_{>0}$, we have 
\[
\#M(\mathcal{A}_q)\le \#M(\mathcal{A}_{pq}).
\]
\end{theorem}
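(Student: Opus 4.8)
The plan is to build an injection from $M\big(\mathcal{A}_q^{(B,\bm b)}\big)$ into $M\big(\mathcal{A}_{pq}^{(B,\bm b)}\big)$ out of the reduction map $\pi_q\colon\mathbb{Z}^n_{pq}\to\mathbb{Z}^n_q$ introduced above. The decisive input is \autoref{lem:surjective modular map}: since $[\bm b]_{pq}\in\Col_{pq}(B)$ and $\gcd(p,d_i)\gcd(q,d_i)\mid d_i$ holds for every invariant factor $d_i$ of $B$, the restriction $\pi_q|_{V_{pq}(B,\bm b)}\colon V_{pq}(B,\bm b)\to V_q(B,\bm b)$ is surjective. I would first note that $[\bm b]_{pq}\in\Col_{pq}(B)$ also forces $[\bm b]_q\in\Col_q(B)$ by \autoref{Solvable}, so $V_q(B,\bm b)$ is empty precisely when $V_{pq}(B,\bm b)$ is; in that case both complements are empty and there is nothing to prove.

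Next I would record the one-line observation that makes this direction work: non-membership in a hyperplane lifts \emph{upward} along $\pi_q$. Concretely, suppose $\bm x\in\mathbb{Z}^n$ satisfies $[\bm x]_{pq}\in V_{pq}(B,\bm b)$ and $\pi_q\big([\bm x]_{pq}\big)=[\bm x]_q\in M\big(\mathcal{A}_q^{(B,\bm b)}\big)$. Then $\bm\alpha^T_i\bm x\not\equiv a_i\pmod q$ for every $i\in[m]$, and since $q\mid pq$ this immediately yields $\bm\alpha^T_i\bm x\not\equiv a_i\pmod{pq}$, i.e. $[\bm x]_{pq}\notin H_{i,pq}^{(B,\bm b)}$ for all $i$; hence $[\bm x]_{pq}\in M\big(\mathcal{A}_{pq}^{(B,\bm b)}\big)$. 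Thus the entire $\pi_q$-fibre over any point of $M\big(\mathcal{A}_q^{(B,\bm b)}\big)$ lies inside $M\big(\mathcal{A}_{pq}^{(B,\bm b)}\big)$, and by the surjectivity above each such fibre is nonempty. Choosing one preimage per point gives a section $\sigma$ with $\pi_q\circ\sigma=\id$, hence an injection $\sigma\colon M\big(\mathcal{A}_q^{(B,\bm b)}\big)\hookrightarrow M\big(\mathcal{A}_{pq}^{(B,\bm b)}\big)$, and therefore $\#M\big(\mathcal{A}_q^{(B,\bm b)}\big)\le\#M\big(\mathcal{A}_{pq}^{(B,\bm b)}\big)$. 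Summing over full fibres would in fact give the sharper bound $\#M\big(\mathcal{A}_{pq}^{(B,\bm b)}\big)\ge\big(\#V_{pq}(B,\bm b)/\#V_q(B,\bm b)\big)\cdot\#M\big(\mathcal{A}_q^{(B,\bm b)}\big)$, but this is not needed.

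For the ``moreover'' part, when $l=0$ the matrix $B$ has no rows, so $r=0$, the hypothesis $\gcd(p,d_i)\gcd(q,d_i)\mid d_i$ for $1\le i\le r$ is vacuous, and $[\bm b]_{pq}\in\Col_{pq}(B)$ holds trivially; moreover $V_{pq}(B,\bm b)=\mathbb{Z}_{pq}^n$ and $V_q(B,\bm b)=\mathbb{Z}_q^n$, so $\mathcal{A}_{pq}^{(B,\bm b)}=\mathcal{A}_{pq}$ and $\mathcal{A}_q^{(B,\bm b)}=\mathcal{A}_q$, and the statement is the special case of the first part.

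I expect the genuine difficulty to be entirely absorbed into \autoref{lem:surjective modular map}; the residual subtlety in the present argument is conceptual rather than computational. One must resist trying to restrict $\pi_q$ directly to a map $M\big(\mathcal{A}_{pq}^{(B,\bm b)}\big)\to M\big(\mathcal{A}_q^{(B,\bm b)}\big)$, because non-membership in $H_{i,pq}$ need not descend to non-membership in $H_{i,q}$ (a residue can vanish modulo $q$ without vanishing modulo $pq$). Lifting the points of $M\big(\mathcal{A}_q^{(B,\bm b)}\big)$ \emph{up} to $M\big(\mathcal{A}_{pq}^{(B,\bm b)}\big)$, rather than pushing down, is exactly what repairs this, and it is also where the solvability/invariant-factor hypothesis is used, via surjectivity of $\pi_q$ on solution sets.
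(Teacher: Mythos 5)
Your proposal is correct and follows essentially the same route as the paper: both arguments invoke \autoref{lem:surjective modular map} to get surjectivity of $\pi_q|_{V_{pq}(B,\bm b)}$ and then use the containment $\pi_q\big(H_{i,pq}^{(B,\bm b)}\big)\subseteq H_{i,q}^{(B,\bm b)}$ (your ``non-membership lifts upward'') to conclude that the fibres over $M\big(\mathcal{A}_q^{(B,\bm b)}\big)$ land in $M\big(\mathcal{A}_{pq}^{(B,\bm b)}\big)$. The paper phrases this as a preimage containment rather than choosing a section, but the counting argument is identical.
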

\begin{proof}
Notice that the map $\pi_q:\mathbb{Z}^n_{pq}\to\mathbb{Z}^n_q$ defined at the beginning of  \autoref{Sec3-2}, when restricted to $V_{pq}(B,\bm b)$, is a surjective map from  $V_{pq}(B,\bm b)$ to $V_q(B,\bm b)$ by \autoref{lem:surjective modular map}. With the simple fact that $\pi_q\big(H_{i,pq}^{(B,\bm b)}\big)\subseteq H_{i,q}^{(B,\bm b)}$ for each $i\in[m]$, we conclude that
\[
\pi^{-1}_q\Big(V_q(B,\bm b)-\bigcup_{i=1}^mH_{i,q}^{(B,\bm b)}\Big)\subseteq V_{pq}(B,\bm b)-\bigcup_{i=1}^mH_{i,pq}^{(B,\bm b)}.
\]
This immediately yields
\[
\#M\big(\mathcal{A}_q^{(B,\bm b)}\big)=\#\Big(V_q(B,\bm b)-\bigcup_{i=1}^mH_{i, q}^{(B,\bm b)}\Big)\le\#\Big(V_{pq}(B,\bm b)-\bigcup_{i=1}^mH_{i, pq}^{(B,\bm b)}\Big)=\#M\big(\mathcal{A}_{pq}^{(B,\bm b)}\big).
\]
Moreover, the relation $\#M(\mathcal{A}_q)\le \#M(\mathcal{A}_{pq})$ is trivial. We complete the proof.
\end{proof}
\section{Applications}\label{Sec4}
In this section, we primarily explore the applications of characteristic quasi-polynomials in graph coloring and flow problems. For this purpose, we begin by introducing some necessary concepts about graphs. The graphs $G$ with vertex set $V(G)$ and edge set $E(G)$ considered in this section are finite, allowing multiple edges and loops. For an edge subset $S\subseteq E(G)$, $G-S$ is the spanning subgraph obtained from $G$ by removing the edges in $S$. Denote by $G|S$ the spanning subgraph with edges $S$, that is, $G|S=G-(E(G)-S)$. The {\bf rank function} of $G$ is given by
\[
r(G|S):=|V(G)|-c(G|S),\quad \forall\, S\subseteq E(G),
\]
where $c(G)$ denotes the number of (connected) components of $G$. The {\bf nullity function} of  $G$ is defined by 
\[
m(G|S):=|S|-r(G|S)=|S|-|V(G)|+c(G|S),\quad\forall\,S\subseteq E(G).
\]
An {\bf orientation} of a graph is an assignment of direction to each edge. Every edge together with a direction is called an {\bf oriented edge}. A more general setting and background on graphs can be found in the book \cite{Bondy2008}. 

Graph coloring is a significant subfield in graph theory that originated in the middle of the 19th century with the famous Four Color Problem. A {\bf proper vertex coloring} of a graph $G$ is a vertex coloring $c:V(G)\to S$ such that $c(u)\ne c(v)$ whenever vertices $u$ and $v$ are adjacent. In 1912, Birkhoff \cite{Birkhoff1912} first introduced the chromatic polynomial to count the number of proper (vertex) colorings of planar graphs, which was extended to general graphs by Whitney \cite{Whitney1932, Whitney1932-1}.

Nowhere-zero $\mathbb{Z}_q$-flows or modular $q$-flows in graphs, were first defined by Tutte in \cite{Tutte1949,Tutte1954} as the dual concept to graph coloring. Correspondingly, Tutte also introduced  the flow polynomial function to count nowhere-zero flows. A comprehensive survey of nowhere-zero flows can be found in  \cite{Jaeger1988,Seymour1995}. Tutte further proposed several famous conjectures on nowhere-zero flows, known as the 5-flow conjecture \cite{Tutte1954}, 4-flow conjecture \cite{Tutte1966} and 3-flow conjecture (see Unsolved Problem 48, in \cite{Bondy1976}). These conjectures reveal that nowhere-zero flows are closely related to the edge connectivity of graphs. To address the 3-flow conjecture, Jaeger, Linial, Payan and Tarsi \cite{Jaeger1992} introduced the group connectivity of graphs in 1992 by generalizing the nowhere-zero flow to a nonhomogeneous form. Dually, they also defined group coloring as the dual concept of group connectivity, extending ordinary proper vertex colorings. Group connectivity and group colorings of graphs are nicely surveyed in \cite{LSZ2011}. 

From a different motivation, Zaslavsky \cite{Zaslavsky1995} defined and studied the balanced chromatic, dichromatic and Whitney number polynomials of biased graphs in 1995, which generalizes those of ordinary graphs. Specializing to gain graphs, he proved that the balanced chromatic polynomial of a gain graph precisely counts the proper colorations (also called group colorings). From a geometric perspective, ordinary proper colorings of a graph can be studied through its associated graphic arrangement. Extending this idea to gain graphs, Zaslavsky introduced affinographic arrangements to investigate their enumerative properties in \cite{Zaslavsky2003}, and showed that the balanced chromatic polynomial of a gain graph agrees with the characteristic polynomial of the associated affinographic arrangement. In \cite{FZ2007}, Forge and  Zaslavsky explored the number of integral proper colorations of a rooted integral gain graph. As one application, they provided a formula for calculating $\mathbb{Z}_q$-colorings even for small moduli, where Athanasiadi's general polynomial formula fails to apply. For further related topics, see \cite{FZ2016}. Inspired by their work, we realize that our method is adaptable to the enumerative study of $\mathbb{Z}_q$-colorings, and in \autoref{Modular-Coloring}, we obtain an alternative counting formula. The two treatments necessarily overlap, since they concern the same objects, but only partially; to keep the paper self-contained, we therefore give the full account we require.

Most recently, Kochol demonstrated in\cite{Kochol2022} that there exists a polynomial function counting nowhere-zero nonhomogeneous form flows, called the assigning polynomial. Subsequently, Fu, Ren and Wang \cite{FRW2025} provided an explicit formula for assigning polynomials and examined their coefficients. In fact, our approach is also well-suited to the study of nowhere-zero nonhomogeneous form flows.

From now on, we consider relevant topics associated to the additive cyclic group $\mathbb{Z}_q$ ($q\in\mathbb{Z}_{>0}$) and a fixed orientation $D$ of $G$. Let $w:E(G)\to\mathbb{Z}_q$. A {\bf $(\mathbb{Z}_q, w)$-coloring} of $G$ is a vertex coloring $c: V(G)\to\mathbb{Z}_q$ such that, for each oriented edge $uv$ (oriented from $u$ to $v$) of $G$, $c(v)-c(u)\ne w(uv)$. Naturally, when $w\equiv0$, $(\mathbb{Z}_q,0)$-coloring reduces to the ordinary proper $q$-coloring. Denoted by $C(G,w;\mathbb{Z}_q)$ the set of all $(\mathbb{Z}_q, w)$-colorings.

Let $b:V(G)\to\mathbb{Z}_q$. A {\bf $(\mathbb{Z}_q,b)$-flow} of $G$ is an edge-valued function $f:E(G)\to\mathbb{Z}_q$ such that
\[
\sum_{e \in E^{+}(v)} f(e)-\sum_{e \in E^{-}(v)} f(e)=b(v), \quad\forall\, v\in V(G).
\]
Here $E^+(v)$ denotes the set of edges for which $v$ is the head, and  $E^-(v)$ denotes the set of edges for which $v$ is the tail. Such a flow is {\bf nowhere-zero} if $f(e)\ne 0$ for all $e\in E(G)$. When $b\equiv0$, we recover the classical nowhere-zero $\mathbb{Z}_q$-flow. Let $F^*(G,b;\mathbb{Z}_q)$ denote the set of all nowhere-zero $(\mathbb{Z}_q,b)$-flows.

We concentrate on the enumerative study of group colorings and flows within the framework of truncated integral arrangements. To establish this connection, let us review the incidence matrix of a graph associated to its orientation. The {\bf incidence matrix} of $G$ is the $|V(G)|\times |E(G)|$ integral matrix $M_G:=(m_{ve})$ whose rows and columns are indexed by the vertices and edges, where, for a vertex $v$ and an edge $e$,
\[
m_{ve}:=\begin{cases}
1, & \text{ if } e \text{ is a link and } v \text{ is the head of } e;\\
-1,& \text{ if } e \text{ is a link and } v \text{ is the tail of } e;\\
0,& \text{ otherwise}.
\end{cases}
\]

For simplicity, we assume $V(G)=\{1,2,\ldots,n\}$ and $|E(G)|=m$. We denote the oriented edge from $i$ to $j$ as  $e_{ij}$ and its opposite orientation  $-e_{ij}$. We now consider the {\bf affinographic arrangement}
\[
\mathcal{A}_{(G,\bm w)}:=\big\{H_{e_{ij}}: x_j-x_i=w_{ij}\mid e_{ij}\in E(G)\big\},
\]
in $\mathbb{R}^n$ (see \cite{FZ2007,Zaslavsky2003}), whose defining matrix is $[M_G^T,\bm w]$ with $\bm w=(w_{ij})_{e_{ij}\in E(G)}\in\mathbb{Z}^m$. We need to be especially careful when $G$ contains a loop $e_{ii}\in E(G)$. If $w_{ii}=0$, such loop $e_{ii}$ corresponds precisely to the degenerate hyperplane $H_{e_{ii}}:x_i-x_i=0$, which is exactly the whole space $\mathbb{R}^n$. In this case, the characteristic polynomial $\chi(\mathcal{A}_{(G,\bm w)},t)$ is the zero polynomial. Naturally, $[\bm w]_q$ can be viewed as a function $w:E(G)\to\mathbb{Z}_q$ satisfying $w( e_{ij})=[w_{ij}]_q$ for all $e_{ij}\in E(G)$. In this context, the complement 
\[
M(\mathcal{A}_{(G,\bm w),q})=\mathbb{Z}_q^n-\bigcup_{e_{ij}\in E(G)}H_{e_{ij},q}
\]
of $\mathcal{A}_{(G,\bm w),q}$ is precisely the set of all $(\mathbb{Z}_q,w)$-colorings. Namely, the following relation holds:
\begin{equation}\label{Coloring-Complement}
C(G,w;\mathbb{Z}_q)=M(\mathcal{A}_{(G,\bm w),q}).
\end{equation}
Accordingly, the number of $(\mathbb{Z}_q,w)$-colorings of $G$ can be computed by enumerating the points in the complement of the $q$-reduction of the associated affinographic arrangement.

It is worth noting that $M_G$ is totally unimodular: every square submatrix has determinant in $\{-1,0,1\}$. Consequently, all invariant factors of $M_G$ are equal to $1$.  Hence, for any subset $S\subseteq E(G)$, the submatrix $[M_G]_S$ is also unimodular and all its invariant factors remain $1$. In addition, the following relation holds trivially:
\[
r\big([M_G^T]_S\big):={\rm rank}\big([M_G^T]_S\big)=r(G|S),\quad\forall\,S\subseteq E(G).
\]
According to the Interlacing Divisibility Theorem ( see \autoref{IDT}), for every subset $S\subseteq E(G)$, the first $r(G|S)$ invariant factors of $[M_G^T,\bm w]_S$ are equal to $1$. As with $q_0$, we set
\[
q_{\bm w}:=\max\Big\{d\big([M_G^T,\bm w]_S\big):S\subseteq E(G)\text{ and }r\big([M_G^T,\bm w]_S\big)\ne r(G|S)\Big\}, 
\]
where $d\big([M_G^T,\bm w]_S$ is the maximal invariant factor of $[M_G^T,\bm w]_S$. In analogy with $\tilde{d}_J(q)$, for any subset $S\subseteq E(G)$, we define 
\[
\tilde{d}_S(q):=
\begin{cases}
1,& \text{ if } r\big([M_G^T,\bm w]_S\big)= r(G|S);\\
1,&\text{ if } r\big([M_G^T,\bm w]_S\big)\ne r(G|S)\text{ and }d\big([M_G^T,\bm w]_S\big)\equiv0\pmod q;\\
0,& \text{ otherwise}.
\end{cases}
\]

With the above preparations, as a direct consequence of \autoref{Main1},  we can compute the $(\mathbb{Z}_q,w)$-colorings by the formula:
\[
\#C(G,w;\mathbb{Z}_q)=\sum_{S\subseteq E(G)}(-1)^{|S|}\tilde{d}_S(q)q^{c(G\mid S)}.
\]
Moreover, there exists a polynomial function $\chi(G,w;t)$ such that $\#C(G,w;\mathbb{Z}_q)=\chi(G,w;q)$ for all integers $q>q_{\bm w}$, and $\chi(G,w;t)$ can be written explicitly as
\[
\chi(G,w;t)=\sum_{S\subseteq E(G),\,r([M_G^T,\bm w]_S)=r(G\mid S)}(-1)^{|S|}t^{c(G\mid S)}.
\]
The polynomial  is called the {\bf modular chromatic polynomial} by Zaslavsky \cite{FZ2007}. According to \autoref{Char-Quasi}, $\chi(G,w;t)$ coincides with the characteristic polynomial of the affinographic arrangement $\mathcal{A}_{(G,\bm w)}$, which was first discovered in \cite{FZ2007}. Finally, \autoref{Main5} applied to \eqref{Coloring-Complement} yields the comparison relation: $\#C(G,w;\mathbb{Z}_q)\le \#C(G,w';\mathbb{Z}_{pq})$ for any $p,q\in\mathbb{Z}_{>0}$, where $w=[\bm w]_q$ and $w'=[\bm w]_{pq}$.

We summarize the results obtained so far as follows:
\begin{corollary}\label{Modular-Coloring}
Let $w:E(G)\to\mathbb{Z}_q$ with $q\in\mathbb{Z}_{>0}$ and $\bm w\in\mathbb{Z}^m$ satisfy $w=[\bm w]_q$. The following results hold:
\begin{itemize}
\item[{\rm(1)}] The counting formula of $C(G,w;\mathbb{Z}_q)$ is 
\[
\#C(G,w;\mathbb{Z}_q)=\sum_{S\subseteq E(G)}(-1)^{|S|}\tilde{d}_S(q)q^{c(G|S)}.
\]
Moreover, there exists a polynomial $\chi(G,w;t)$ such that $\#C(G,w;\mathbb{Z}_q)=\chi(G,w;q)$ for all integers $q>q_{\bm w}$, and $\chi(G,w;t)$ can be written as
\[
\chi(G,w;t)=\sum_{S\subseteq E(G),\,r([M_G^T,\bm w]_S)=r(G|S)}(-1)^{|S|}t^{c(G|S)}.
\]
\item[{\rm(2)}] $\chi(G,w;t)=\chi(\mathcal{A}_{(G,\bm w)},t)$. 
\item[{\rm(3)}]For any $p,q\in\mathbb{Z}_{>0}$, if $w'=[\bm w]_{pq}$, then $\#C(G,w;\mathbb{Z}_q)\le \#C(G,w';\mathbb{Z}_{pq})$.
\end{itemize}
\end{corollary}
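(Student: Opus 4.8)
The plan is to obtain all three parts by specializing the general machinery of \autoref{Sec2} and \autoref{Sec3} to the affinographic arrangement, taking $A=M_G^T$, $\bm a=\bm w$, and $l=0$ (no truncation), so that $\mathcal{A}^{(B,\bm b)}=\mathcal{A}=\mathcal{A}_{(G,\bm w)}$ and, by \eqref{Coloring-Complement}, $M(\mathcal{A}_{(G,\bm w),q})=C(G,w;\mathbb{Z}_q)$ for every $q$. The single structural input is that $M_G$ is totally unimodular: for each $S\subseteq E(G)$ every invariant factor of $[M_G^T]_S$ equals $1$, and by the Interlacing Divisibility Theorem (\autoref{IDT}) the first $r(G\mid S)$ invariant factors of $[M_G^T,\bm w]_S$ are likewise $1$.

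For part (1), I would translate the abstract data attached to $C_J$, $\bar C_J$ (indexing $J$ by subsets $S\subseteq E(G)$) into graph terms: $r(J)=r\big([M_G^T]_S\big)=r(G\mid S)$, hence $n-r(J)=c(G\mid S)$; all $d_{J,j}=1$, so $d_J(q)=1$ and $\rho_C=\mathrm{lcm}(\ldots)=1$; and $\tilde d_J(q)$ reduces exactly to the $\tilde d_S(q)$ defined above, since when $\bar r(J)=r(J)$ the defining condition of $\tilde d_J$ is vacuously satisfied, while when $\bar r(J)=r(J)+1$ it becomes, via the convention $d_{J,\bar r(J)}=0$, precisely $q\mid \bar d_{J,\bar r(J)}=d\big([M_G^T,\bm w]_S\big)$. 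One checks in the same way that the threshold $q_0$ of \autoref{Main1} equals $q_{\bm w}$. Feeding these identifications into \eqref{Counting-Formula} and \eqref{CQP} yields the stated counting formula, and, because $\rho_C=1$, an honest polynomial $\chi(G,w;t):=\chi^{\text{quasi}}(\mathcal{A}_{(G,\bm w)},t)$ that agrees with $\#C(G,w;\mathbb{Z}_q)$ for all $q>q_{\bm w}$ and whose expansion is \eqref{CQP} restricted to the sets $S$ with $r\big([M_G^T,\bm w]_S\big)=r(G\mid S)$.

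Part (2) then follows from \autoref{Char-Quasi}: since $\rho_C=1$, the hypothesis $\gcd(q,\rho_C)=1$ holds for every $q>q_{\bm w}$, so $\chi(\mathcal{A}_{(G,\bm w)},q)=\chi^{\text{quasi}}(\mathcal{A}_{(G,\bm w)},q)=\chi(G,w;q)$ for infinitely many $q$, and two polynomials agreeing on infinitely many integers are identical. For part (3), I would invoke the $l=0$ case of \autoref{Main5}, which gives $\#M(\mathcal{A}_q)\le\#M(\mathcal{A}_{pq})$ for all $p,q\in\mathbb{Z}_{>0}$ (the reduction map $\mathbb{Z}_{pq}^n\to\mathbb{Z}_q^n$ is onto and carries the preimage of $M(\mathcal{A}_q)$ into $M(\mathcal{A}_{pq})$); applying this to $\mathcal{A}_{(G,\bm w)}$ with $w=[\bm w]_q$ and $w'=[\bm w]_{pq}$ and using \eqref{Coloring-Complement} gives $\#C(G,w;\mathbb{Z}_q)\le\#C(G,w';\mathbb{Z}_{pq})$.

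I do not expect a genuine obstacle: the work is entirely the bookkeeping in part (1), namely verifying that the dictionary between $(\rho_C,q_0,\tilde d_J)$ for the abstract arrangement and $(1,q_{\bm w},\tilde d_S)$ in the graph setting is exactly right, which rests only on total unimodularity of $M_G$ and \autoref{IDT}. The one point deserving a word is a loop $e_{ii}\in E(G)$, which contributes a zero column to $M_G$: the associated ``hyperplane'' $H_{e_{ii}}$ is then empty when $w_{ii}\not\equiv 0$ and all of $\mathbb{R}^n$ when $w_{ii}\equiv 0$, but both cases are already absorbed by the conventions defining $\tilde d_S(q)$ and require no special treatment.
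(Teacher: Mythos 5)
Your argument is correct and matches the paper's own derivation, which is carried out in the paragraphs immediately preceding the corollary: both exploit the total unimodularity of $M_G$ so that every invariant factor of $[M_G^T]_S$ equals $1$ (hence $\rho_C=1$), invoke \autoref{IDT} to control the invariant factors of $[M_G^T,\bm w]_S$, and then read off the three parts from \autoref{Main1}, \autoref{Char-Quasi}, and \autoref{Main5}. The dictionary $q_0\leftrightarrow q_{\bm w}$, $\tilde d_J\leftrightarrow\tilde d_S$, $n-r(J)\leftrightarrow c(G\mid S)$ that you verify in part (1) is precisely the bookkeeping the paper records before stating the corollary.
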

Notably, part (1) in \autoref{Modular-Coloring} states that for all integers $q>q_{\bm w}$,  the number of $(\mathbb{Z}_q,w)$-colorings of $G$ is characterized by the polynomial $\chi(G,w;t)$.  Moreover, the formula for $\chi(G,w;t)$ shows that the modular chromatic polynomial is unchanged under any orientation $D'$ whenever it leads to the same family of edge subsets $S$ satisfying $r([M_G^T,\bm w]_S)=r(G|S)$ as the original orientation $D$. In fact, these edge subsets correspond exactly to the balanced subsets proposed by Zaslavsky. Thus, in Zaslavsky's terminology, the modular chromatic polynomials coincide under different orientations precisely when their balanced subsets are identical (see \cite{FZ2007}). Additionally, parts (2) and (3) in \autoref{Modular-Coloring} reveal that for all integers $q>q_{\bm w}$, we can compute $\chi(G,w;q)$ by $\chi(\mathcal{A}_{(G,\bm w)},t)$. This is the approach pioneered by Athanasiadis, in which it is not necessary to restrict $q$ to be a prime power as Athanasiadis did. The same phenomenon, expressed in the language of gain graphs, appears in \cite{FZ2007}.

Let $\bm b=(b_i)_{i\in V(G)}\in\mathbb{Z}^n$. Similarly, when a function $b:V(G)\to\mathbb{Z}_q$ satisfies $b(i)=[b_i]_q$ for all $i\in V(G)$, we understand it as the $q$-reduction of $\bm b$. In this view, the solution space $V_q(M_G,\bm b)$ of the system $M_G\bm x=\bm b\pmod q$  is exactly the set of all $(\mathbb{Z}_q,b)$-flows of $G$. Associated with $E(G)$, let $\mathcal{A}_{E(G)}$ be the {\bf coordinate arrangement} in $\mathbb{R}^m$ given by
\[
\mathcal{A}_{E(G)}:=\big\{H_e:x_e=0\mid e\in E(G)\big\},
\]
whose defining matrix is the matrix $[I,\bm 0]$, where $I$ denotes the $m\times m$ identity matrix whose rows and columns are indexed by the same edge labels as $M_G$. Recall that a $(\mathbb{Z}_q,b)$-flow $f$ is nowhere-zero if and only if $f(e)\ne 0$ for all edges $e$. Naturally, we consider the truncated arrangement $\mathcal{A}_{E(G)}^{(M_G,\bm b)}$ of $\mathcal{A}_{E(G)}$ by the matrix $[M_G,\bm b]$. Then the complement
\[
M\big(\mathcal{A}_{E(G),q}^{(M_G,\bm b)}\big)=V_q(M_G,\bm b)-\bigcup_{e\in E(G)}H_{e,q}^{(M_G,\bm b)}
\]
of the $q$-reduction of $\mathcal{A}_{E(G)}^{(M_G,\bm b)}$ corresponds precisely to the set of all nowhere-zero $(\mathbb{Z}_q,b)$-flows of $G$. Thus, the truncated arrangement $\mathcal{A}_{E(G)}^{(M_G,\bm b)}$ modulo any positive integer $q$  yields a framework that is adaptable to counting nowhere-zero $(\mathbb{Z}_q,b)$-flows. 

Let $M_S=\begin{bmatrix}M_G\\I_S\end{bmatrix}$ and $\bar{M}_S=\begin{bmatrix}M_G&\bm b\\I_S&\bm 0\end{bmatrix}$ for any $S\subseteq E(G)$. Because $M_G$ is unimodular, it is straightforward to check that every matrix $M_S$ is also unimodular and all its invariant factors are equal to $1$. Using the Interlacing Divisibility Theorem again, for every subset $S\subseteq E(G)$, 
the first $r(M_S)$ invariant factors of $\bar{M}_S$ are still equal to $1$. Notice the simple fact that
\[
 r(M_S):={\rm rank}(M_S)=m-m(G-S),\quad\forall\, S\subseteq E(G).
\]
Similar to the previous $q_{\bm w}$ and $\tilde{d}_S(q)$, we define
\[
q_{\bm b}:=\max\big\{d(\bar{M}_S):S\subseteq E(G)\text{ and }r(\bar{M}_S)\ne m-m(G-S)\big\}, 
\] 
and 
\[
\tilde{d}_S(q):=
\begin{cases}
1,& \text{ if } r(\bar{M}_S)=m-m(G-S);\\
1,&\text{ if }r(\bar{M}_S)\ne m-m(G-S)\text{ and }d(\bar{M}_S)\equiv0\pmod q;\\
0,& \text{ otherwise}.
\end{cases}
\]

With the above necessary groundwork, using the similar argument as \autoref{Modular-Coloring}, the next result follows immediately. 
\begin{corollary}\label{Modular-Flow}
Let $b:E(G)\to\mathbb{Z}_q$ with $q\in\mathbb{Z}_{>0}$ and $\bm b\in\mathbb{Z}^n$ satisfy $b=[\bm b]_q$. The following results hold:
\begin{itemize}
\item[{\rm(1)}] The counting formula of $F^*(G,b;\mathbb{Z}_q)$ is 
\[
\#F^*(G,b;\mathbb{Z}_q)=\sum_{S\subseteq E(G)}(-1)^{|S|}\tilde{d}_S(q)q^{m(G-S)}.
\]
Moreover, there exists a polynomial $\tau(G,b;t)$ such that $\#F^*(G,b;\mathbb{Z}_q)=\tau(G,b;q)$ for all integers $q>q_{\bm b}$, and $\tau(G,b;t)$ can be written as
\[
\tau(G,b;t)=\sum_{S\subseteq E(G),\,r(\bar{M}_S)=m-m(G-S)}(-1)^{|S|}t^{m(G-S)}.
\]
\item[{\rm(2)}] $\tau(G,b;t)=\chi\big(\mathcal{A}_{E(G)}^{(M_G,\bm b)},t\big)$. 
\item[{\rm(3)}]  For any $p,q\in\mathbb{Z}_{>0}$, if $b'=[\bm b]_{pq}$ and $G$ has a $(\mathbb{Z}_{pq},b')$-flow, then 
\[\#F^*(G,b;\mathbb{Z}_q)\le \#F^*(G,b';\mathbb{Z}_{pq}).\]
\end{itemize}
\end{corollary}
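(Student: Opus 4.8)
The plan is to follow the blueprint of the proof of \autoref{Modular-Coloring}, now applied to the truncated coordinate arrangement $\mathcal{A}_{E(G)}^{(M_G,\bm b)}$, which in the notation of \autoref{Sec2} is the truncation of $\mathcal{A}=\mathcal{A}(I,\bm 0)$ by the matrix $[M_G,\bm b]$, i.e. $A=I$, $\bm a=\bm 0$, $B=M_G$. As recalled just before the statement, $M\big(\mathcal{A}_{E(G),q}^{(M_G,\bm b)}\big)=F^*(G,b;\mathbb{Z}_q)$, so the whole corollary will be obtained by transporting \autoref{Main1}, \autoref{Char-Quasi} and \autoref{Main5} through this identification.

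First I would record the simplification forced by unimodularity. Since $M_G$ is totally unimodular, adjoining any set of standard unit rows $I_S$ keeps the matrix totally unimodular, so each $C_J=M_S$ has all invariant factors equal to $1$; by the Interlacing Divisibility Theorem (\autoref{IDT}) the first ${\rm rank}(M_S)$ invariant factors of $\bar C_J=\bar M_S$ are then also $1$. Consequently the lcm period is $\rho_C={\rm lcm}\big(d_{J,r(J)}\colon \bar r(J)=r(J)\big)=1$, the threshold $q_0$ coincides with $q_{\bm b}$, and the general weight $\tilde d_J(q)$ of \autoref{Sec2-2} collapses to exactly the three-case $\tilde d_S(q)$ defined just before the corollary: value $1$ when $\bar r(J)=r(J)$; value $1$ when $\bar r(J)=r(J)+1$ and $q\mid d(\bar M_S)$; value $0$ otherwise. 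Using ${\rm rank}(M_S)=m-m(G-S)$, so that the exponent $n-r(J)$ specializes to $m(G-S)$, \autoref{Main1} then gives part (1): the counting formula for $\#F^*(G,b;\mathbb{Z}_q)$, and for $q>q_{\bm b}$ only the subsets with $r(\bar M_S)=m-m(G-S)$ survive, producing the polynomial $\tau(G,b;t)=\chi^{\text{quasi}}\big(\mathcal{A}_{E(G)}^{(M_G,\bm b)},t\big)$, which is a genuine polynomial because $\rho_C=1$.

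Part (2) is then immediate: since $\rho_C=1$ we have ${\rm gcd}(q,\rho_C)=1$ for every $q$, so \autoref{Char-Quasi} applies and identifies the unique constituent $f_1=\tau(G,b;t)$ with $\chi\big(\mathcal{A}_{E(G)}^{(M_G,\bm b)},t\big)$; in the degenerate case $V(M_G,\bm b)=\emptyset$ both sides are the zero polynomial. For part (3), the hypothesis ${\rm gcd}(p,d_i){\rm gcd}(q,d_i)\mid d_i$ in \autoref{Main5} holds automatically because every invariant factor $d_i$ of $M_G$ equals $1$, and the remaining hypothesis $[\bm b]_{pq}\in\Col_{pq}(M_G)$ is, by \autoref{Solvable}, precisely the solvability of $M_G\bm x\equiv\bm b\pmod{pq}$, i.e. the existence of a $(\mathbb{Z}_{pq},b')$-flow of $G$; transporting the inequality of \autoref{Main5} back via $F^*(G,b;\mathbb{Z}_q)=M\big(\mathcal{A}_{E(G),q}^{(M_G,\bm b)}\big)$ completes the argument.

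I do not expect a genuine obstacle here; the content is bookkeeping — aligning the notation of \autoref{Sec2} with the graph-theoretic notation of \autoref{Sec4} and checking that incidence-plus-unit-row matrices remain totally unimodular (so all invariant factors are $1$ and $\rho_C=1$). The only steps that need a little care are verifying that the general $\tilde d_J(q)$ specializes exactly to the stated $\tilde d_S(q)$ and that $q_0=q_{\bm b}$ — in particular the bookkeeping at indices with $\bar r(J)=r(J)+1$ — together with the harmless degenerate cases (loops in $G$, or $M_G\bm x=\bm b$ unsolvable).
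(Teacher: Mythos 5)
Your proposal is correct and takes essentially the same approach as the paper: the paper dispatches this corollary with a one-line pointer to the groundwork already laid and the analogous proof of \autoref{Modular-Coloring}, and your proof supplies exactly that groundwork — identifying $A=I$, $B=M_G$, using total unimodularity of $M_S$ to force all invariant factors of $C_J$ to equal $1$ (so $\rho_C=1$, $q_0=q_{\bm b}$, and $\tilde d_J$ collapses to the stated $\tilde d_S$), and then specializing \autoref{Main1}, \autoref{Char-Quasi} and \autoref{Main5} via the identification $F^*(G,b;\mathbb{Z}_q)=M\big(\mathcal{A}_{E(G),q}^{(M_G,\bm b)}\big)$.
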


The counting function $\#F^*(G,b;\mathbb{Z}_q)$ and the polynomial $\tau(G,b;t)$ have analogous properties as $\#C(G,w;\mathbb{Z}_q)$ and $\chi(G,w;t)$, respectively;  we therefore omit their repetitive analysis and further exposition.
\section*{Acknowledgements}
The work is supported by National Natural Science Foundation of China (12301424).

\end{document}